\DeclareFontFamily{OT1}{pzc}{}
\DeclareFontShape{OT1}{pzc}{m}{it}{<-> s * [1.10] pzcmi7t}{}
\DeclareMathAlphabet{\mathpzc}{OT1}{pzc}{m}{it}
\crefname{conj}{Conjecture}{Conjectures}
\crefname{cor}{Corollary}{Corollaries}
\crefname{defin}{Definition}{Definitions}
\crefname{eg}{Example}{Examples}
\crefname{lem}{Lemma}{Lemmas}
\crefname{theo}{Theorem}{Theorems}
\crefname{equation}{}{}
\crefname{enumi}{}{}
\renewcommand{\descriptionlabel}[1]{\hspace{\labelsep}(#1)}
\let\orgdescriptionlabel\descriptionlabel
\renewcommand*{\descriptionlabel}[1]{%
  \let\orglabel\label
  \let\label\@gobble
  \phantomsection
  \edef\@currentlabel{#1}%
  \let\label\orglabel
  \orgdescriptionlabel{#1}%
}
\tikzset{anchorbase/.style={>=To,baseline={([yshift=-0.5ex]current bounding box.center)}}}
\tikzset{centerbase/.style={>=To,baseline={(0,-0.1)}}}
\tikzset{wipe/.style={white,line width=4pt}}
\tikzset{->-/.style={decoration={
  markings,
  mark=at position #1 with {\arrow{>}}},postaction={decorate}}}
\tikzset{-<-/.style={decoration={
  markings,
  mark=at position #1 with {\arrow{<}}},postaction={decorate}}}
\newcommand{\braidto}{to[out=up,in=down]}
\newcommand\bubright[3]{
  \draw[->] (#1)++(0.2,0) arc(0:-360:0.2);
  \filldraw[blue] (#1)++(-0.18,.1) circle (1.5pt) node[anchor=east] {\dotlabel{#2}};
  \filldraw[fill=white, draw=red] (#1)++(-0.18,-0.1) circle (1.5pt) node[anchor=east] {{\color{red} \dotlabel{#3}}}
}
\newcommand\bubleft[3]{
  \draw[->] (#1)++(-0.2,0) arc(180:540:0.2);
  \filldraw[blue] (#1)++(0.18,-0.1) circle (1.5pt) node[anchor=west] {\dotlabel{#2}};
  \filldraw[fill=white, draw=red] (#1)++(0.18,.1) circle (1.5pt) node[anchor=west] {{\color{red} \dotlabel{#3}}}
}
\newcommand\cbubble[2]{
  \begin{tikzpicture}[anchorbase]
    \bubright{0,0}{#1}{#2};
  \end{tikzpicture}
}
\newcommand\ccbubble[2]{
  \begin{tikzpicture}[anchorbase]
    \bubleft{0,0}{#1}{#2};
  \end{tikzpicture}
}
\newcommand\dotlabel[1]{$\scriptstyle{#1}$}
\newcommand\singdot[1]{
    \filldraw[fill=white, draw=red] (#1) circle (1.5pt)
}
\newcommand\multdot[3]{
    \singdot{#2} node[anchor=#1] {\color{red} \dotlabel{#3}}
}
\newcommand\token[3]{
  \filldraw[blue] (#2) circle (1.5pt) node[anchor=#1] {\dotlabel{#3}}
}
\newcommand\teleport[2]{
  \draw[blue] (#1) to (#2);
  \filldraw[blue] (#1) circle (1.5pt);
  \filldraw[blue] (#2) circle (1.5pt)
}
\newcommand\Z{\mathbb{Z}}
\newcommand\N{\mathbb{N}}
\newcommand\kk{\Bbbk}
\newcommand\cC{\mathcal{C}}
\newcommand\cL{\mathcal{L}}
\newcommand\hcL{\hat{\mathcal{L}}}
\newcommand\cR{\mathcal{R}}
\newcommand\AWC[1][A]{\mathpzc{Wr}^\textup{aff}(#1)}    
\newcommand\AWA{\mathrm{Wr}^\textup{aff}}               
\newcommand\cW{\mathcal{W}}
\newcommand\Heis{\mathpzc{Heis}}                        
\newcommand\ba{\mathbf{a}}
\newcommand\bB{\mathbf{B}}
\newcommand\one{\mathbbm{1}}
\newcommand\rW{\mathrm{W}}              
\newcommand\diag{\textup{diag}}
\newcommand\op{\textup{op}}             
\newcommand\rev{\textup{rev}}
\newcommand\teacup{\text{cup}}
\newcommand\teacap{\text{cap}}
\newcommand\fD{\mathfrak{D}}
\newcommand\fW{\mathfrak{W}}            
\newcommand\cocenter[1]{\left\langle #1 \right\rangle}     
\DeclareMathOperator{\bub}{bub}
\DeclareMathOperator{\End}{End}
\DeclareMathOperator{\Hom}{Hom}
\DeclareMathOperator{\id}{id}
\DeclareMathOperator{\im}{im}       
\DeclareMathOperator{\Kar}{Kar}     
\DeclareMathOperator{\ord}{ord}
\DeclareMathOperator{\rank}{rank}
\DeclareMathOperator{\Span}{Span}
\DeclareMathOperator{\Sym}{Sym}
\DeclareMathOperator{\tr}{tr}
\DeclareMathOperator{\Tr}{Tr}
\newtheorem{theo}{Theorem}[section]
\newtheorem{prop}[theo]{Proposition}
\newtheorem{lem}[theo]{Lemma}
\newtheorem*{lem*}{Lemma}
\newtheorem{cor}[theo]{Corollary}
\newtheorem{conj}[theo]{Conjecture}
\theoremstyle{definition}
\newtheorem{defin}[theo]{Definition}
\newtheorem{rem}[theo]{Remark}
\newtheorem{eg}[theo]{Example}
\numberwithin{equation}{section}
  \newcommand{\acomments}[1]{
    \ \\
    {\color{red}
      \textbf{AS:} #1
    }
    \ \\
    }
  \newcommand{\mcomments}[1]{
    \ \\
    {\color{blue}
      \textbf{MR:} #1
    }
    \ \\
    }
  \newcommand{\acomments}[1]{}
  \newcommand{\mcomments}[1]{}
  \newcommand{\details}[1]{
      \ \\
      {\color{OliveGreen}
        \textbf{Details:} #1
      }
      \\
  }
  \newcommand{\details}[1]{}
\begin{document}

\title{Frobenius W-algebras and traces of Frobenius Heisenberg categories}

\author{Michael Reeks}
\address[M.R.]{
  Department of Mathematics, Bucknell Univeristy\\
  Lewisburg, PA, U.S.A.
}
\email{m.reeks@bucknell.edu}

\author{Alistair Savage}
\address[A.S.]{
  Department of Mathematics and Statistics \\
  University of Ottawa \\
  Ottawa, ON, Canada
}
\urladdr{\href{https://alistairsavage.ca}{alistairsavage.ca}, \textrm{\textit{ORCiD}:} \href{https://orcid.org/0000-0002-2859-0239}{orcid.org/0000-0002-2859-0239}}
\email{alistair.savage@uottawa.ca}


\begin{abstract}
  To each symmetric graded Frobenius superalgebra we associate a W-algebra.  We then define a linear isomorphism between the trace of the Frobenius Heisenberg category and a central reduction of this W-algebra.  We conjecture that this is an isomorphism of graded superalgebras.
\end{abstract}

\subjclass[2020]{18M05, 17B65}
\keywords{Heisenberg category, W-algebra, Frobenius algebra, trace, cocenter}

\ifboolexpr{togl{comments} or togl{details}}{%
  {\color{magenta}DETAILS OR COMMENTS ON}
}{%
}

\maketitle
\thispagestyle{empty}

\tableofcontents

\section{Introduction}

The Heisenberg category (of central charge $-1$) was first introduced by Khovanov \cite{Kho11} as a powerful tool for studying the representation theory of the symmetric group.  It was conjectured by Khovanov, and then proved in \cite{BSW-K0}, that the Grothendieck ring of Khovanov's Heisenberg category is isomorphic to the Heisenberg algebra (of central charge $-1$).  Since then, the definition of the Heisenberg category has been generalized to arbitrary central charge \cite{MS18,Bru18} and with the incorporation of a Frobenius algebra \cite{RS17,Sav19,BSW20}.  Namely, to each graded Frobenius superalgebra $A$ and central charge $k \in \Z$, one can define a \emph{Frobenius Heisenberg category} $\Heis_k(A)$.  The Grothendieck ring of $\Heis_k(A)$ was shown in \cite{BSW20} to be isomorphic to a lattice Heisenberg algebra, with lattice coming from the Grothendieck group of the category of projective modules of the Frobenius algebra.

The Grothendieck ring provides one method of decategorification of a linear monoidal category; another is the \emph{trace}, or \emph{zeroth Hochschild homology}.  The trace of Khovanov's original Heisenberg category was computed in \cite{CLLS18}, and shown to be isomorphic to a central reduction of the universal enveloping algebra of the W-algebra $\fW_{1+\infty}$, which is the unique central extension of the Lie algebra of regular differential operators on the circle.   When $A$ is the two-dimensional Clifford superalgebra, the trace of the even part of $\Heis_{-1}(A)$ was computed in \cite{OR17}, where it is shown to be isomorphic to a subalgebra of $\fW_{1+\infty}$.

The goal of the current paper is to explore the traces $\Tr(\Heis_k(A))$ of general Frobenius Heisenberg categories.  To each symmetric graded Frobenius superalgebra $A$ we associate a W-algebra $\fW(A)$.  This can be viewed as a generalization of $\fW_{1+\infty}$ in the sense that when $A$ is the ground field $\kk$, we have $\fW(\kk) \cong \fW_{1+\infty}$ (\cref{rocks}).  The general definition of $\fW(A)$ involves $A$ in a highly nontrivial way, and does not seem to have appeared in the literature before.  However, if $A$ is either semisimple or has a nontrivial positive grading, then the presentation of $\fW(A)$ becomes much simpler; see \cref{Wangmatch,kapzero}.  The algebra $\fW(A)$ contains a Heisenberg algebra depending on the cocenter of $A$ (as opposed to the lattice Heisenberg algebra appearing in the Grothendieck ring of $\Heis_k(A)$) and a ``Frobenius Virasoro algebra''.

The main result of the current paper (\cref{linisom}) is that we have a linear isomorphism
\begin{equation} \label{cougar}
    \rW(A)/(C-k) \xrightarrow{\cong} \Tr(\Heis_k(A)),
\end{equation}
where $\rW(A)$ is the universal enveloping algebra of $\fW(A)$, and $C$ is a certain central element.  The key ingredient in our proof is to use the action of $\Tr(\Heis_k(A))$ on the center of the category $\Heis_k(A)$, together with the basis theorem \cite[Th.~7.2]{BSW20} for $\Heis_k(A)$.  In particular, this action gives a new way of computing the cocenter of the degenerate affine Hecke algebra, in contrast to the methods of Solleveld \cite{Sol10}, whose work was used in \cite{CLLS18}.  See \cref{Solleveld} for details.

We conjecture that the map \cref{cougar} is an isomorphism of graded superalgebras (\cref{hope}).  When $k=-1$ and $A = \kk$, this essentially corresponds to the main result of \cite{CLLS18}.  The difficulty in proving the general conjecture is that the necessary diagrammatic computations become quite involved.  In the special case handled in \cite{CLLS18}, the authors are able to avoid most of these direct computations by utilizing the fact that $\rW(\kk)$ has a fairly small set of generators, and then computing the action of these generators on the faithful Fock space representation.  Unfortunately, these methods do not work in the general case; see \cref{special} for further discussion.  Nevertheless, we provide some evidence for the conjecture in \cref{compute}, where we compute some commutation relations in $\Tr(\Heis_k(A))$ and see that they match with those of $\fW(A)$.

When the graded Frobenius superalgebra $A$ is a \emph{zigzag algebra}, the category $\Heis_{-1}(A)$ was shown in \cite{CL12} to act on derived categories of coherent sheaves on Hilbert schemes of points on ALE spaces.  It follows that $\Tr(\Heis_k(A))$ acts on the zeroth Hochschild homology of this derived category.  Thus, it would follow from \cref{hope} that $W(A)$ acts here.  For more general values of $k$, this action should be related to the AGT correspondence for moduli spaces of instantons on resolutions of Kleinian singularities.  This potential application is one of the motivations for the current work.

Quantum versions of Heisenberg categories have been introduced in \cite{LS13,BSW-qheis}.  The trace of the category defined in \cite{LS13} was computed in \cite{CLLSS18}, where it was shown to be isomorphic to half of a central extension of a certain specialization of the elliptic Hall algebra.  We expect that the trace of the larger quantum Heisenberg category of \cite{BSW-qheis} should yield the full central extension.  (This is work in progress.)  More generally, it would be interesting to explore the trace of the quantum Frobenius Heisenberg categories introduced in \cite{BSW-qFrobHeis}, which are built from the quantum affine wreath product algebras of \cite{RS19}.  These should suggest a definition of elliptic Hall algebras depending on a Frobenius algebra.

\subsection*{Conventions}

Throughout the document, we fix a field $\kk$ of characteristic zero.  Unadorned tensor products should always be interpreted as tensor products over $\kk$, and \emph{algebras} are associative $\kk$-algebras unless otherwise specified.  The term \emph{graded} means $\Z$-graded.  We will generally consider graded superalgebras.  These are $(\Z \times \Z_2)$-graded algebras with super structure (e.g.\ in tensor products of algebras) coming from the $\Z_2$-grading.  For a homogeneous element $a$ of a graded superalgebra, we let $\deg(a) \in \Z$ denote its degree and let $\bar{a} \in \Z_2$ denote its parity.  If $\bar{a} = 0$, we say $a$ is \emph{even}; if $\bar{a}=1$, we say $a$ is \emph{odd}.  We say that the grading is \emph{positive} if the negative degree pieces of $A$ are all zero.

Recall that a superalgebra is \emph{supercommutative} if $ab = (-1)^{\bar{a} \bar{b}} ba$ for all homogeneous $a,b \in A$.  Whenever we write equations involving degrees or parities of elements, we implicitly assume that these elements are homogeneous, and we extend by linearity.  We let $\N$ denote the set of non-negative integers and let $\N_+$ denote the set of positive integers.

We will work freely in the language of string diagrams for $\kk$-linear graded monoidal supercategories.  We refer the reader to \cite{Sav-exp} for a brief overview of these concepts, to \cite[Ch.~1,2]{TV17} for a more in-depth treatment, and to \cite{BE17} for a detailed discussion of signs in the analogous super setting.

\iftoggle{detailsnote}{
\subsection*{Hidden details} For the interested reader, the tex file of the \href{https://arxiv.org/abs/2007.02732}{arXiv version} of this paper includes hidden details of some straightforward computations and arguments that are omitted in the pdf file.  These details can be displayed by switching the \texttt{details} toggle to true in the tex file and recompiling.
}{}

\subsection*{Acknowledgements}

The authors would like to thank A.~Licata, Y.~Mousaaid, R.~Muth, P.~Samuelson, and W.~Wang for helpful conversations.  A.S.\ was supported by Discovery Grant RGPIN-2017-03854 from the Natural Sciences and Engineering Research Council of Canada.

\section{Cocenters and traces}

In this section we review some basic facts about cocenters of superalgebras and traces of monoidal supercategories.  We also show how one can endow the cocenter of a \emph{Frobenius} superalgebra with the structure of a superalgebra.

\subsection{Cocenters}

For a graded superalgebra $A$, define
\[
    [A,A] = \Span_\kk \{ab - (-1)^{\bar{a} \bar{b}} ba : a,b \in A\}.
\]
The \emph{cocenter} of $A$ is
\[
    C(A) = A / [A,A].
\]
For $a \in A$, we let $\cocenter{a}$ denote its image in $C(A)$.  Note that, in general, $C(A)$ is only a graded $\kk$-supermodule, and does not have any natural multiplication.  If $f \colon A \to B$ is a homomorphism of graded algebras, we have an induced $\kk$-linear map
\[
    \cocenter{f} \colon C(A) \to C(B),\quad
    \cocenter{f}(\cocenter{a}) = \cocenter{f(a)},\quad a \in A.
\]

The \emph{opposite algebra} $A^\op$ of $A$ is the same underlying $\kk$-module as $A$, with multiplication given by
\[
  a^\op b^\op = (-1)^{\bar{a} \bar{b}} (ba)^\op,
\]
where $a^\op$ denotes the element of $A^\op$ corresponding to $a \in A$.  We then have
\[
  \left( ab - (-1)^{\bar{a} \bar{b}} ba \right)^\op
  = - \left( a^\op b^\op - (-1)^{\bar{a} \bar{b}} b^\op a^\op \right).
\]
Hence we have an isomorphism of graded $\kk$-supermodules
\begin{equation} \label{beach}
  C(A) \xrightarrow{\cong} C(A^\op),\quad
  \cocenter{a} \mapsto \cocenter{a^\op},\ a \in A.
\end{equation}

\subsection{Cocenters of Frobenius algebras}

As noted above, a priori, $C(A)$ is only a graded $\kk$-supermodule, and does not have any natural multiplication.  However, as we explain here, the cocenter of a symmetric Frobenius algebra does have a natural associative linear binary operation.  Suppose $A$ is a symmetric graded Frobenius superalgebra with trace map of degree $-2d$.  In other words, we have an even $\kk$-linear map $\tr \colon A \to \kk$ of degree $-2d$ satisfying
\[
    \tr(ab) = (-1)^{\bar{a} \bar{b}} \tr(ba),
\]
such that $\ker \tr$ contains no nonzero left ideals.  It follows immediately that we have an induced $\kk$-linear map
\[
    \tr \colon C(A) \to \kk,\quad \tr(\cocenter{a}) = \cocenter{\tr(a)},\quad a \in A.
\]

The \emph{center} of $A$ is
\[
    Z(A) := \{ a \in A : ab = (-1)^{\bar{a}\bar{b}} ba \text{ for all } b \in A\}.
\]
For $a,b \in A$ and $c \in Z(A)$, we have
\[
    c \left( ab - (-1)^{\bar{a} \bar{b})} ba \right)
    = (ca)b - (-1)^{\bar{b}(\bar{a} + \bar{c})} b(ca).
\]
Thus, we have an action of $Z(A)$ on $C(A)$ defined by
\begin{equation} \label{senate}
    c \cocenter{a} = \cocenter{a} c := \cocenter{ca} = \cocenter{ac},\quad a \in A,\ c \in Z(A).
\end{equation}

Fix a homogeneous basis $\bB_A$ of $A$.  We define the dual basis elements $a^\vee$, $a \in \bB_A$, by
\[
    \tr(a^\vee b) = \delta_{a,b},\quad a,b \in \bB_A.
\]
It is straightforward to verify that $\sum_{b \in \bB_A} b \otimes b^\vee$ is independent of the choice of basis $\bB_A$.  Furthermore, the dual of the basis $\bB_A^\vee = \{b^\vee : b \in \bB_A\}$ is given by
\begin{equation} \label{doubledual}
    (b^\vee)^\vee = (-1)^{\bar{b}} b.
\end{equation}
For all $a \in A$, we have
\begin{equation} \label{contract}
    \sum_{b \in \bB_A} \tr(b^\vee a) b = a = \sum_{b \in \bB_A} \tr(ab) b^\vee.
\end{equation}
It follows that we have the important ``teleportation'' properties
\begin{equation} \label{teleport}
    \sum_{b \in \bB_A} ab \otimes b^\vee
    = \sum_{b \in \bB_A} b \otimes b^\vee a
    \quad \text{and} \quad
    \sum_{b \in \bB_A} ba \otimes b^\vee
    = \sum_{b \in \bB_A} b \otimes ab^\vee.
\end{equation}

For $a,b \in A$ and $r \in \N$, define a binary operation $\diamond$ on $A$ by
\begin{equation} \label{diamond}
    a \diamond b
    := \sum_{c \in \bB_A} (-1)^{\bar{b} \bar{c}} a c b c^\vee.
\end{equation}
Note that this is an operation of degree $2d$.  We define
\[
    \kappa = \kappa_A
    := \sum_{a \in \bB_A} a \diamond a^\vee
    = \sum_{a,b \in \bB_A} (-1)^{\bar{a} \bar{b}} a b a^\vee b^\vee.
\]
It is straightforward to verify that $\kappa$ is independent of the chosen basis $\bB_A$, and it follows from \cref{teleport} that $\kappa \in Z(A)$.  Note that $\deg (\kappa) = 4d$.  Hence, when $A$ is nontrivially positively graded, we have $\kappa=0$ (since the top degree of $A$ is $2d$).

\begin{prop} \label{trooper}
    The binary operation
    \[
        C(A) \times C(A) \to C(A),\quad
        (\cocenter{a},\cocenter{b}) \mapsto \cocenter{a} \diamond \cocenter{b} := \cocenter{a \diamond b},
    \]
    is well defined, bilinear, associative, and commutative.
\end{prop}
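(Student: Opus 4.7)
My plan is to introduce the auxiliary map $\Psi \colon A \to A$ defined by
\[
    \Psi(b) := \sum_{c \in \bB_A} (-1)^{\bar{b}\bar{c}} c b c^\vee,
\]
so that $a \diamond b = a\,\Psi(b)$. The first step is to verify that $\Psi(b) \in Z(A)$: applying $\id \otimes L_b$ (left multiplication by $b$ on the second tensor factor) to the first identity in \cref{teleport}, namely $\sum_c ac \otimes c^\vee = \sum_c c \otimes c^\vee a$, and then multiplying, yields $\sum_c acbc^\vee = \sum_c cbc^\vee a$; multiplying through by $(-1)^{\bar{b}\bar{c}}$ gives $a\Psi(b) = \Psi(b)a$, and a parity argument (using that $\Psi(b)$ has parity $\bar{b}$) upgrades this to super-centrality.

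Bilinearity of $\diamond$ is immediate from the definition. For well-definedness in the $a$-slot, the formula $\cocenter{a \diamond b} = \cocenter{a\Psi(b)} = \cocenter{a}\,\Psi(b)$, together with the $Z(A)$-action of \cref{senate}, shows this class depends only on $\cocenter{a}$. For commutativity, the cyclic rewriting $(ac)(bc^\vee) \equiv (-1)^{(\bar{a}+\bar{c})(\bar{b}+\bar{c})}(bc^\vee)(ac) \pmod{[A,A]}$, multiplied by the $(-1)^{\bar{b}\bar{c}}$ in the definition of $\diamond$ and summed over $c$, simplifies to $a \diamond b \equiv (-1)^{\bar{a}\bar{b}}\,b \diamond a \pmod{[A,A]}$. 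Well-definedness in the $b$-slot then follows by combining commutativity with well-definedness in the $a$-slot.

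For associativity, I would factor the inner sums. Pulling $\Psi(c) = \sum_e (-1)^{\bar{c}\bar{e}} ece^\vee$ out of $(a \diamond b) \diamond c = \sum_{d,e}(-1)^{\bar{b}\bar{d} + \bar{c}\bar{e}} adbd^\vee ece^\vee$, using centrality of $\Psi(c)$ to move it past $d^\vee$, yields $(a \diamond b) \diamond c = a\Psi(b)\Psi(c)$. Analogously, $a \diamond (b \diamond c) = a\,\Psi(b\Psi(c))$; using the super-commutation $\Psi(c) d^\vee = (-1)^{\bar{c}\bar{d}} d^\vee \Psi(c)$ to factor $\Psi(c)$ out of the inner $d$-sum in $\Psi(b\Psi(c))$, the resulting $(-1)^{\bar{c}\bar{d}}$ sign exactly cancels the mismatch between $(-1)^{(\bar{b}+\bar{c})\bar{d}}$ and $(-1)^{\bar{b}\bar{d}}$, producing $\Psi(b)\Psi(c)$ and hence associativity.

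The main technical obstacle is the super-sign bookkeeping, and in particular the fact that the direct teleportation argument gives only strict commutation $a\Psi(b) = \Psi(b) a$, which must be sharpened to super-centrality for the signs in the associativity step to align. The sharpening uses parity of $\Psi(b)$ together with the symmetric-Frobenius structure; alternatively, one can bypass super-centrality and instead use \cref{senate} a second time to absorb the residual sign discrepancies into $[A,A]$, so that associativity is proved modulo $[A,A]$ rather than on the nose in $A$.
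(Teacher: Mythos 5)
Your overall architecture is sound and is largely a repackaging of the paper's argument: both proofs rest on the same two ingredients, the teleportation identity \cref{teleport} and cyclicity of the trace, and your commutativity computation coincides with the paper's. Writing $a \diamond b = a\Psi(b)$ with $\Psi(b) = \sum_{c}(-1)^{\bar b\bar c}cbc^\vee$ and first proving $\Psi(b) \in Z(A)$ is a clean way to organize things: it makes well-definedness in the first slot an immediate consequence of the $Z(A)$-action \cref{senate}, and it gives associativity on the nose in $A$ (via $\Psi(b\Psi(c)) = \Psi(b)\Psi(c)$) rather than only in the cocenter, which is slightly stronger than what the paper records. The paper instead verifies each property by a direct computation in $C(A)$, teleporting $e^\vee$ past $fcf^\vee$ in the associativity step --- which is the same centrality of $\Psi(c)$ in disguise.

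The one genuine gap is in your derivation of $\Psi(b) \in Z(A)$. Applying the map $x\otimes y\mapsto xby$ to \cref{teleport} does give $\sum_c acbc^\vee = \sum_c cbc^\vee a$, but you cannot then ``multiply through by $(-1)^{\bar b\bar c}$'': that sign depends on the summation index $c$, and the two sides of the identity do not match up term by term over $c$. Worse, the unsigned conclusion $a\Psi(b) = \Psi(b)a$ for all $a$ is false when $b$ is odd, and no parity argument can upgrade it to $a\Psi(b) = (-1)^{\bar a\bar b}\Psi(b)a$: for $a$ and $b$ both odd the two statements together would force $a\Psi(b)=0$. The correct route is to refine \cref{teleport} by parity. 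Since $\tr$ is even, the computation behind \cref{contract} shows that for homogeneous $a$ and each $\epsilon \in \Z_2$ one has
\[
    \sum_{c \in \bB_A,\ \bar c = \epsilon} ac\otimes c^\vee \;=\; \sum_{c\in\bB_A,\ \bar c = \epsilon + \bar a} c \otimes c^\vee a,
\]
i.e.\ teleporting past an odd $a$ flips the parity of the basis label. Multiplying by $(-1)^{\bar b\epsilon}$, summing over $\epsilon$, and applying $x\otimes y\mapsto xby$ yields $a\Psi(b) = (-1)^{\bar a\bar b}\Psi(b)a$ directly. With that lemma established, the rest of your proof goes through as written; your fallback of pushing residual signs into $[A,A]$ is then unnecessary.
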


\begin{proof}
    For $a,a',b,b' \in A$, we have
    \begin{multline*}
        \cocenter{(aa') \diamond b}
        = \sum_{c \in \bB_A} (-1)^{\bar{b}\bar{c}} \cocenter{aa' c b c^\vee}
        \stackrel{\cref{contract}}{=} \sum_{c,e \in \bB_A} (-1)^{\bar{b}\bar{c}} \tr(e^\vee a'c) \cocenter{aebc^\vee}
        = \sum_{c,e \in \bB_A} (-1)^{\bar{b}\bar{e} + \bar{b}\bar{a'}} \tr(e^\vee a'c) \cocenter{aebc^\vee}
        \\
        \stackrel{\cref{contract}}{=} \sum_{e \in \bB_A} (-1)^{\bar{b}\bar{e} + \bar{b}\bar{a'}}  \cocenter{aebe^\vee a'}
        = \sum_{e \in \bB_A} (-1)^{\bar{b}\bar{e} + \bar{a}\bar{a'}} \cocenter{a'aebe^\vee}
        = (-1)^{\bar{a} \bar{a'}} \cocenter{(a'a) \diamond b},
    \end{multline*}
    where, in the third equality, we used the fact that $\tr(aebc^\vee)=0$ unless $\bar{c}=\bar{e}+\bar{a'}$.  Similarly,
    \[
        \cocenter{a \diamond (bb')}
        = \sum_{c \in \bB_A} (-1)^{(\bar{b} + \bar{b'})\bar{c}} \cocenter{a c b b' c^\vee}
        = \sum_{c \in \bB_A} (-1)^{(\bar{b} + \bar{b'})\bar{c} + \bar{b}\bar{b'}} \cocenter{a c b' b c^\vee}
        = (-1)^{\bar{b} \bar{b'}} \cocenter{a \diamond (b'b)}.
    \]
    Thus the operation is well defined.  It is clear that it is bilinear.  For $a,b,c \in A$, we have
    \[
        \cocenter{(a \diamond b) \diamond c}
        = \sum_{e,f \in \bB_A} (-1)^{\bar{b}\bar{e} + \bar{c} \bar{f}} \cocenter{a e b e^\vee f c f^\vee}
        = \sum_{e,f \in \bB_A} (-1)^{\bar{c}\bar{f} + \bar{e}(\bar{b} + \bar{c})} \cocenter{a e b f c f^\vee e^\vee}
        = \cocenter{a \diamond (b \diamond c)}.
    \]
    Hence the operation is associative.  Finally,
    \[
        \cocenter{a \diamond b}
        = \sum_{c \in \bB_A} (-1)^{\bar{b}\bar{c}} \cocenter{acbc^\vee}
        = \sum_{c \in \bB_A} (-1)^{\bar{a} (\bar{b} + \bar{c}) + \bar{c}} \cocenter{bc^\vee ac}
        \stackrel{\cref{doubledual}}{=} \sum_{c \in \bB_A} (-1)^{\bar{a} (\bar{b} +\bar{c})} \cocenter{bcac^\vee}
        = (-1)^{\bar{a} \bar{b}} \cocenter{b \diamond a}.
    \]
    Thus the operation is supercommutative.
\end{proof}

\begin{rem} \label{teeth}
    In general, $A$ is not a unital algebra under the operation $\diamond$ since it may not possess a unit element for the multiplication.  For example, if $d > 0$, then $\diamond$ cannot possess a unit element since it is a binary operation of degree $2d$.
\end{rem}

\begin{rem}[Change of trace map] \label{ifislip}
    If $(A,\tr_1)$ and $(A,\tr_2)$ are symmetric graded Frobenius superalgebras with the same underlying graded superalgebra $A$, then there exists an even, degree zero, invertible element $e \in Z(A)$ such that
    \begin{equation} \label{slip}
        \tr_1(a) = \tr_2(ea) \quad \text{for all } a \in A.
    \end{equation}
    If $\{b^{\vee,1} : b \in \bB_A\}$ and $\{b^{\vee,2} : b \in \bB_A\}$ denote the dual bases of $\bB_A$ with respect to $\tr_1$ and $\tr_2$, respectively, then $b^{\vee,2} = e b^{\vee,1}$ for $b \in \bB_A$.  Using subscripts to indicate the trace map in question, we then have
    \begin{equation} \label{slippin}
        a \diamond_2 b = e a \diamond_1 b,\quad a, b \in A,\qquad
        \kappa_2 = e^2 \kappa_1.
    \end{equation}
    Hence we have an isomorphism of graded associative (not necessarily unital) superalgebras
    \[
        (C(A), \diamond_1) \to (C(A), \diamond_2),\quad
        a \mapsto e^{-1} a,\ a \in A.
    \]
\end{rem}

\subsection{Examples}

We now consider some examples of symmetric graded Frobenius superalgebras that are of particular interest.  First note that if $A$ and $B$ are two symmetric graded Frobenius superalgebras with trace maps $\tr_A$ and $\tr_B$, then their direct product $A \times B$ is also a symmetric graded Frobenius superalgebra with trace map $(a,b) \mapsto \tr_A(a) + \tr_B(b)$, $a \in A$, $b \in B$.  Then $C(A \times B) \cong C(A) \times C(B)$ and $\kappa_{A \times B} = (\kappa_A,\kappa_B)$.

\begin{eg}[Matrix algebras] \label{matrix}
    For $n \in \N_+$, the algebra $M_n(\kk)$ of $n \times n$ matrices with entries in $\kk$ (with trivial grading) is a symmetric Frobenius algebra with the usual trace map.  It is straightforward to verify that $C(M_n(\kk)) \cong \kk$ as algebras and $\kappa = 1$.
\end{eg}

\begin{eg}[Semisimple algebras] \label{semisimple}
    If $\kk$ is algebraically closed, then any semisimple algebra is isomorphic to a product of matrix algebras by Wedderburn's Structure Theorem.  Hence if $A$ is a symmetric Frobenius algebra whose underlying algebra is semisimple, we have $C(A) \cong \kk^n$ for some $n$.  Again $\kappa$ is the identity element of $\kk^n$.
\end{eg}

\begin{eg}[Group algebras] \label{groupalg}
    Suppose $\Gamma$ is a finite group, and let $A = \kk \Gamma$ be its group algebra (with trivial grading).  This is a symmetric Frobenius algebra with trace map given by $\tr(\sum_{g \in \Gamma} \alpha_g g) = \alpha_{1_\Gamma}$.  Furthermore, $\frac{1}{|\Gamma|} \cocenter{1_\Gamma}$ is a multiplicative unit for the operation $\diamond$, and so $C(A)$ is a commutative unital algebra.  Let $R(\Gamma)$ denote the space of class functions on $\Gamma$.  The standard bilinear form on $R(\Gamma)$ is given by
    \[
        \langle f, g \rangle
        = \frac{1}{|\Gamma|} \sum_{y \in \Gamma} f(y) g(y^{-1}),\quad f,g \in R(\Gamma).
    \]
    The convolution product on $R(\Gamma)$ is given by
    \[
        (f \circ g)(z) = \sum_{y \in \Gamma} f(zy^{-1}) g(y),\quad  f,g \in R(\Gamma),\ z \in \Gamma.
    \]
    The map
    \[
        \varphi \colon C(A) \to R(\Gamma),\quad
        \cocenter{a} \mapsto \big(z \mapsto \tr(a \diamond z) \big),
    \]
    is an isomorphism of algebras and
    \[
        \langle \varphi(\alpha), \varphi(\alpha) \rangle
        = \tr(\alpha \diamond \beta),\quad
        \alpha, \beta \in C(A).
    \]
    \details{
        It is straightforward to verify that $\varphi$ is an isomorphism of vector spaces.  Now, for $a,b \in A$ and $z \in \Gamma$, we have
        \begin{align*}
            (\varphi(a) \circ \varphi(b))(z)
            &= \sum_{y \in \Gamma} \varphi(a)(zy^{-1}) \varphi(b)(y) \\
            &= \sum_{y \in \Gamma} \tr \left( a \diamond (zy^{-1}) \right) \tr(b \diamond y) \\
            &= \sum_{y,z \in \Gamma} \tr(awzy^{-1}w^{-1}) \tr(b \diamond y) \\
            &= \sum_{y,z \in \Gamma} \tr(w^{-1}awzy^{-1}) \tr(b \diamond y) \\
            &\stackrel{\mathclap{\cref{contract}}}{=}\ \sum_{z \in \Gamma} \tr \left( b \diamond (w^{-1} awz) \right) \\
            &= \sum_{z \in \Gamma} \tr \left( b \diamond (z w^{-1} a w) \right) \\
            &= \tr(b \diamond z \diamond w) \\
            &= \tr(z \diamond a \diamond b) \\
            &= \varphi(a \diamond b)(z).
        \end{align*}
        So $\varphi$ is an isomorphism of algebras.  We also have, using the above,
        \[
            \langle \varphi(\alpha), \varphi(\beta) \rangle
            = \frac{1}{|\Gamma|} (\varphi(\alpha) \circ \varphi(\beta))(1_\Gamma)
            = \frac{1}{|\Gamma|} \tr(1_\Gamma \diamond a \diamond b)
            = \tr(a \diamond b).
        \]
    }
    We have
    \[
        \kappa = \sum_{g,h \in \Gamma} ghg^{-1}h^{-1}.
    \]
    On the other hand, by Maschke's Theorem, $\kk[\Gamma]$ is semisimple.  Thus, if $\kk$ is algebraically closed, we can choose a trace map as in \cref{matrix,semisimple} so that $\kappa=1$.
\end{eg}

\begin{eg}[Zigzag algebras] \label{zigzag}
    Let $\Gamma = (\Gamma_0,\Gamma_1)$ be a connected graph without loops or multiple edges, and let $D \Gamma$ be the doubled directed graph, which has the same set of vertices as $\Gamma$, and, for each edge $\{i,j\} \in \Gamma_1$, directed edges $a_{i,j}$ from $j$ to $i$ and $a_{j,i}$ from $i$ to $j$.  We fix a collection of \emph{orientation coefficients} $\epsilon_{i,j} \in \kk$, $\{i,j\} \in \Gamma_0$, satisfying $\epsilon_{i,j} = 0$ iff $\{i,j\} \notin \Gamma_1$ and $\epsilon_{i,j} = -\epsilon_{j,i}$ for all $i,j \in \Gamma_0$.

    If $\Gamma$ has at least $2$ vertices, let $A(\Gamma)$ denote the quotient of the path algebra of $D \Gamma$ modulo the following relations:
    \begin{itemize}
        \item all paths of length three or greater are zero;
        \item all paths of length two that are not cycles are zero;
        \item $\epsilon_{i,j} a_{i,j} a_{j,i} = \epsilon_{i,l} a_{i,l} a_{l,i}$ for all $\{i,j\}, \{i,l\} \in \Gamma_1$.
    \end{itemize}
    The algebra $A(\Gamma)$ is a \emph{skew-zigzag algebra}; see \cite[\S4.6]{HK01} and \cite[\S5]{Cou16}.  (In certain cases, e.g.\ if $\Gamma$ is bipartite, then $A(\Gamma)$ is isomorphic to a \emph{zigzag algebra}.)

    The algebra $A(\Gamma)$ is generated by the length zero paths $e_i$, $i \in \Gamma_0$, and the length one paths $a_{i,j}$, $\{i,j\} \in \Gamma_1$.  For $i \in \Gamma_i$, define
    \[
        c_i = \epsilon_{i,j} a_{i,j} a_{j,i}
        \quad \text{for some $j$ such that } \{i,j\} \in \Gamma_1.
    \]
    The defining relations of $A(\Gamma)$ imply that $c_i$ is independent of the choice $j$.  It is easily verified (see, for example, \cite[Prop.~3.7]{Cou16}) that $A(\Gamma)$ has basis
    \[
        \{a_{i,j} : \{i,j\} \in \Gamma_1\}
        \cup \{e_i : \ i\in \Gamma_0\}
        \cup \{c_i : i\in \Gamma_0\}.
    \]
    The algebra $A(\Gamma)$ is graded by path length, so that $\deg(e_i) = 0$, $\deg(a_{i,j}) = 1$, and $\deg(c_i) = 2$.  We give it the $\Z_2$-grading induced by this $\Z$-grading.  Then $A(\Gamma)$ is a symmetric Frobenius superalgebra, with even trace map $\tr$ of degree $-2$ satisfying
    \[
        \tr(e_i) = \tr(a_{j,l}) = 0,\
        \tr(c_i) = 1, \quad
        i \in \Gamma_0,\ \{j,l\} \in \Gamma_1.
    \]
    It follows that the duals of the basis elements are given by
    \[
        e_i^\vee = c_i, \quad
        a_{i,j}^\vee = \epsilon_{i,j} a_{j,i}.
    \]
    As in \cref{teeth}, $C(A(\Gamma))$ does not have a unit with respect to the $\diamond$ operation, and $\kappa = 0$.

    We can explicitly compute the commutators of the basis elements:
    \begin{gather*}
        a_{i,j} a_{l,m} + a_{l,m} a_{i,j}
        = \delta_{j,l} \delta_{m,i} \epsilon_{i,j} (c_i - c_j),\quad
        a_{i,j} e_l - e_l a_{i,j}
        = \delta_{j,l} a_{i,j}  - \delta_{i,l} a_{i,j},
        \\
        e_i e_j - e_j e_i = c_i c_j - c_j c_i = e_i c_j - c_j e_i = a_{i,j}, c_l - c_l a_{i,j} = 0.
    \end{gather*}
    It follows that $C(A(\Gamma))$ has basis $\{\cocenter{c},\cocenter{e_i} : i \in \Gamma_0\}$, where $c=c_i$ for any $i \in \Gamma_0$.  (The class $\cocenter{c_i}$ is independent of $i$.)  For all $i,j \in \Gamma_0$, we have
    \[
        \cocenter{c} \diamond \cocenter{e_i} = 0,
        \qquad
        \cocenter{e_i} \diamond \cocenter{e_j} =
        \begin{cases}
            \cocenter{c} & \{i,j\} \in \Gamma_1, \\
            2\cocenter{c} & i=j, \\
            0 & \text{otherwise}.
        \end{cases}
    \]
    Indeed, the first relation follows from the fact that $\diamond $ is homogeneous of degree 2, and any element of degree 3 or greater is 0 in $A(\Gamma)$, while the second relation follows from the computation
    \begin{align*}
        \cocenter{e_i \diamond e_j}
        &= \sum_{\{l,m\} \in \Gamma_1} \cocenter{ \epsilon_{l,m} e_i a_{l,m} e_j a_{m,l} + \epsilon_{m,l} e_i a_{m,l} e_j a_{l,m} }
        + \sum_{l \in \Gamma_0} \cocenter{ e_i e_l e_j c_l + e_i c_l e_j e_l }
        \\
        &= \sum_{\{l,m\} \in \Gamma_1} \cocenter{ \delta_{i,l} \delta_{j,m} c + \delta_{i,m} \delta_{j,l} c }
        + 2 \sum_{l \in \Gamma_0} \cocenter{\delta_{i,l} \delta_{j,l} c}.
    \end{align*}

    If $\Gamma$ has a single vertex, we define $A(\Gamma) = \kk[c]/(c^2)$ with $c$ even of degree $2$ and $\tr(c) = 1$, $\tr(1) = 0$.  Then $A(\Gamma)$ is commutative, hence $C(A(\Gamma)) = A(\Gamma) = \kk[c]/(c^2)$ as $\kk$-modules; we have $1 \diamond 1 = 2c$ and $c \diamond c = c \diamond 1 = 0$.
\end{eg}

\subsection{Cocenters of locally unital algebras\label{coextro}}

A \emph{locally unital graded superalgebra} is a graded associative (but not necessarily unital) $\kk$-superalgebra $R$ equipped with a system of mutually orthogonal idempotents $\{1_X : X \in \mathbf{R}\}$ such that
\[
    R = \bigoplus_{X,Y \in \mathbf{R}} 1_Y R 1_X.
\]
We define
\[
    R_\diag := \bigoplus_{X \in \mathbf{R}} 1_X R 1_X,\qquad
    [R,R]_\diag = \sum_{X,Y \in \mathbf{R}} [1_X R 1_Y, 1_Y R 1_X].
\]

\begin{lem} \label{sled}
    If $R$ is a locally unital graded superalgebra, then the inclusion $R_\diag \hookrightarrow R$ induces an isomorphism of graded $\kk$-supermodules
    \[
        R_\diag / [R,R]_\diag \cong C(R).
    \]
\end{lem}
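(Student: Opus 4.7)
The plan is to construct an inverse to the map $\iota \colon R_\diag / [R,R]_\diag \to C(R)$ that is induced by the inclusion $R_\diag \hookrightarrow R$ followed by the quotient $R \twoheadrightarrow C(R)$. This $\iota$ is well defined since $[R,R]_\diag \subseteq [R,R]$ by definition. For the candidate inverse, I would define a projection $\pi \colon R \to R_\diag$ by $\pi(r) = \sum_{X \in \mathbf{R}} 1_X r 1_X$, which makes sense because every $r \in R$ meets only finitely many of the summands $1_Y R 1_X$.

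The key step is to show that the composite $R \xrightarrow{\pi} R_\diag \twoheadrightarrow R_\diag/[R,R]_\diag$ annihilates $[R,R]$, and hence descends to a map $\bar\pi \colon C(R) \to R_\diag/[R,R]_\diag$. For homogeneous $a,b \in R$ I would compute
\[
    \pi(ab) = \sum_{X,Y \in \mathbf{R}} (1_Y a 1_X)(1_X b 1_Y),
\]
and note that the idempotents $1_X$ are even of degree zero (standard for a locally unital graded superalgebra), so each term $(1_Y a 1_X)(1_X b 1_Y)$ lies in $1_Y R 1_Y \subseteq R_\diag$ and the supercommutator
\[
    (1_Y a 1_X)(1_X b 1_Y) - (-1)^{\bar{a}\bar{b}} (1_X b 1_Y)(1_Y a 1_X)
\]
lies in $[1_Y R 1_X, 1_X R 1_Y] \subseteq [R,R]_\diag$. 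Swapping the summation indices $X \leftrightarrow Y$ then yields $\pi(ab) \equiv (-1)^{\bar{a}\bar{b}} \pi(ba) \pmod{[R,R]_\diag}$, as required.

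To finish, I would verify that $\iota$ and $\bar\pi$ are mutual inverses. The composite $\bar\pi \circ \iota$ is the identity because $\pi$ restricts to the identity on $R_\diag$. For $\iota \circ \bar\pi$, it is enough to show $r \equiv \pi(r) \pmod{[R,R]}$ for every $r \in R$, or equivalently that $1_Y r 1_X \in [R,R]$ whenever $X \neq Y$; but this is immediate from
\[
    1_Y r 1_X = 1_Y (1_Y r 1_X) - (1_Y r 1_X) 1_Y,
\]
since the second term vanishes by orthogonality of the idempotents. I do not expect any serious obstacle: the only thing requiring care is the index swap in the middle step, which hinges on the parity of $1_Y a 1_X$ agreeing with that of $a$, and this is automatic once the idempotents are taken to be even and degree zero.
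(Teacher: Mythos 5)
Your proof is correct and rests on exactly the same two observations as the paper's: that $1_Y r 1_X = [1_Y, 1_Y r 1_X] \in [R,R]$ for $X \ne Y$, and that the diagonal components of a supercommutator $ab - (-1)^{\bar a \bar b} ba$ pair up into elements of $[1_Y R 1_X, 1_X R 1_Y] \subseteq [R,R]_\diag$. You package this as the construction of an explicit inverse via the projection $\pi(r) = \sum_X 1_X r 1_X$, whereas the paper states it as the decomposition $[R,R] = [R,R]_\diag \oplus \bigoplus_{X \ne Y} 1_X R 1_Y$; the content is the same.
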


\begin{proof}
    If $X,Y \in \mathbf{R}$ with $X \ne Y$, then $1_X R 1_Y  = [1_X R 1_Y, 1_Y] \subseteq [1_X R 1_Y, 1_Y R 1_Y]$.  Thus $[R,R] =  [R,R]_\diag \oplus \bigoplus_{X \ne Y} 1_X R 1_Y$, and the result follows.
\end{proof}

For the remainder of this section, we assume that $R$ is a locally unital graded superalgebra with a system of mutually orthogonal idempotents $\{1_n : n \in \N\}$.  We also assume that we have subsets $D_{m,n} \subseteq 1_m R 1_n$, $m,n \in \N$, and $D_n \subseteq 1_n R 1_n$, $n \in \N$, whose elements are homogeneous, with the following properties:
\begin{description}
    \item[B1\label{B1}] We have $D_{n,n} = \{1_n\}$ for all $n \in \N$, and $\bB_{m,n} := \bigsqcup_{l=0}^{\min(m,n)} D_{m,l} D_l D_{l,n}$ is a basis of $1_m R 1_n$ for each $m,n \in \N$.  (Part of our assumption here is that the sets $D_{m,l} D_l D_{l,n}$ are disjoint.)
    \item[B2\label{B2}] For all $n \in \N$, $R_n := \Span_\kk D_n$ is a subalgebra of $1_n R 1_n$.
\end{description}
For $m,n \in \N$, define $\bB_{m,n}^< := \bigsqcup_{l=0}^{\min(m,n)-1} D_{m,l} D_l D_{l,n}$ and let
\[
    \bB_{m,n}'
    := \bB_{m,n} \setminus \bB_{m,n}^<
    =
    \begin{cases}
        D_m D_{m,n} & \text{if } m < n, \\
        D_{m,n} D_n & \text{if } m > n, \\
        D_n & \text{if } m=n.
    \end{cases}
\]
In other words, $\bB_{m,n}^<$ is the subset of $\bB_{m,n}$ consisting of elements that factor through $1_l$ for some $l < \min(m,n)$.  It follows from the above that we have a decomposition of $\kk$-modules
\begin{equation} \label{mouse}
    1_n R 1_n = R_n \oplus I_n,\quad
    I_n := \sum_{m < n} 1_n R 1_m R 1_n = \Span_\kk \bB_{n,n}^<.
\end{equation}
Note that $I_n$ is an ideal of $1_n R 1_n$.

\begin{lem} \label{ginger}
    With the above assumptions and notation, we have
    \begin{equation} \label{nigiri}
        [R,R]_\diag
        = \bigoplus_{n \in \N} [R_n, R_n] + \sum_{m < n} \Span_\kk [D_m D_{m,n}, D_{n,m}].
    \end{equation}
\end{lem}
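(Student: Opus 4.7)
Denote the right-hand side by $L$. The inclusion $L \subseteq [R,R]_\diag$ is immediate, since each generator of $L$ lies in some $[1_m R 1_n, 1_n R 1_m]$. For the reverse inclusion, by bilinearity and the super-antisymmetry $[a,b] = -(-1)^{\bar a \bar b}[b,a]$ it suffices to show that $[1_m R 1_n, 1_n R 1_m] \subseteq L$ for all $m \le n$. I will argue by induction on $n$, handling $m < n$ first (via a sub-induction on $m$) and then using this to treat the diagonal case $m = n$. The engine throughout is the following super-Leibniz identity: whenever $x = pq$ with $p \in 1_m R 1_l$, $q \in 1_l R 1_n$, and $y \in 1_n R 1_m$, a short sign calculation in which the a priori middle contributions $qyp \in 1_l R 1_l$ cancel yields
\[
    [x,y] \;=\; [p,\,qy] \;-\; (-1)^{(\bar p+\bar q)\bar y}\,[yp,\,q].
\]

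For the subcase $m < n$, expand $x$ and $y$ in the basis provided by \ref{B1}. If a basis summand of $x$ factors through some level $l < m$, write it as $(\alpha\beta)\gamma$ with $\alpha\beta \in 1_m R 1_l$ and $\gamma \in 1_l R 1_n$, and apply the Leibniz identity: the first term $[\alpha\beta,\gamma y] \in [1_m R 1_l, 1_l R 1_m]$ has maximum index $m < n$ and is absorbed by the outer induction hypothesis, while $[y\alpha\beta,\gamma] \in [1_n R 1_l, 1_l R 1_n]$ has $l < m$ and is absorbed by the inner induction hypothesis (combined with antisymmetry). A symmetric argument handles the case where $y$ factors through a lower level. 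If instead both basis summands are at the top level, then $x = \beta\gamma$ and $y = \alpha'\beta'$ with $\beta, \beta' \in D_m$, $\gamma \in D_{m,n}$, $\alpha' \in D_{n,m}$. Since $R_m$ is a subalgebra by \ref{B2}, I expand $\beta'\beta = \sum_i c_i\,\delta_i$ with $\delta_i \in D_m$. Each $[\delta_i\gamma, \alpha'] \in \Span_\kk[D_m D_{m,n}, D_{n,m}]$ is already a generator of $L$, and summing these (after a short sign check) gives the key congruence
\[
    [\beta\gamma,\, \alpha'\beta'] \;\equiv\; [\beta\gamma\alpha',\, \beta'] \pmod L,
\]
whose right-hand side lies in $[1_m R 1_m, 1_m R 1_m] \subseteq L$ by the outer induction hypothesis (since $m < n$).

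For the diagonal case $m = n$, decompose $1_n R 1_n = R_n \oplus I_n$ using \eqref{mouse}. Brackets inside $R_n$ contribute $[R_n, R_n] \subseteq L$ directly. Every element of $I_n$ is a sum of terms $uvw$ with $u \in D_{n,l}$, $v \in D_l$, $w \in D_{l,n}$, $l < n$; applying the Leibniz identity with $p = u$ and $q = vw$ rewrites $[uvw, y]$, for any $y \in 1_n R 1_n$, as a combination of brackets in $[1_n R 1_l, 1_l R 1_n]$ with $l < n$. All of these lie in $L$ by the $m < n$ case already established within the same inductive step for $n$.

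The main obstacle is not conceptual but combinatorial: carefully tracking the parities to verify that the $qyp$ cancellation in the Leibniz identity goes through, and, most delicately, that the top-level congruence $[\beta\gamma, \alpha'\beta'] \equiv [\beta\gamma\alpha', \beta']$ holds on the nose modulo $L$ once all signs arising from reordering the factors $\alpha', \beta, \beta', \gamma$ are collected.
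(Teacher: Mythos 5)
Your proof is correct and follows essentially the same route as the paper's: the same super-Leibniz reassociation identity $[pq,y]=[p,qy]\pm[yp,q]$, the same nested induction on the levels $n$ and $m$, and the same use of \eqref{B2} to absorb $\beta'\beta$ into $\Span_\kk D_m$ in the top-level step. The only difference is organizational — the paper first reduces every diagonal commutator to the span of brackets of top-level basis elements and then pushes that span into the right-hand side, whereas you run a single nested induction directly (handling $m=n$ via the decomposition $1_nR1_n=R_n\oplus I_n$); the two arrangements are interchangeable.
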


\begin{proof}
    For $q \in \N$, let $V_q := \sum_{m \le n \le q} \Span_\kk [\bB_{m,n}', \bB_{n,m}']$.  We first show that, for all $m \le n$, we have
    \begin{equation} \label{sushi}
        [1_m R 1_n, 1_n R 1_m] \subseteq V_n
        \quad \text{or, equivalently,} \quad
        [\bB_{m,n}, \bB_{n,m}^<] \subseteq V_n
        \text{ and } [\bB_{m,n}^<, \bB_{n,m}] \subseteq V_n.
    \end{equation}
    We prove \cref{sushi} by induction on $n$.  The case $n=0$ is immediate, since $\bB_{0,0}^< = \varnothing$.  Fix $N \ge 1$ and suppose \cref{sushi} holds for $m \le n \le N-1$.  We will show it holds for $m \le n=N$ by induction on $m$.  The case $m=0$ is straightforward since $\bB_{N,0}^< = \bB_{0,N}^< = \varnothing$.  Now suppose that $1 < M \le N$ and that \cref{sushi} holds for all $m < M$ and $n=N$.  By definition, an arbitrary element of $\bB_{N,M}^<$ can be written in the form $gh$ for $g \in 1_N R 1_l$, $h \in 1_l R 1_M$, $l < M$.  Then, for $f \in \bB_{M,N}$, we have
    \[
        [f,gh]
        = [fg,h] + [hf,g]
        \in [1_M R 1_l, 1_l R 1_M] + [1_l R 1_N, 1_N R 1_l]
        \subseteq V_N,
    \]
    where the final inclusion follows from the induction hypotheses.  This proves that $[\bB_{M,N}, \bB_{N,M}^<] \subseteq V_N$.  The proof that $[\bB_{M,N}^<, \bB_{N,M}] \subseteq V_N$ is analogous.  This completes the proof of \cref{sushi}.

    Now let $W$ be the right-hand side of \cref{nigiri}.  For all $m \le n$, we must show that $[1_m R 1_n, 1_n R 1_m] \subseteq W$.  By \cref{sushi}, it suffices to show that $V_n \subseteq W$ for all $n \in \N$.  We prove this by induction on $n$.  The case $n=0$ is immediate, since $V_0 = [R_0,R_0]$.  Suppose that $N \ge 1$ and that $V_n \subseteq W$ for all $n < N$.  Since $V_N = V_{N-1} + \sum_{m \le N} \Span_\kk [\bB_{m,N}', \bB_{N,m}']$, we must show that $[\bB_{m,N}', \bB_{N,m}'] \subseteq W$ for all $m \le N$.

    First suppose that $m < N$.  Then $[\bB_{m,N}', \bB_{N,m}'] = [D_m D_{m,N}, D_{N,m} D_m]$.  Now, for $f \in D_m D_{m,N}$, $g \in D_{N,m}$, $h \in D_m$, we have
    \[
        [f,gh]
        = [fg,h] + [hf,g]
        \in [1_m R 1_m, R_m] + \Span_\kk [D_m D_{m,N}, D_{N,m}]
        \overset{\cref{sushi}}{\subseteq} V_m + W
        \subseteq W,
    \]
    where we have used \eqref{B2} to conclude that $hf \in \Span_\kk D_m D_{m,n}$, together with the induction hypothesis.  Thus $[\bB_{m,N}', \bB_{m,N}'] \subseteq W$ when $m < N$.  On the other hand, we have $[\bB_{N,N}', \bB_{N,N}'] = [D_N, D_N] \subseteq W$.  This completes the proof of the inductive step.
\end{proof}

\begin{lem} \label{crab}
    We have
    \begin{equation} \label{wasabi}
        \left( \bigoplus_{n \in \N} R_n \right)
        \cap \left( \sum_{m < n} \Span_\kk [D_m D_{m,n}, D_{n,m}] \right)
        = 0.
    \end{equation}
\end{lem}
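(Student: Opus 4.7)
The plan is to argue by induction on the maximum upper index $N = \max_i n_i$ appearing in a presentation $x = \sum_i \alpha_i [f_i, g_i]$ of an element $x$ of the intersection, where $f_i \in D_{m_i} D_{m_i, n_i}$, $g_i \in D_{n_i, m_i}$, and $m_i < n_i$. If the sum is empty, then $x = 0$, providing the base case.

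The key observation driving the induction is that the ``wrong half'' of each commutator lies automatically in the ideal part. Writing $f_i = d_{m_i} d_{m_i, n_i}$ with $d_{m_i} \in D_{m_i}$ and $d_{m_i, n_i} \in D_{m_i, n_i}$, the product
\[
    g_i f_i = g_i \cdot d_{m_i} \cdot d_{m_i, n_i} \in D_{n_i, m_i} D_{m_i} D_{m_i, n_i} \subseteq \bB_{n_i, n_i}^< \subseteq I_{n_i},
\]
since $m_i < n_i$.

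For the inductive step, project both sides of $x = \sum_i \alpha_i [f_i, g_i]$ onto the block $1_N R 1_N$. The left side contributes $x_N \in R_N$. Decomposing $[f_i, g_i] = f_i g_i - (-1)^{\bar{f_i} \bar{g_i}} g_i f_i \in 1_{m_i} R 1_{m_i} \oplus 1_{n_i} R 1_{n_i}$, the constraint $m_i < n_i \le N$ rules out $m_i = N$, so contributions to $1_N R 1_N$ come only from indices $i$ with $n_i = N$, producing
\[
    -\sum_{i : n_i = N} (-1)^{\bar{f_i} \bar{g_i}} \alpha_i \, g_i f_i \in I_N.
\]
By the direct sum decomposition $1_N R 1_N = R_N \oplus I_N$ of \cref{mouse}, both $x_N = 0$ and the displayed sum vanishes inside $I_N$.

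Now, the basis subsets $D_{N, m} D_m D_{m, N}$ of $\bB_{N, N}^<$ are pairwise disjoint across $m < N$ by \eqref{B1}, and within each such subset distinct triples $(g, d_m, d_{m, N})$ parametrize distinct basis elements. Hence linear independence of $\bB_{N, N}$ in the vanishing sum forces $\alpha_i = 0$ for every $i$ with $n_i = N$. What remains is $x = \sum_{i : n_i < N} \alpha_i [f_i, g_i]$, a presentation with strictly smaller maximum upper index, to which the inductive hypothesis applies to give $x = 0$.

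The main piece of work is the key observation placing $g_i f_i$ in a specific basis subset of $\bB_{n_i, n_i}^<$ rather than having it overlap $R_{n_i}$; this is what isolates the top-block contribution and lets the direct sum decomposition of \cref{mouse} bite. Once this is verified, the block-by-block projection together with linear independence of the basis $\bB_{N, N}$ completes the induction essentially formally.
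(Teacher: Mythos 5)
Your proof is correct and takes essentially the same approach as the paper's: both isolate the top block $1_N R 1_N$, observe that the products $g_i f_i = hfg$ land in $I_N$ as linearly independent elements of $\bB_{N,N}^<$, and invoke the decomposition $1_N R 1_N = R_N \oplus I_N$ from \cref{mouse}. The paper packages this as a single maximality/contradiction argument rather than an induction stripping off the top level, but the content is identical.
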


\begin{proof}
    Consider an arbitrary nonzero element
    \begin{equation} \label{seaweed}
        \sum_{m<n} \sum_{f \in D_m,\ g \in D_{m,n},\, h \in D_{n,m}} c_{f,g,h} [fg,h],\quad
        c_{f,g,h} \in \kk,
    \end{equation}
    of the second sum in \cref{wasabi}.  Choose the largest $N \in \N$ such that $c_{f,g,h} \ne 0$ for some $f \in D_m$, $g \in D_{m,N}$, $h \in D_{N,m}$, $m < N$, and consider the following decomposition of \cref{seaweed}:
    \begin{equation} \label{shrimp}
        \sum_{m<n<N} \sum_{f \in D_m,\ g \in D_{m,n},\, h \in D_{n,m}} c_{f,g,h} [fg,h]
        + \sum_{m<N} \sum_{f \in D_m,\ g \in D_{m,N},\, h \in D_{N,m}} c_{f,g,h} (\underbrace{fgh}_{\in 1_m R 1_m} - \underbrace{hfg}_{\in I_N}).
    \end{equation}
    By \eqref{B1}, the elements $hfg \in I_N$ for $f \in D_m$, $g \in D_{m,N}$, $h \in D_{N,m}$, are linearly independent.  Since all the other terms appearing in \cref{shrimp} lie in $\bigoplus_{m < N} 1_m R 1_m$, the element \cref{seaweed} cannot lie in $\bigoplus_{n \in \N} R_n$.
\end{proof}

\begin{prop} \label{snowshoe}
    Under the above assumptions on $R$, the inclusion $\bigoplus_{n \in \N} R_n \hookrightarrow R$ induces an isomorphism of graded $\kk$-supermodules
    \[
        \bigoplus_{n \in \N} C(R_n) \cong C(R).
    \]
\end{prop}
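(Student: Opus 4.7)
The plan is to combine the three preceding lemmas. By \cref{sled}, $C(R) \cong R_\diag/[R,R]_\diag$, so it suffices to prove that the composition $\bigoplus_n R_n \hookrightarrow R_\diag \twoheadrightarrow R_\diag/[R,R]_\diag$ is surjective with kernel exactly $\bigoplus_n [R_n,R_n]$.

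For injectivity, suppose $x \in \bigoplus_n R_n$ also lies in $[R,R]_\diag$. By \cref{ginger}, write $x = u + v$ where $u \in \bigoplus_n [R_n,R_n] \subseteq \bigoplus_n R_n$ and $v \in \sum_{m<n} \Span_\kk [D_m D_{m,n}, D_{n,m}]$. Then $v = x - u \in \bigoplus_n R_n$, so \cref{crab} forces $v = 0$, giving $x = u \in \bigoplus_n [R_n,R_n]$, as required.

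For surjectivity, I plan to prove by induction on $n$ that every element of $1_n R 1_n$ is congruent modulo $[R,R]_\diag$ to an element of $\bigoplus_{m \le n} R_m$. The base case $n=0$ is immediate since $I_0 = 0$ and $1_0 R 1_0 = R_0$. For the inductive step, the decomposition \cref{mouse} reduces the problem to showing that every spanning element of $I_n = \Span_\kk \bB_{n,n}^<$ lies in $\bigoplus_{m<n} R_m + [R,R]_\diag$. Such a spanning element has the form $fgh$ with $f \in D_{n,l}$, $g \in D_l$, $h \in D_{l,n}$ for some $l < n$. Taking $m = l$, the supercommutator $[gh, f]$ lies in $[D_m D_{m,n}, D_{n,m}] \subseteq [R,R]_\diag$ and, when expanded, rewrites $fgh$ as a sign times $ghf$ modulo $[R,R]_\diag$. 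Since $ghf \in 1_l R 1_l$, the induction hypothesis provides an element of $\bigoplus_{m \le l} R_m \subseteq \bigoplus_{m < n} R_m$ congruent to $ghf$, and hence to $fgh$, modulo $[R,R]_\diag$.

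The main obstacle, such as it is, lies in organizing the surjectivity induction; termination is automatic because each cycling step strictly decreases the maximum index appearing, and the sign bookkeeping follows directly from the supercommutator convention. The fact that the induced map respects the $(\Z \times \Z_2)$-grading is automatic since $R_n \hookrightarrow R$ is a graded inclusion of graded superalgebras. Taken together, the three lemmas align precisely to yield both directions of the claimed isomorphism.
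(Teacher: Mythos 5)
Your proposal is correct and follows essentially the same route as the paper: reduce via \cref{sled}, identify the kernel by combining \cref{ginger} with \cref{crab}, and prove surjectivity by induction using the cyclic rewriting $fgh \equiv \pm ghf$ modulo $[R,R]_\diag$ to push elements of $I_n$ down into $1_l R 1_l$ for $l<n$. The only cosmetic difference is that you argue on basis elements of $\bB_{n,n}^<$ while the paper phrases the same cycling step at the level of the subspaces $1_N R 1_n R 1_N$.
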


\begin{proof}
    In light of \cref{sled}, we consider the composition
    \[
        \varphi \colon \bigoplus_{n \in \N} R_n \hookrightarrow R_\diag \twoheadrightarrow R_\diag/[R,R]_\diag.
    \]
    By \cref{ginger,crab}, we have $\ker(\varphi) = [R,R]_\diag \cap \bigoplus_{n \in \N} R_n = \bigoplus_{n \in \N} [R_n,R_n]$.

    It remains to be shown that $\varphi$ is surjective.  We do this by showing that $\bigoplus_{n \le N} 1_n R 1_n + [R,R]_\diag$ is contained in $\im(\varphi)$ by induction on $N$.  The result is clearly true for $N=0$ since $I_0 = 0$, implying that $1_0 R 1_0 = R_0$.  Now assume $N>0$.  We have
    \[
        I_N + [R,R]_\diag
        = \sum_{n < N} 1_N R 1_n R 1_N + [R,R]_\diag
        = \sum_{n < N} 1_n R 1_N R 1_n + [R,R]_\diag.
    \]
    Thus
    \begin{multline*}
        1_N R 1_N + [R,R]_\diag
        = \left( R_N + [R,R]_\diag \right) + \left( I_N + [R,R]_\diag \right) \\
        = \left( R_N + [R,R]_\diag \right) + \left( \sum_{n < N} 1_n R 1_N R 1_n + [R,R]_\diag \right)
        \subseteq (R_N + [R,R]_\diag) + \left( \bigoplus_{n < N} 1_n R 1_n + [R,R]_\diag \right).
    \end{multline*}
    It then follows from the induction hypothesis that $\bigoplus_{n \le N} 1_n R 1_n + [R,R]_\diag \subseteq \im(\varphi)$.  Hence $\varphi$ is surjective.
\end{proof}

\subsection{Traces of linear categories\label{traces}}

Suppose $\cC$ is a $\kk$-linear graded supercategory.  The \emph{trace}, or \emph{zeroth Hochschild homology}, of $\cC$ is the graded $\kk$-module
\[
    \Tr(\cC) := \left( \bigoplus_X \End_\cC(X) \right) / \Span_\kk \{f \circ g - (-1)^{\bar{f} \bar{g}} g \circ f\},
\]
where the sum is over all objects $X$ of $\cC$, and $f$ and $g$ run through all pairs of morphisms $f \colon X \to Y$ and $g \colon Y \to X$ in $\cC$.  We let $\cocenter{f} \in \Tr(\cC)$ denote the class of an endomorphism $f \in \End_\cC(X)$.  If $\cC$ is a $\kk$-linear graded \emph{monoidal} supercategory, then $\Tr(\cC)$ is a graded $\kk$-superalgebra with multiplication given by
\begin{equation} \label{tracemult}
    \cocenter{f} \cocenter{g} = \cocenter{f \otimes g}.
\end{equation}
As shown in \cite[Prop.~3.2]{BGHL14}, the trace is invariant under passage to the additive Karoubi envelope:
\[
    \Tr(\Kar(\cC)) \cong \Tr(\cC).
\]

The data of a $\kk$-linear graded supercategory $\cR$ is the same as the data of a locally unital graded superalgebra $R$.  Under this identification, $\mathbf{R}$ is the object set of $\cR$, the idempotent $1_X$ is the identity endomorphism of the object $X$, $1_Y R 1_X := \Hom_\cR(X,Y)$, and multiplication in $R$ is induced by composition in $\cR$.  It then follows from \cref{sled} that
\begin{equation} \label{convert}
    \Tr(\cR) = C(R).
\end{equation}
If $\cR^\op$ denotes the opposite supercategory of $\cR$, then \cref{beach} implies that we have an isomorphism of graded superalgebras
\begin{equation} \label{sand}
  \Tr(\cR) \cong \Tr(\cR^\op).
\end{equation}

Let $\cC^\rev$ denote the reverse category of $\cC$.  This is the $\kk$-linear graded monoidal supercategory that is equal to $\cC$ as a $\kk$-linear graded supercategory, but with tensor product given by $X^\rev \otimes Y^\rev := (Y \otimes X)^\rev$ and $f^\rev \otimes g^\rev = (-1)^{\bar{f} \bar{g}} (g \otimes f)^\rev$ for objects $X,Y$ and morphisms $f,g$.  It follows that we have an isomorphism of graded superalgebras
\begin{equation} \label{flipflops}
  \Tr(\cC^\rev) \xrightarrow{\cong} \Tr(\cC)^\op,\quad
  \cocenter{f^\rev} \mapsto \cocenter{f}.
\end{equation}

\section{Frobenius W-algebras}

In this section we fix a symmetric graded Frobenius superalgebra $A$.  Our goal in this section is to associate to $A$ a W-algebra.

\subsection{Definition}

\begin{lem} \label{dentist}
    For $f(X),g(X) \in \kk[X]$ and $m,n \in \Z$, we have
    \[
        f \left( X + \tfrac{n}{2}Y \right) g \left( X -\tfrac{m}{2}Y \right) - f \left( X - \tfrac{n}{2}Y \right) g \left( X + \tfrac{m}{2}Y \right)
        \in Y \kk[X,Y^2] \subseteq \kk[X,Y].
    \]
\end{lem}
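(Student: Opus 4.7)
The plan is to reduce the claim to a single easily-verified symmetry property. Define
\[
    h(X,Y) := f\bigl(X + \tfrac{n}{2}Y\bigr)\, g\bigl(X - \tfrac{m}{2}Y\bigr) - f\bigl(X - \tfrac{n}{2}Y\bigr)\, g\bigl(X + \tfrac{m}{2}Y\bigr),
\]
which lies in $\kk[X,Y]$ since $f,g \in \kk[X]$. The key observation is that substituting $Y \mapsto -Y$ sends the first summand to the second and vice versa, so $h(X,-Y) = -h(X,Y)$. In other words, $h$ is an \emph{odd} polynomial in the variable $Y$, with coefficients in $\kk[X]$.

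Once this symmetry is in hand, the conclusion is purely formal. Writing $h(X,Y) = \sum_{i \ge 0} p_i(X)\, Y^i$ with $p_i \in \kk[X]$, the condition $h(X,-Y) = -h(X,Y)$ forces $p_i = 0$ for every even $i$. Thus
\[
    h(X,Y) = Y \sum_{j \ge 0} p_{2j+1}(X)\, (Y^2)^j,
\]
which exhibits $h$ as an element of $Y\kk[X,Y^2]$, as required.

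The argument involves no obstacles: the only substantive content is the antisymmetry of $h$ under $Y \mapsto -Y$, which is immediate from the definition. No case analysis in $m, n$ or induction on degrees of $f,g$ is needed, and bilinearity in $(f,g)$ is not even used.
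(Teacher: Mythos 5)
Your proof is correct. It is, however, a genuinely different (and slicker) argument than the one in the paper: the paper reduces by bilinearity to the monomial case $f(X)=X^r$, $g(X)=X^s$, expands both products binomially, and observes that the coefficient $(-1)^j-(-1)^i$ vanishes whenever $i+j$ is even. You instead note that the substitution $Y\mapsto -Y$ swaps the two summands of $h(X,Y)$, so $h$ is odd in $Y$, and an odd polynomial in $Y$ lies in $Y\kk[X,Y^2]$. Your route avoids the reduction to monomials and the explicit expansion entirely, and the antisymmetry observation is exactly the structural reason the paper's sign cancellation works; the paper's computation buys nothing extra here beyond making the cancellation visible term by term.
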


\begin{proof}
    Since the given expression is $\kk$-linear in both $f(X)$ and $g(X)$, it suffices to consider the case where $f(X) = X^r$ and $g(X) = X^s$ for some $r,s \in \N$.  Then we have
    \[
        \left( X + \tfrac{n}{2}Y \right)^r \left( X -\tfrac{m}{2}Y \right)^s - \left( X - \tfrac{n}{2}Y \right)^r \left( X + \tfrac{m}{2}Y \right)^s
        = \sum_{i=0}^r \sum_{j=0}^s \left( (-1)^j - (-1)^i \right) \binom{r}{i} \binom{s}{j} X^{r+s-i-j} \left( \tfrac{n}{2} \right)^i \left( \tfrac{m}{2} \right)^j Y^{i+j}.
    \]
    Since $(-1)^j = (-1)^i$ when $i+j$ is even, the result follows.
\end{proof}

For $f(X),g(X) \in \kk[X]$ and $m,n \in \Z$, define
\begin{equation} \label{star}
    f(X) \star_{m,n} g(X)
    := \left. \frac{f \left( X + \tfrac{n}{2} Y \right) g \left( X -\tfrac{m}{2} Y \right) - f \left( X - \tfrac{n}{2}Y \right) g \left( X + \tfrac{m}{2} Y \right)}{Y} \right|_{Y^2 = \kappa}
    \in Z(A)[X].
\end{equation}

Now let $\fD(A)$ denote the Lie algebra with underlying vector space $C(A)[X,z]$, and with Lie bracket
\begin{equation} \label{mars}
    [a f(X) z^m, b g(X) z^n]
    =
    (a \diamond b) \left( f(X) \star_{m,n} g(X) \right) z^{m+n},
\end{equation}
where we use the action \cref{senate} of the center on the cocenter.  If $u$ and $v$ are indeterminates, we can write the relation \cref{mars} in terms of generating functions as
\begin{equation} \label{martian}
    \left[ a \exp(uX) z^m, b \exp(vX) z^n \right]
    = 2a \diamond b \left. \frac{\sinh \left( \left(\tfrac{n}{2} u - \tfrac{m}{2}v \right) Y \right)}{Y} \right|_{Y^2=\kappa} \exp(uX + vX) z^{m+n}.
\end{equation}
It is a routine verification to check that this bracket is bilinear, alternating, and satisfies the Jacobi identity.
\details{
    It is clear that the bracket is bilinear and alternating.  Using the form \cref{martian}, to verify that the bracket satisfies the Jacobi identity, we must show that
    \begin{multline*}
        \left[ a\exp(uX)z^l, \left[ b\exp(vX)z^m, c\exp(wX)z^n \right] \right]
        + \left[ c\exp(wX)z^n, \left[ a\exp(uX)z^l, b\exp(vX)z^m \right] \right]
        \\
        + \left[ b\exp(vX)z^m, c\exp(wX)z^n, \left[ a\exp(uX)z^l \right] \right]
        = 0.
    \end{multline*}
    If we let
    \[
        x = \left( \tfrac{n}{2}v - \tfrac{m}{2}w \right) Y,\quad
        y = \left( \tfrac{l}{2}w - \tfrac{n}{2}u \right) Y,\quad
        z = \left( \tfrac{m}{2}u - \tfrac{l}{2}v \right) Y,
    \]
    then it suffices (using the fact that $\diamond$ is commutative) to show that
    \[
        \sinh(z-y) \sinh(x) + \sinh(x-z) \sinh(y) + \sinh(y-x) \sinh(z) = 0.
    \]
    This identity follows from expanding each $\sinh$ in terms of exponential functions, and seeing that all terms cancel.
}
We can extend the grading on $A$ to a Lie superalgebra grading on $\fD(A)$ by declaring $X$ to be even of degree $2d$.

Now, for $m \in \N$ and $f(X) \in \kk[X]$, define
\[
    f_{(m)}(X^2)
    := \sum_{j=1}^m f \left( \tfrac{m+1-2j}{2} X \right)
    \in \kk[X^2].
\]
By convention, $f_{(0)}(X^2) = 0$.  We then define $f_{(m)}(\kappa)$ to be the element of $Z(A)$ obtained from $f_{(m)}(X^2)$ by evaluating at $X^2 = \kappa$.  Define $\Psi \colon \fD(A) \times \fD(A) \to \kk$ by
\[
    \Psi \left( a f(X) z^m, b g(X) z^n \right)
    =
    \begin{cases}
        \tr \left( (fg)_{(m)}(\kappa) a \diamond b \right) & \text{if } m = -n \ge 0, \\
        - \tr \left( (fg)_{(n)}(\kappa) a \diamond b\right) & \text{if } m = -n < 0, \\
        0 & \text{if } m \ne -n.
    \end{cases}
\]
In terms of generating functions, we have
\[
    \Psi \left( a\exp(uX)z^m, b\exp(vX)z^n \right)
    = \delta_{m,-n} \tr \left( a \diamond b \left. \frac{\sinh \left( \tfrac{m}{2}(u+v) X \right)}{\sinh \left( \tfrac{1}{2} (u+v) X \right)} \right|_{X^2=\kappa} \right).
\]
(Note that, after cancelling a factor of $X$ from both the numerator and denominator of the ratio of hyperbolic sines, the denominator has nonzero constant term, hence is invertible.)
\details{
    For $m = -n \ge 0$, we have
    \[
        \Psi \left( a\exp(uX)z^m, b\exp(vX)z^n \right)
        = \tr \left( \exp_{(m)}((u+v)X)(\kappa) a \diamond b \right).
    \]
    Let $Y = \frac{1}{2}(u+v)X$.  Then we have
    \begin{align*}
        \frac{\sinh \left( \tfrac{m}{2}(u+v) X \right)}{\sinh \left( \tfrac{1}{2} (u+v) X \right)}
        &= \frac{\sinh(mY)}{\sinh(Y)} \\
        &= \frac{\exp(mY) - \exp(-mY)}{\exp(Y) - \exp(-Y)} \\
        &= \frac{\exp((m-1)Y) - \exp((-m-1)Y)}{1 - \exp(-2Y)} \\
        &= \left( \exp((m-1)Y) - \exp((-m-1)Y) \right) \left( \sum_{i=0}^\infty \exp(-2iY) \right) \\
        &= \exp((m-1)Y) + \exp((m-3)Y) + \dotsb + \exp((1-m)Y) \\
        &= \exp \left( \frac{m-1}{2} (u+v) X \right) + \exp \left( \frac{m-3}{2} (u+v) X \right) + \dotsb + \exp \left( \frac{1-m}{2} (u+v) X \right) \\
        &= \exp_{(m)}((u+v)X).
    \end{align*}
    The result follows.
}
It is a straightforward computation to verify that $\Psi$ is a 2-cocycle on $\fD(A)$.
\details{
    The map $\Psi$ is alternating by definition.  It is also clearly bilinear.  It remains to verify that it satisfies the Jacobi identity for 2-cycles.  We have
    \begin{multline*}
        \Psi \left( a\exp(uX)z^l, [b\exp(uX)z^m, c\exp(wV)z^n] \right)
        \\
        = \delta_{l+m+n,0} \tr \left( 2 a \diamond b \diamond c \left. \frac{\sinh \left( \left( \tfrac{n}{2}v - \tfrac{m}{2}w \right) X \right) \sinh \left( \tfrac{l}{2}(u+v+w)X \right)}{X \sinh \left( \tfrac{1}{2} (u+v+w) X \right)} \right|_{X^2=\kappa} \right).
    \end{multline*}
    Taking the cyclic sum with respect to $a,b,c$; $l,m,n$; and $u,v,w$, it suffices to show that
    \begin{multline*}
        \sinh \left( \left( \tfrac{n}{2}v - \tfrac{m}{2}w \right) X \right) \sinh \left( \tfrac{l}{2}(u+v+w)X \right)
        + \sinh \left( \left( \tfrac{l}{2}w - \tfrac{n}{2}u \right) X \right) \sinh \left( \tfrac{m}{2}(u+v+w)X \right)
        \\
        + \sinh \left( \left( \tfrac{m}{2}u - \tfrac{l}{2}v \right) X \right) \sinh \left( \tfrac{n}{2}(u+v+w)X \right)
        = 0
    \end{multline*}
    under the condition $l+m+n=0$.  This follows from expanding all $\sinh$ in terms of exponentials and seeing that all terms cancel.
}

We then define $\fW(A)$ to be the corresponding central extension of $\fD(A)$ by a one-dimensional vector space with generator $C$:
\[
    0 \to \kk C \to \fW(A) \to \fD(A) \to 0.
\]
Thus, the Lie bracket on $\fW(A)$ is given by
\begin{equation} \label{jupiter}
    [a f(X) z^m, b g(X) z^n]
    =
    (a \diamond b) \left( f(X) \star_{m,n} g(X) \right) z^{m+n} + \Psi \left( az^mf(X), bz^ng(X) \right) C,
\end{equation}
with the element $C$ being central.  In terms of generating functions, we have
\begin{multline} \label{mercury}
    [a\exp(uX)z^m, b\exp(vX)z^n]
    \\
    = 2 a \diamond b \left. \frac{\sinh \left( \left(\tfrac{n}{2} u - \tfrac{m}{2}v \right) Y \right)}{Y} \right|_{Y^2=\kappa} \exp(uX + vX) z^{m+n}
    + \delta_{m,-n} \tr \left( a \diamond b \left. \frac{\sinh \left( \tfrac{m}{2}(u+v) X \right)}{\sinh \left( \tfrac{1}{2} (u+v) X \right)} \right|_{X^2=\kappa} \right) C.
\end{multline}
We extended the grading on $\fD(A)$ to one on $\fW(A)$ by putting $C$ in degree zero.

Let $\rW(A)$ denote the universal enveloping algebra of $\fW(A)$.  For $m \in \Z$, $r \in \N$, and $a \in C(A)$, let
\[
    L_{m,r}(a)
    = a X^r z^m
    \in \fW(A).
\]
In addition to the degree grading, we define the \emph{rank} grading and \emph{order} filtration on $\fW(A)$ by setting
\begin{equation} \label{rankorder}
    \rank L_{m,r}(a) = m,\quad
    \ord L_{m,r}(a) = r,\quad
    \rank C = \ord C = 0.
\end{equation}
When $A=\kk$, these reduce to the usual rank grading and order filtration on $\fW_{1+\infty}$ (see \cite[F.2]{SV13}) under the isomorphism of \cref{rocks} below.

Note that, for $m,n \in \Z$, $r,s \in \N$, $a,b \in C(A)$,
\begin{align*}
    \Psi \left( L_{m,0}(a), L_{n,0}(b) \right)
    &= \delta_{m,-n} m \tr(a \diamond b), \\
    \Psi \left( L_{m,1}(a), L_{n,1}(b) \right)
    &= \delta_{m,-n} \frac{m^3-m}{12} \tr(\kappa a \diamond b), \\
    \Psi \left( L_{1,r}(a), L_{-1,s}(b) \right)
    &= \delta_{r+s,0} \tr(a \diamond b), \\
    \Psi \left( L_{0,r}(a), L_{n,s}(b) \right)
    &= 0, \\
    \Psi \left( L_{m,r}(a), L_{n,s}(b) \right)
    &= 0 \quad \text{when $r+s$ is odd}, \\
    \Psi \left( L_{m,r}(a), L_{n,s}(b) \right)
    &= 0 \quad \text{when $\kappa=0$ and $r+s > 0$}.
\end{align*}
Thus, for all $m,n \in \Z$, $r,s \in \N$, and $a,b \in C(A)$, we have
\begin{align}
    [L_{m,0}(a), L_{n,0}(b)] &= \delta_{m,-n} m \tr(a \diamond b) C, \label{comm1} \\
    [L_{m,1}(a), L_{n,1}(b)] &= (n-m) L_{m+n,1}(a \diamond b) + \delta_{m,-n} \frac{m^3-m}{12} \tr(\kappa a \diamond b) C, \label{comm2} \\
    [L_{m,1}(a), L_{n,0}(b)] &= n L_{m+n,0}(a \diamond b), \label{comm3} \\
    [L_{0,2}(a), L_{n,r}(b)] &= 2n L_{n,r+1}(a \diamond b),  \label{comm4} \\
    [L_{1,r}(a), L_{-1,s}(b)] &= -\sum_{i=0}^{\left\lfloor \frac{r+s-1}{2} \right\rfloor} \binom{r+s}{2i+1} \left( \frac{\kappa}{4} \right)^i L_{0,r+s-2i-1}(a \diamond b) + \delta_{r+s,0} \tr(a \diamond b) C, \label{comm5} \\
    [L_{1,0}(a), L_{n,r}(b)] &= - \sum_{i=0}^{\left\lfloor \frac{r-1}{2} \right\rfloor} \binom{r}{2i+1} \left( \frac{\kappa}{4} \right)^i L_{n+1,r-2i-1}(a \diamond b) + \delta_{n,-1} \delta_{r,0} \tr(a \diamond b) C. \label{comm6}
\end{align}

We see from \cref{comm1} that algebra generated by the $L_{m,0}(a)$, $m \in \Z$, $a \in C(A)$, is an analogue of an oscillator algebra.  In particular, the $L_{m,0}(a)$, $m \in \Z \setminus \{0\}$, $a \in C(A)$, generate a lattice Heisenberg algebra based on the lattice $C(A)$ with bilinear form $(a,b) \mapsto \tr(a \diamond b)$.  (See \cite[\S2.1]{LRS18}.)  In addition, we see from \cref{comm2} that the $L_{m,1}(a)$, $m \in \Z$, $a \in C(A)$, generate an analogue of a Virasoro algebra.

\begin{rem}[Change of trace map] \label{changeup}
    Suppose $(A,\tr_1)$ and $(A,\tr_2)$ are symmetric graded Frobenius superalgebras with the same underlying graded superalgebra $A$.  As in \cref{ifislip}, we use subscripts to distinguish between the two, and define $e \in A$ as in \cref{slip}.  It is a straightforward computation to verify that we have an isomorphism of Lie algebras
    \begin{equation}
        \fW_2(A) \xrightarrow{\cong} \fW_1(A),\quad
        a f(X) z^m \mapsto a f(eX) z^m,\quad
        C \mapsto C.
    \end{equation}
    \details{
        Consider the map
        \[
            \psi \colon \fD_2(A) \to \fD_1(A),\quad
            a f(X) z^m \mapsto a f(eX) z^m.
        \]
        Then we have
        \begin{align*}
            [ \psi(af(X)z^m),{}& \psi(bg(X)z^n) ]_1
            \\
            &= [af(eX)z^m, bg(eX)z^n]_1
            \\
            &= a \diamond_1 b \left. \frac{f \left( eX + \tfrac{n}{2} eY \right) g \left( eX -\tfrac{m}{2} eY \right) - f \left( eX - \tfrac{n}{2} eY \right) g \left( eX + \tfrac{m}{2} eY \right)}{Y} \right|_{Y^2 = \kappa_1} z^{m+n}
            \\
            &= a \diamond_2 b \left. \frac{f \left( eX + \tfrac{n}{2} Y \right) g \left( eX -\tfrac{m}{2} Y \right) - f \left( eX - \tfrac{n}{2} Y \right) g \left( eX + \tfrac{m}{2} Y \right)}{Y} \right|_{Y^2 = \kappa_2} z^{m+n}
            \\
            &= \psi \left( [af(X)z^m, bg(X)z^n] \right).
        \end{align*}
        Hence $\psi$ is an isomorphism of Lie algebras.  Furthermore, if $m=-n \ge 0$, we have
        \begin{align*}
            \Psi_1 \left( \psi(af(X)z^m), \psi(bg(X)z^n) \right)
            &= \Psi_1 \left( af(eX)z^m, bg(eX)z^n \right)
            \\
            &= \tr_1 \left( (fg)_{(m)}(e^2 \kappa_1) a \diamond_1 b \right)
            \\
            &\overset{\mathclap{\cref{slip}}}{\underset{\mathclap{\cref{slippin}}}{=}}\ \tr_2 \left( (fg)_{(m)}(\kappa_2) a \diamond_2 b \right)
            \\
            &= \Psi_2 \left( af(X)z^m, bg(X)z^n \right).
        \end{align*}
        The claim follows.
    }
    Hence $\fW(A)$ and $\rW(A)$ are independent of the trace map on $A$, up to isomorphism.  However, different choices of trace map give rise to different basis elements $L_{m,r}(a)$ of $\fW(A)$, and hence different presentations of $\rW(A)$.
\end{rem}

\subsection{Special cases}

We now discuss some special cases of the Frobenius W-algebra $\fW(A)$ that are of particular importance.  First, as we now explain, $\fW(A)$ is a Frobenius algebra generalization of the W-algebra $\fW_{1+\infty}$, which is the unique, up to isomorphism, central extension of the Lie algebra of regular differential operators on the circle.  The Lie algebra $\fW_{1+\infty}$ has basis $\{C, w_{n,r} : n \in \Z,\ r \in \Z\}$ and, given formal variables $u$ and $v$, the relations are given by $w_{n,r} = D^r z^n$ with
\[
    [ \exp(uD)z^m, \exp(vD) z^n]
    = \left( \exp(nu) - \exp(mv) \right) \exp(uD + vD) z^{m+n} + \delta_{m,-n} \frac{\exp(-mu) - \exp(-nv)}{1-\exp(u+v)} C.
\]
(See \cite[(2.2.2)]{KR93} and \cite[F.2]{SV13}.)

\begin{prop} \label{rocks}
    The linear map $\fW(\kk) \to \fW_{1+\infty}$ determined by
    \[
        X^r z^n \mapsto \left( D + \tfrac{n+1}{2} \right)^r z^n,\quad
        C \mapsto C,
    \]
    is an isomorphism of Lie algebras.
\end{prop}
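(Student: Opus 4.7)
The plan is to first observe that the map is a linear bijection, and then verify Lie bracket preservation by comparing both sides as formal power series in generating-function variables $u,v$. With respect to the order filtration from \cref{rankorder}, we have $(D + \tfrac{n+1}{2})^r z^n = D^r z^n + (\text{strictly lower order in } D)\,z^n$, so the map is unitriangular on the natural bases $\{C\} \cup \{X^r z^n\}$ of $\fW(\kk)$ and $\{C\} \cup \{D^r z^n\}$ of $\fW_{1+\infty}$, giving a linear isomorphism. To check that Lie brackets are preserved, by bilinearity it suffices to check the equality on the generating functions $\exp(uX) z^m$ and $\exp(vX) z^n$, whose coefficients of $u^r v^s$ encode all basis brackets. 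The image of $\exp(uX) z^m$ is $\exp(u(D+\tfrac{m+1}{2})) z^m = \exp(\tfrac{u(m+1)}{2}) \exp(uD) z^m$, so the task reduces to showing that the image under the map of $[\exp(uX)z^m, \exp(vX)z^n]$ (computed via \cref{mercury} specialized to $A=\kk$, $\kappa=1$, $\diamond=\cdot$) equals $\exp(\tfrac{u(m+1)+v(n+1)}{2}) [\exp(uD)z^m, \exp(vD)z^n]_{\fW_{1+\infty}}$ given by the defining relation.

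For the principal (non-central) part, \cref{mercury} with $\kappa=1$ together with the identity $\sinh(aY)/Y|_{Y^2=1} = \sinh(a)$ yields $2\sinh(\tfrac{n}{2}u - \tfrac{m}{2}v)\, \exp((u+v)X) z^{m+n}$, mapping to $2\sinh(\tfrac{n}{2}u - \tfrac{m}{2}v)\, \exp(\tfrac{(m+n+1)(u+v)}{2}) \exp((u+v)D) z^{m+n}$. Matching against the corresponding piece of the right-hand side reduces to the exponential identity
\[
    \exp\bigl(\tfrac{u(m+1)+v(n+1)}{2}\bigr)\bigl(\exp(nu) - \exp(mv)\bigr) = 2\sinh\bigl(\tfrac{n}{2}u - \tfrac{m}{2}v\bigr)\exp\bigl(\tfrac{(m+n+1)(u+v)}{2}\bigr),
\]
which I would verify by directly expanding each exponential.

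The main technical step, and the one to watch most carefully, is matching the central (cocycle) terms in the case $m=-n$. On the $\fW(\kk)$ side, the cocycle is $\tr\bigl(\tfrac{\sinh(\tfrac{m}{2}(u+v)X)}{\sinh(\tfrac{1}{2}(u+v)X)}\big|_{X^2=1}\bigr) C$; the key observation is that after cancelling the initial $X$-factor in each hyperbolic sine expansion the ratio is an \emph{even} polynomial in $X$, so the substitution $X^2=1$ is equivalent to setting $X=1$, yielding $\tfrac{\sinh(ms/2)}{\sinh(s/2)} C$ with $s=u+v$. On the $\fW_{1+\infty}$ side, incorporating the scalar prefactor gives
\[
    \exp\bigl(\tfrac{u(m+1)+v(1-m)}{2}\bigr) \cdot \frac{\exp(-mu) - \exp(mv)}{1 - \exp(u+v)} = \frac{\exp(\tfrac{(1-m)s}{2}) - \exp(\tfrac{(m+1)s}{2})}{1 - \exp(s)},
\]
and multiplying numerator and denominator by $\exp(-s/2)$ converts this into exactly $\tfrac{\sinh(ms/2)}{\sinh(s/2)}$. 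With both pieces matched, the map is a Lie algebra isomorphism. Everything outside the cocycle check is routine generating-function algebra; the cocycle identity is where a careful manipulation of hyperbolic sines is needed.
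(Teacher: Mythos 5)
Your proposal is correct and follows essentially the same route as the paper: both verify the bracket via the generating functions $\exp(uX)z^m$, pull out the scalar prefactor $\exp(\tfrac{(m+1)u+(n+1)v}{2})$, and match the principal and cocycle terms against \cref{mercury} with $\kappa=1$ using exactly the hyperbolic-sine manipulations you describe. Your explicit unitriangularity remark for linear bijectivity is a detail the paper leaves implicit, but the substance of the argument is identical.
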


\begin{proof}
    We have
    \begin{align*}
        [\exp(uX) z^m, \exp(vX) z^n]
        &= \exp \left( \tfrac{m+1}{2}u + \tfrac{n+1}{2}v \right) [\exp(uD) z^m, \exp(vD) z^n] \\
        &= \exp \left( \tfrac{m+1}{2}u + \tfrac{n+1}{2}v \right) \left( \exp(nu) - \exp(mv) \right) \exp(uD + vD) z^{m+n} \\
        &\qquad \qquad \qquad \qquad
        + \delta_{m,-n} \exp \left( \tfrac{m+1}{2}u + \tfrac{n+1}{2}v \right) \frac{\exp(-mu) - \exp(-nv)}{1-\exp(u+v)} C \\
        &= \left( \exp \left( \tfrac{n}{2} u - \tfrac{m}{2} v \right) - \exp \left( \tfrac{m}{2} v - \tfrac{n}{2} u \right) \right) \exp(uX + vX) z^{m+n} \\
        &\qquad \qquad \qquad \qquad
        + \delta_{m,-n} \frac{\exp \left( \tfrac{u+v}{2} \right) \left( \exp \left( \tfrac{n}{2} v - \tfrac{m}{2} u \right) - \exp \left( \tfrac{m}{2} u - \tfrac{n}{2} v \right) \right)}{1 - \exp(u+v)} C \\
        &= 2 \sinh \left( \tfrac{n}{2} u - \tfrac{m}{2} v \right) \exp(uX + vX) z^{m+n} + \delta_{m,-n} \frac{\sinh \left( \tfrac{m}{2} (u+v) \right)}{\sinh \left( \tfrac{u+v}{2} \right)} C.
    \end{align*}
    Comparing to \cref{mercury} and noting that $\kappa=1$ completes the proof.
\end{proof}

\begin{rem}
    The linear map defined in \cref{rocks} is related to the family of automorphisms $\Theta_s$, $s \in \kk$, of $\fW_{1+\infty}$ defined in \cite[(2.21)]{KWY98}, which fixes $z$ and sends $D \mapsto D+s$.  In light of \cref{rocks}, the map $\Theta_{\frac{n+1}{2}}$ can be viewed as an automorphism of $\fW_{1+\infty}$ sending $D$ to $X$.
\end{rem}

In many cases of interest, the presentation of $\fW(A)$ simplifies considerably, as we now explain.

\begin{prop} \label{Wangmatch}
    If $A$ is semisimple and $\kk$ is algebraically closed, then
    \[
        \fW(A) \cong  \left( C(A) \otimes \fW_{1+\infty} \right)/(aC - \tr(a)C : a \in C(A))
    \]
    as graded superalgebras, where $C(A) \otimes \fW_{1+\infty}$ is endowed with the Lie bracket
    \[
        [a \otimes x, b \otimes y] = (a \diamond b) [x,y],\quad a,b \in C(A),\ x,y \in \fW_{1+\infty}.
    \]
\end{prop}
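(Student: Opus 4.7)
The plan is to build an explicit surjective morphism of graded Lie superalgebras from $C(A) \otimes \fW_{1+\infty}$ onto $\fW(A)$ whose kernel coincides with the ideal generated by $\{aC - \tr(a)C : a \in C(A)\}$, then apply the first isomorphism theorem. The structural observation driving everything is the following: because $A$ is semisimple over an algebraically closed field, Wedderburn's theorem combined with \cref{matrix,semisimple} gives $\kappa_A = 1_A \in Z(A)$. Thus $\kappa_A$ acts as the identity on $C(A)$ via \cref{senate}, and serves as the unit for the $\diamond$-product on $C(A)$. This one fact is what decouples $\fW(A)$ into the tensor-product form.

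Using \cref{rocks} to identify $\fW_{1+\infty}$ with $\fW(\kk)$, I would define the linear map
\[
\phi \colon C(A) \otimes \fW(\kk) \to \fW(A),\qquad
a \otimes X^r z^m \mapsto a X^r z^m,\quad
a \otimes C \mapsto \tr(a)\, C.
\]
To verify that $\phi$ respects brackets, by bilinearity it suffices to compare $\phi\bigl((a \diamond b) \otimes [X^r z^m, X^s z^n]_{\fW(\kk)}\bigr)$ with $[aX^r z^m, bX^s z^n]_{\fW(A)}$. Expanding both sides using \cref{jupiter}, the non-central term on the right involves $X^r \star^{(A)}_{m,n} X^s$, which by \cref{star} is obtained by substituting $Y^2 = \kappa_A = 1_A$; since $1_A$ acts trivially on $C(A)$, this yields the same scalar polynomial in $X$ as its counterpart $X^r \star^{(\kk)}_{m,n} X^s$ in $\fW(\kk)$, and the two non-central terms match. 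For the central terms, $\Psi^{(A)}(aX^rz^m, bX^sz^n) = \delta_{m,-n} \tr\bigl((X^{r+s})_{(m)}(\kappa_A)(a \diamond b)\bigr)$; since $\kappa_A^i = 1_A$, this equals $\Psi^{(\kk)}(X^rz^m, X^sz^n)\tr(a \diamond b)$, which is exactly $\phi$ applied to the central term on the left.

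Finally, decompose $C(A) \otimes \fW_{1+\infty} = \fD(A) \oplus (C(A) \otimes \kk C)$. The map $\phi$ is the identity on the first summand and is $a \otimes C \mapsto \tr(a) C$ on the second. Since $\tr|_{C(A)}$ is nonzero (hence surjective) in the semisimple case, $\phi$ is surjective with $\ker \phi = \bigl(\ker \tr|_{C(A)}\bigr) \otimes \kk C$. Choosing any $a_0 \in C(A)$ with $\tr(a_0) = 1$, the ideal in the statement is generated by the elements $a \otimes C - \tr(a)(a_0 \otimes C) = (a - \tr(a)a_0) \otimes C$, whose span is precisely $(\ker \tr) \otimes \kk C = \ker \phi$. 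The induced map on the quotient is therefore a grading-preserving bijective Lie superalgebra morphism, proving the isomorphism. The only substantive step is the bracket compatibility in the middle paragraph; it is a routine computation once one leverages $\kappa_A = 1_A$.
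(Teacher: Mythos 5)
Your overall strategy is the same as the paper's: reduce to the case $\kappa = 1$, observe that the bracket \cref{jupiter} then factors as the $\diamond$-product on the $C(A)$ part times the $\fW_{1+\infty}$ bracket (exactly as in the proof of \cref{rocks}), and identify the central term via $\tr$. Your explicit map $\phi$, the bracket verification, and the identification of $\ker\phi$ with the ideal $(aC - \tr(a)C)$ are correct and are a useful fleshing-out of the paper's two-line proof.

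There is, however, one genuine gap at the very first step: you assert that semisimplicity of $A$ over an algebraically closed field forces $\kappa_A = 1_A$. That is false as stated, because $\kappa$ depends on the choice of trace map, which is part of the Frobenius data. \Cref{matrix,semisimple} compute $\kappa = 1$ only for the \emph{standard} matrix trace; for a general symmetric trace $\tr_2(\cdot) = \tr_1(e\,\cdot)$ one has $\kappa_2 = e^2 \kappa_1$ by \cref{slippin}, and, for example, the group algebra with its usual trace has $\kappa = \sum_{g,h} ghg^{-1}h^{-1} \ne 1$ in general (\cref{groupalg}). The repair is exactly what the paper invokes: by \cref{changeup}, $\fW(A)$ is independent of the trace map up to isomorphism, and the right-hand side of the statement is likewise unchanged, since the map $a \mapsto e^{-1}a$ of \cref{ifislip} intertwines $\diamond_1$ with $\diamond_2$ and preserves $\tr(a \diamond b)$; so one may first \emph{choose} the trace for which $\kappa = 1$ and then run your argument verbatim. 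A second, harmless, slip: $\cocenter{1_A}$ is not the unit for $\diamond$ (for $M_n(\kk)$ one computes $\cocenter{1} \diamond \cocenter{a} = n \cocenter{a}$, so the unit is $\tfrac{1}{n}\cocenter{1}$); what your bracket computation actually uses is only that $1_A \in Z(A)$ acts as the identity on $C(A)$ via \cref{senate} and that $\kappa^i = 1$, both of which are true once $\kappa = 1$ is arranged.
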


\begin{proof}
    By \cref{changeup,semisimple}, we may assume $\kappa=1$.  The result then follows from \cref{jupiter}, as in the proof of \cref{rocks}.
\end{proof}

\begin{rem}
    \Cref{Wangmatch} implies that $\fW(\kk \Gamma)$ is isomorphic to the W-algebra introduced by Wang \cite{Wan04} in his study of wreath products.  However, if one chooses the trace map that is projection onto the identity element, as in \cref{groupalg}, then $\kappa = \sum_{g,h \in \Gamma} ghg^{-1}h^{-1}$, and so the generating elements $L_{m,r}(a)$ yield a very different presentation of $\fW(\kk \Gamma)$.
\end{rem}

\begin{eg}[Case $\kappa=0$] \label{kapzero}
    Suppose $\kappa=0$.  For instance, this is the case whenever $A$ is nontrivially positively graded (in particular, when $A$ is a skew-zigzag algebra).  Then we have
    \[
        X^r \star_{m,n} X^s = (rn-sm) X^{r+s-1},
    \]
    and \cref{mercury} becomes
    \begin{equation} \label{pluto}
        [a\exp(uX)z^m, b\exp(vX)z^n]
        = (a \diamond b) \left( nu - mv \right) \exp(uX + vX) z^{m+n} + m \delta_{m,-n} \tr(a \diamond b) C.
    \end{equation}
    In particular,
    \begin{equation} \label{dragon}
        [L_{m,r}(a), L_{n,s}(b)]
        = (rn-sm) L_{m+n,r+s-1}(a \diamond b) + m \delta_{m,-n} \delta_{r+s,0} \tr(a \diamond b) C
        \quad \text{when } \kappa=0.
    \end{equation}
\end{eg}

\subsection{Basis}

Fix an ordered homogeneous basis $\bB_{C(A)}$ of $C(A)$. Note that $\fW(A)$ has a linear basis given by $\{L_{n,r}(a) : n \in \Z, r \in \N, a \in \bB_{C(A)}$\}.

\begin{defin} \label{order}
    Let $\vartriangleright$ be the order on $\Z$ given by
    \[
        \dotsb \vartriangleright -2 \vartriangleright -1 \vartriangleright \dotsb 3 \vartriangleright 2 \vartriangleright 1 \vartriangleright 0.
    \]
    In other words,
    \begin{itemize}
        \item $n \vartriangleright m$ if $n < 0$ and $m \ge 0$,
        \item $n \vartriangleright m$ if $n < m < 0$, and
        \item $n \vartriangleright m$ if $n > m \ge 0$.
    \end{itemize}
    Then let $\vartriangleright$ also denote the lexicographic order on $\Z \times \N \times \bB_{C(A)}$, using the order $\vartriangleright$ on $\Z$, the usual order $>$ on $\N$, and the given order $>$ on the basis $\bB_{C(A)}$.  Finally, define the relation $\gtrdot$ on $\Z \times \N \times \bB_{C(A)}$ by declaring that $(n_1,r_1,a_1) \gtrdot (n_2,r_2,a_2)$ if
    \begin{itemize}
        \item $(n_1,r_1,a_1) \vartriangleright (n_2,r_2,a_2)$, or
        \item $\bar{a_1} = 0$ and $(n_1,r_1,a_1) = (n_2,r_2,a_2)$.
    \end{itemize}
\end{defin}

Then the PBW Theorem for Lie superalgebras implies the following proposition.

\begin{prop} \label{Wbasis}
    Fix $k \in \Z$.  As a $\kk$-module, the algebra $W(A)/(C-k)$ has basis
    \begin{multline*}
        \{L_{n_1,r_1}(a_1) L_{n_2,r_2}(a_2) \dotsm L_{n_t,r_t}(a_t) :
        t \in \N,\ (n_i,r_i,a_i) \in \Z \times \N \times \bB_{C(A)},\\
        (n_1,r_1,a_1) \gtrdot (n_2,r_2,a_2) \gtrdot \dotsb \gtrdot (n_t,r_t,a_t)\}.
    \end{multline*}
\end{prop}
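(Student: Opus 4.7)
The plan is to deduce this directly from the Poincar\'e--Birkhoff--Witt theorem for Lie superalgebras. First I would observe that, since $X$ and $z$ are even, the parity of $L_{n,r}(a) = a X^r z^n$ is $\bar a$, so the set
\[
    \cB := \{L_{n,r}(a) : n \in \Z,\ r \in \N,\ a \in \bB_{C(A)}\} \cup \{C\}
\]
is a homogeneous $\kk$-basis of $\fW(A)$ (this is built into the very definition of $\fW(A)$ as $C(A)[X,z] \oplus \kk C$). Extend the total order $\vartriangleright$ on $\Z \times \N \times \bB_{C(A)}$ to a total order on the index set of $\cB$ by declaring $C$ to be the minimum element (say).

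Next I would invoke the super PBW theorem (e.g.\ as in Musson, \emph{Lie Superalgebras and Enveloping Algebras}, Ch.~6): for any Lie superalgebra $\fg$ with ordered homogeneous basis indexed by a totally ordered set, the universal enveloping algebra $U(\fg)$ has a $\kk$-basis consisting of ordered monomials in the basis elements, where even basis elements may repeat and odd basis elements may not. Applied to $\fW(A)$ with the above ordering, this yields a basis of $\rW(A)$ of the form
\[
    L_{n_1,r_1}(a_1) L_{n_2,r_2}(a_2) \dotsm L_{n_t,r_t}(a_t)\, C^s,
\]
with $s \in \N$ and $(n_1,r_1,a_1) \gtrdot \dotsb \gtrdot (n_t,r_t,a_t)$, where the relation $\gtrdot$ encodes precisely the PBW ordering condition: strict $\vartriangleright$--decrease is required unless both sides are even, in which case equality is allowed (this is exactly how \cref{order} is defined).

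Finally, to pass to the quotient $\rW(A)/(C-k)$, I would use that $C$ is central in $\fW(A)$, hence a central (and even, degree zero) element of $\rW(A)$. Consequently the $\kk$-subspace $A'$ spanned by the PBW monomials above with $s=0$ satisfies $\rW(A) \cong \kk[C] \otimes_\kk A'$ as $\kk$-modules. Under this identification the ideal generated by $C-k$ corresponds to $(C-k)\kk[C] \otimes A'$, so
\[
    \rW(A)/(C-k) \;\cong\; \bigl(\kk[C]/(C-k)\bigr) \otimes_\kk A' \;\cong\; A'
\]
as $\kk$-modules, and the image of the $s=0$ PBW basis is precisely the set asserted in the proposition. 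The only non-routine point is invoking the super PBW theorem correctly; the rest is bookkeeping about centrality of $C$.
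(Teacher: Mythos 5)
Your argument is correct and is exactly the route the paper takes: the paper simply asserts that "the PBW Theorem for Lie superalgebras implies the following proposition," and your proposal fills in the same standard details (homogeneous ordered basis of $\fW(A)$, the super PBW monomial condition encoded by $\gtrdot$, and freeness over the central subalgebra $\kk[C]$ to pass to the quotient by $C-k$).
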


\subsection{Symmetries}

Note that, for $f(X), g(X) \in \kk[X]$ and $m,n \in \Z$, we have
\[
  f(X) \star_{-m,-n} g(X) = - g(X) \star_{m,n} f(X)
  \quad \text{and} \quad
  \Psi( a z^{-m} f(X), b z^{-n} g(X) ) = - \Psi( az^mf(X), bz^ng(X) ).
\]
It follows that we have an automorphism of graded algebras
\begin{equation} \label{omega-alg}
  \omega \colon W(A) \xrightarrow{\cong} W(A),\quad
  a f(X) z^m \mapsto (-1)^{m-1} a f(X) z^{-m},\quad
  C \mapsto -C.
\end{equation}
\details{
  We have
  \begin{align*}
    [ \omega(af(X))z^m&, \omega(bg(X)z^n) ] \\
    &= (-1)^{n+m-2} [ a f(X) z^{-m}, b g(X) z^{-n} ] \\
    &= (-1)^{n+m-2} (a \diamond b) \left( f(X) \star_{-m,-n} g(X) \right) z^{-m-n} + (-1)^{n+m-2} \Psi( a z^{-m} f(X), b z^{-n} g(X) ) C \\
    &= (-1)^{n+m-1} (a \diamond b) \left( f(X) \star_{m,n} g(X) \right) z^{-m-n} - \Psi( a z^m f(X), b z^n g(X) ) C \\
    &= \omega \left( [af(X)z^m, bg(X)z^n] \right),
  \end{align*}
  where, in the third equality, we used the fact that $\Psi( a f(X) z^{-m}, b g(X) z^{-n} ) = 0$ unless $n=-m$.
}
We also have an isomorphism of graded algebras
\begin{equation} \label{phi-alg}
  \varphi \colon W(A) \xrightarrow{\cong} W(A)^\op,\quad
  a f(X) z^n \mapsto a f(X) z^{-n},\quad
  C \mapsto C.
\end{equation}
\details{
  We have
  \begin{align*}
    [ \varphi(af(X)z^m), \varphi(bg(X)z^n) ]
    &= [ a f(X) z^{-m}, b g(X) z^{-n} ] \\
    &= (a \diamond b) \left( f(X) \star_{-m,-n} g(X) \right) z^{-m-n} + \Psi( a z^{-m} f(X), b z^{-n} g(X) ) C \\
    &= - (a \diamond b) \left( f(X) \star_{m,n} g(X) \right) z^{-m-n} - \Psi( a z^m f(X), b z^n g(X) ) C \\
    &= - \varphi \left( [af(X)z^m, bg(X)z^n] \right) \\
    &= (-1)^{\bar{a} + \bar{b}} \varphi \left( [ bg(X)z^n, af(X)z^m ] \right).
  \end{align*}
}
In particular, we have
\begin{equation} \label{noodles}
  \omega(L_{n,r}(a)) = (-1)^{n-1} L_{-n,r}(a)
  \quad \text{and} \quad
  \varphi(L_{n,r}(a)) = L_{-n,r}(a).
\end{equation}

\section{The affine wreath product category}

We recall here the affine wreath product category, which is the main building block of the Frobenius Heisenberg category to be discussed in \cref{FHC}.  We continue to fix a graded Frobenius superalgebra $A$ with trace map of degree $-2d$.

\subsection{Affine wreath product algebras}

View $\kk[x_1,\dotsc,x_n]$ as a graded algebra with each $x_i$ even of degree $2d$.  Let $P_n(A)$ denote the graded algebra $A^{\otimes n} \otimes \kk[x_1,\dotsc,x_n]$ (tensor product of graded superalgebras), i.e.\ the algebra of polynomials in the commuting variables $x_1,\dotsc,x_n$ with coefficients in $A^{\otimes n}$.  Let $s_{i,j} \in S_n$ be the transposition of $i$ and $j$, and define $s_i := s_{i,i+1}$.  The symmetric group $S_n$ acts on $A^{\otimes n}$ by superpermutations; so that
\[
  s_i (a_n \otimes \dotsb \otimes a_1)
  = (-1)^{\bar{a_i} \bar{a}_{i+1}} a_n \otimes \dotsb \otimes a_{i+2} \otimes a_i \otimes a_{i+1} \otimes a_{i-1} \otimes \dotsb \otimes a_1.
\]
Note that we number factors from right to left.

For $a \in A$ and $1 \le i \le n$, define
\begin{equation} \label{aidef}
    a^{(i)} := 1^{\otimes (n-i)} \otimes a \otimes 1^{\otimes (i-1)} \in A^{\otimes n}.
\end{equation}
Then, for $1 \le i < j \le n$, define
\begin{equation} \label{tau}
  \tau_{i,j}
  := \sum_{b \in \bB_A} b^{(j)} \left( b^\vee \right)^{(i)},\quad
  \tau_i := \tau_{i,i+1}.
\end{equation}
Note that the $\tau_{i,j}$ do not depend on the choice of basis $\bB_A$.  It follows from \cref{contract} that
\begin{equation} \label{travel}
    \tau_{i,j} \ba = s_{i,j}(\ba) \tau_{i,j},
    \quad \ba \in A^{\otimes n}.
\end{equation}
In particular, this gives that $\tau_i \ba = s_i(\ba) \tau_i$.

For $i=1,\dots,n-1$, define the \emph{Demazure operator} $\partial_i \colon P_n(A) \rightarrow P_n(A)$ to be the even degree zero homogeneous linear map defined by
\begin{equation} \label{Demazure2}
    \partial_i (\ba p)
    := \tau_i \ba \frac{p-s_i(p)}{x_{i+1}-x_i} = s_i(\ba) \tau_i
    \frac{p-s_i(p)}{x_{i+1}-x_i},
    \quad \ba \in A^{\otimes n},\ p \in \kk[x_1,\dots,x_n].
\end{equation}
Equivalently, we have that
\begin{equation} \label{Demazure}
    \partial_i(f) = \frac{\tau_i f - s_i(f) \tau_i}{x_{i+1}-x_i},
\quad
f \in P_n(A).
\end{equation}
Note also that the Demazure operators satisfy the twisted Leibniz identity
\begin{equation}
  \partial_i(fg) = \partial_i(f) g + s_i(f) \partial_i(g),\quad
  f,g \in P_n(A).
\end{equation}
(See also \cite[\S4.1]{Sav20}.)

The \emph{affine wreath product algebra} $\AWA_n(A)$ is the graded vector superspace $P_n(A) \otimes \kk S_n$ viewed as a graded superalgebra with multiplication defined so that $P_n(A)$ and $\kk S_n$ are subsuperalgebras and
\begin{equation} \label{sif}
  s_i f = s_i(f) s_i + \partial_i(f),\quad
  f \in P_n(A),\ i \in \{1,2,\dotsc,n-1\}.
\end{equation}
Up to isomorphism, $\AWA_n(A)$ depends only on the underlying graded superalgebra $A$, and not on the trace map; see \cite[Lem.~3.2]{Sav20}.

We have a filtration of graded superalgebras of $\AWA_n(A)$ given by
\begin{equation} \label{tart}
    \AWA_n(A)_{0} \subseteq \AWA_n(A)_{\le 1} \subseteq \AWA_n(A)_{\le 2} \subseteq \dotsb,
\end{equation}
where $\AWA_n(A)_{\le r}$ is the subsuperalgebra of $\AWA_n(A)$ spanned by elements $f \otimes \pi$, where $\pi \in \kk S_n$ and $f \in P_n(A)$ has polynomial degree (i.e.\ degree in the $x_i$) less than or equal to $r$.

\subsection{The affine wreath product category}

The \emph{affine wreath product category} $\AWC$ is a strict graded monoidal category whose nonzero morphism spaces are the algebras $\AWA_n(A)$ for all $n \geq 0$. Formally, it is the strict $\kk$-linear graded monoidal category generated by one object $\uparrow$, morphisms
\[
    \begin{tikzpicture}[anchorbase]
      \draw[->] (0,0) -- (0,0.6);
      \singdot{0,0.3};
    \end{tikzpicture}
    \colon \uparrow \to \uparrow
    \ ,\quad
    \begin{tikzpicture}[anchorbase]
      \draw [->](0,0) -- (0.6,0.6);
      \draw [->](0.6,0) -- (0,0.6);
    \end{tikzpicture}
    \colon \uparrow \otimes \uparrow\ \to \uparrow \otimes \uparrow
    \ ,\quad
    \begin{tikzpicture}[anchorbase]
      \draw[->] (0,0) -- (0,0.6);
      \token{west}{0,0.3}{a};
    \end{tikzpicture}
    \colon \uparrow \to \uparrow
    \ ,\ a \in A,
\]
of degrees
\begin{equation} \label{updegs}
    \deg \left(
        \begin{tikzpicture}[anchorbase]
            \draw[->] (0,0) -- (0,0.6);
            \singdot{0,0.3};
        \end{tikzpicture}
    \right) = 2d,
    \quad
    \deg \left(
        \begin{tikzpicture}[anchorbase]
          \draw [->](0,0) -- (0.6,0.6);
          \draw [->](0.6,0) -- (0,0.6);
        \end{tikzpicture}
    \right) = 0,
    \quad
    \deg \left(
        \begin{tikzpicture}[anchorbase]
          \draw[->] (0,0) -- (0,0.6);
          \token{west}{0,0.3}{a};
        \end{tikzpicture}
    \right) = \deg a,
\end{equation}
parities $0$, $0$, and $\bar{a}$ (respectively), and subject to the relations
\begin{gather} \label{tokrel}
    \begin{tikzpicture}[anchorbase]
        \draw[->] (0,0) -- (0,0.7);
        \token{east}{0,0.35}{1};
    \end{tikzpicture}
    =
    \begin{tikzpicture}[anchorbase]
        \draw[->] (0,0) -- (0,0.7);
    \end{tikzpicture}
    ,\quad \lambda\:
    \begin{tikzpicture}[anchorbase]
        \draw[->] (0,0) -- (0,0.7);
        \token{east}{0,0.35}{a};
    \end{tikzpicture}
    + \mu\:
    \begin{tikzpicture}[anchorbase]
        \draw[->] (0,0) -- (0,0.7);
        \token{west}{0,0.35}{b};
    \end{tikzpicture}
    =
    \begin{tikzpicture}[anchorbase]
        \draw[->] (0,0) -- (0,0.7);
        \token{west}{0,0.35}{\lambda a +  \mu b};
    \end{tikzpicture}
    ,\quad
    \begin{tikzpicture}[anchorbase]
        \draw[->] (0,0) -- (0,0.7);
        \token{east}{0,0.2}{b};
        \token{east}{0,0.45}{a};
    \end{tikzpicture}
    =
    \begin{tikzpicture}[anchorbase]
        \draw[->] (0,0) -- (0,0.7);
        \token{west}{0,0.35}{ab};
    \end{tikzpicture}
    ,\quad
    \begin{tikzpicture}[anchorbase]
        \draw[->] (-0.3,-0.3) -- (0.3,0.3);
        \draw[->] (0.3,-0.3) -- (-0.3,0.3);
        \token{east}{-0.15,-0.15}{a};
    \end{tikzpicture}
    =
    \begin{tikzpicture}[anchorbase]
        \draw[->] (-0.3,-0.3) -- (0.3,0.3);
        \draw[->] (0.3,-0.3) -- (-0.3,0.3);
        \token{west}{0.1,0.1}{a};
    \end{tikzpicture}
    ,\quad a,b \in A,\ \lambda,\mu \in \kk,
    \\ \label{braidrel}
    \begin{tikzpicture}[anchorbase]
        \draw[->] (0,0) -- (1,1);
        \draw[->] (1,0) -- (0,1);
        \draw[->] (0.5,0) .. controls (0,0.5) .. (0.5,1);
    \end{tikzpicture}
    =
    \begin{tikzpicture}[anchorbase]
        \draw[->] (0,0) -- (1,1);
        \draw[->] (1,0) -- (0,1);
        \draw[->] (0.5,0) .. controls (1,0.5) .. (0.5,1);
    \end{tikzpicture}
    ,\quad
    \begin{tikzpicture}[anchorbase]
        \draw[->] (0,0) \braidto (0.5,0.5) \braidto (0,1);
        \draw[->] (0.5,0) \braidto (0,0.5) \braidto (0.5,1);
    \end{tikzpicture}
    =
    \begin{tikzpicture}[anchorbase]
        \draw[->] (0,0) --(0,1);
        \draw[->] (0.5,0) -- (0.5,1);
    \end{tikzpicture},
    \\ \label{affrel}
    \begin{tikzpicture}[anchorbase]
        \draw[->] (-0.3,-0.3) -- (0.3,0.3);
        \draw[->] (0.3,-0.3) -- (-0.3,0.3);
        \singdot{-0.12,0.12};
    \end{tikzpicture}
    \ -\
    \begin{tikzpicture}[anchorbase]
        \draw[->] (-0.3,-0.3) -- (0.3,0.3);
        \draw[->] (0.3,-0.3) -- (-0.3,0.3);
        \singdot{0.15,-0.15};
    \end{tikzpicture}
    = \sum_{b \in \bB_A}
    \begin{tikzpicture}[anchorbase]
        \draw[->] (-0.2,-0.3) -- (-0.2,0.3);
        \draw[->] (0.2,-0.3) -- (0.2,0.3);
        \token{east}{-0.2,0}{b};
        \token{west}{0.2,0}{b^\vee};
    \end{tikzpicture}
    \ ,\quad
    \begin{tikzpicture}[anchorbase]
        \draw[->] (0,0) -- (0,0.7);
        \singdot{0,0.2};
        \token{east}{0,0.4}{a};
    \end{tikzpicture}
    =
    \begin{tikzpicture}[anchorbase]
        \draw[->] (0,0) -- (0,0.7);
        \singdot{0,0.4};
        \token{west}{0,0.2}{a};
    \end{tikzpicture}
    \ ,\quad a \in A.
\end{gather}

We refer to the morphisms
$
  \begin{tikzpicture}[anchorbase]
    \draw[->] (0,0) -- (0,0.4);
    \token{west}{0,0.2}{a};
  \end{tikzpicture}
$
as \emph{tokens} and the morphisms
$
  \begin{tikzpicture}[anchorbase]
    \draw[->] (0,0) -- (0,0.4);
    \singdot{0,0.2};
  \end{tikzpicture}
$
as \emph{dots}.  It is immediate from this definition that there is an isomorphism of graded algebras
\begin{equation} \label{imathaff}
  \imath_n \colon \AWA_n(A) \to \End_{\AWC}(\uparrow^{\otimes n})
\end{equation}
sending $s_i$ to a crossing of the $i$-th and $(i+1)$-st strings, $x_j$ to a dot on the $j$-th string, and $a^{(j)}$ to a token labelled $a$ on the $j$-th string.  Here we number strings by $1,2,\dotsc$ from right to left.  Using $\imath_n$, we can \emph{identify} the algebra $\AWA_n(A)$ with $\End_{\AWC}(\uparrow^{\otimes n})$.  Since $\Hom_{\AWC}(\uparrow^{\otimes m}, \uparrow^{\otimes n}) = 0$ when $m \ne n$, we have an isomorphism of graded $\kk$-modules
\begin{equation} \label{beatdown}
    (\cocenter{\imath_n})_{n \in \N} \colon \bigoplus_{n \in \N} C(\AWA_n(A)) \xrightarrow{\cong} \Tr(\AWC).
\end{equation}

We refer to the morphisms in $\AWC$ defined by the elements $\tau_{i,j} \in \AWA_n(A)_{2d}$ from \cref{tau} as \emph{teleporters}, and denote them by a line segment connecting tokens on strings $i$ and $j$:
\begin{equation}\label{brexit}
  \begin{tikzpicture}[anchorbase]
    \draw[->] (0,-0.4) --(0,0.4);
    \draw[->] (0.5,-0.4) -- (0.5,0.4);
    \teleport{0,0}{0.5,0};
  \end{tikzpicture}
  =
  \begin{tikzpicture}[anchorbase]
    \draw[->] (0,-0.4) --(0,0.4);
    \draw[->] (0.5,-0.4) -- (0.5,0.4);
    \teleport{0,0.2}{0.5,-0.2};
  \end{tikzpicture}
  =
  \begin{tikzpicture}[anchorbase]
    \draw[->] (0,-0.4) --(0,0.4);
    \draw[->] (0.5,-0.4) -- (0.5,0.4);
    \teleport{0,-0.2}{0.5,0.2};
  \end{tikzpicture}
  = \sum_{b \in \bB_A}
  \begin{tikzpicture}[anchorbase]
    \draw[->] (0,-0.4) --(0,0.4);
    \draw[->] (0.5,-0.4) -- (0.5,0.4);
    \token{east}{0,0.15}{b};
    \token{west}{0.5,-0.15}{b^\vee};
  \end{tikzpicture}
  = \sum_{b \in \bB_A}
  \begin{tikzpicture}[anchorbase]
    \draw[->] (0,-0.4) --(0,0.4);
    \draw[->] (0.5,-0.4) -- (0.5,0.4);
    \token{east}{0,-0.15}{b^\vee};
    \token{west}{0.5,0.15}{b};
  \end{tikzpicture}\ .
\end{equation}
Note that we do not insist that the tokens in a teleporter are drawn at the same horizontal level, the convention when this is not the case being that $b$ is on the higher of the tokens and $b^\vee$ is on the lower one.  We will also draw teleporters in larger diagrams.  When doing so, we add a sign of $(-1)^{y \bar{b}}$ in front of the $b$ summand in \cref{brexit}, where $y$ is the sum of the parities of all morphisms in the diagram vertically between the tokens labeled $b$ and $b^\vee$.  For example,
\[
  \begin{tikzpicture}[anchorbase]
    \draw[->] (-1,-0.4) -- (-1,0.4);
    \draw[->] (-0.5,-0.4) -- (-0.5,0.4);
    \draw[->] (0,-0.4) -- (0,0.4);
    \draw[->] (0.5,-0.4) -- (0.5,0.4);
    \token{east}{-1,0}{a};
    \token{west}{0,0.1}{c};
    \teleport{-0.5,0.2}{0.5,-0.2};
  \end{tikzpicture}
  =
  \sum_{b \in \bB_A} (-1)^{(\bar{a} + \bar{c}) \bar{b}}
  \begin{tikzpicture}[anchorbase]
    \draw[->] (-1,-0.4) -- (-1,0.4);
    \draw[->] (-0.5,-0.4) -- (-0.5,0.4);
    \draw[->] (0,-0.4) -- (0,0.4);
    \draw[->] (0.5,-0.4) -- (0.5,0.4);
    \token{east}{-1,0}{a};
    \token{west}{0,0.1}{c};
    \token{east}{-0.5,0.2}{b};
    \token{west}{0.5,-0.2}{b^\vee};
  \end{tikzpicture}
  \ .
\]
This convention ensures that one can slide the endpoints of teleporters along strands:
\[
  \begin{tikzpicture}[anchorbase]
    \draw[->] (-1,-0.4) -- (-1,0.4);
    \draw[->] (-0.5,-0.4) -- (-0.5,0.4);
    \draw[->] (0,-0.4) -- (0,0.4);
    \draw[->] (0.5,-0.4) -- (0.5,0.4);
    \token{east}{-1,0}{a};
    \token{west}{0,0.1}{c};
    \teleport{-0.5,0.2}{0.5,-0.2};
  \end{tikzpicture}
  =
  \begin{tikzpicture}[anchorbase]
    \draw[->] (-1,-0.4) -- (-1,0.4);
    \draw[->] (-0.5,-0.4) -- (-0.5,0.4);
    \draw[->] (0,-0.4) -- (0,0.4);
    \draw[->] (0.5,-0.4) -- (0.5,0.4);
    \token{east}{-1,0}{a};
    \token{east}{0,0.1}{c};
    \teleport{-0.5,-0.2}{0.5,0.2};
  \end{tikzpicture}
  =
  \begin{tikzpicture}[anchorbase]
    \draw[->] (-1,-0.4) -- (-1,0.4);
    \draw[->] (-0.5,-0.4) -- (-0.5,0.4);
    \draw[->] (0,-0.4) -- (0,0.4);
    \draw[->] (0.5,-0.4) -- (0.5,0.4);
    \token{east}{-1,0}{a};
    \token{west}{0,0.1}{c};
    \teleport{-0.5,0.2}{0.5,0.2};
  \end{tikzpicture}
  =
  \begin{tikzpicture}[anchorbase]
    \draw[->] (-1,-0.4) -- (-1,0.4);
    \draw[->] (-0.5,-0.4) -- (-0.5,0.4);
    \draw[->] (0,-0.4) -- (0,0.4);
    \draw[->] (0.5,-0.4) -- (0.5,0.4);
    \token{east}{-1,0}{a};
    \token{west}{0,0.1}{c};
    \teleport{-0.5,-0.2}{0.5,-0.2};
  \end{tikzpicture}
  \ .
\]

Using teleporters, the first relation in \cref{affrel} can now be written as
\begin{equation} \label{upslides1}
  \begin{tikzpicture}[anchorbase]
    \draw[->] (-0.3,-0.3) -- (0.3,0.3);
    \draw[->] (0.3,-0.3) -- (-0.3,0.3);
    \singdot{-0.12,0.12};
  \end{tikzpicture}
  \ -\
  \begin{tikzpicture}[anchorbase]
    \draw[->] (-0.3,-0.3) -- (0.3,0.3);
    \draw[->] (0.3,-0.3) -- (-0.3,0.3);
    \singdot{0.15,-0.15};
  \end{tikzpicture}
  \ =\
  \begin{tikzpicture}[anchorbase]
    \draw[->] (-0.2,-0.3) -- (-0.2,0.3);
    \draw[->] (0.2,-0.3) -- (0.2,0.3);
    \teleport{-0.2,0}{0.2,0};
  \end{tikzpicture}
\end{equation}
Composing with the crossing on the top and bottom, we see that we also have the relation
\begin{equation} \label{upslides2}
  \begin{tikzpicture}[anchorbase]
    \draw[->] (-0.3,-0.3) -- (0.3,0.3);
    \draw[->] (0.3,-0.3) -- (-0.3,0.3);
    \singdot{-0.12,-0.12};
  \end{tikzpicture}
  \ -\
  \begin{tikzpicture}[anchorbase]
    \draw[->] (-0.3,-0.3) -- (0.3,0.3);
    \draw[->] (0.3,-0.3) -- (-0.3,0.3);
    \singdot{0.15,0.15};
  \end{tikzpicture}
  \ =\
  \begin{tikzpicture}[anchorbase]
    \draw[->] (-0.2,-0.3) -- (-0.2,0.3);
    \draw[->] (0.2,-0.3) -- (0.2,0.3);
    \teleport{-0.2,0}{0.2,0};
  \end{tikzpicture}
  \ .
\end{equation}
It follows from \cref{travel} that tokens can ``teleport'' across teleporters (justifying the terminology) in the sense that, for $a \in A$, we have
\begin{equation} \label{tokteleport}
  \begin{tikzpicture}[anchorbase]
    \draw[->] (0,-0.5) --(0,0.5);
    \draw[->] (0.5,-0.5) -- (0.5,0.5);
    \token{west}{0.5,-0.25}{a};
    \teleport{0,0}{0.5,0};
  \end{tikzpicture}
=  \begin{tikzpicture}[anchorbase]
    \draw[->] (0,-0.5) --(0,0.5);
    \draw[->] (0.5,-0.5) -- (0.5,0.5);
    \token{east}{0,0.25}{a};
    \teleport{0,0}{0.5,0};
  \end{tikzpicture}
   \ ,
\qquad
  \begin{tikzpicture}[anchorbase]
    \draw[->] (0,-0.5) --(0,0.5);
    \draw[->] (0.5,-0.5) -- (0.5,0.5);
    \token{east}{0,-0.25}{a};
    \teleport{0,0}{0.5,0};
  \end{tikzpicture}
  =
  \begin{tikzpicture}[anchorbase]
    \draw[->] (0,-0.5) --(0,0.5);
    \draw[->] (0.5,-0.5) -- (0.5,0.5);
    \token{west}{0.5,0.25}{a};
    \teleport{0,0}{0.5,0};
  \end{tikzpicture}
  \ ,
\end{equation}
where the strings can occur anywhere in a diagram (i.e.\ they do not need to be adjacent).  The endpoints of teleporters slide through crossings and dots, and they can teleport too.  For example we have
\begin{equation} \label{laser}
  \begin{tikzpicture}[anchorbase]
    \draw[->] (-0.4,-0.4) to (0.4,0.4);
    \draw[->] (0.4,-0.4) to (-0.4,0.4);
    \draw[->] (0.8,-0.4) to (0.8,0.4);
    \teleport{-0.15,0.15}{0.8,0.15};
  \end{tikzpicture}
  \ =\
  \begin{tikzpicture}[anchorbase]
    \draw[->] (-0.4,-0.4) to (0.4,0.4);
    \draw[->] (0.4,-0.4) to (-0.4,0.4);
    \draw[->] (0.8,-0.4) to (0.8,0.4);
    \teleport{0.2,-0.2}{0.8,-0.2};
  \end{tikzpicture}
  \ ,\qquad
  \begin{tikzpicture}[anchorbase]
    \draw[->] (-0.2,-0.5) to (-0.2,0.5);
    \draw[->] (0.2,-0.5) to (0.2,0.5);
    \singdot{-0.2,0};
    \teleport{-0.2,-0.25}{0.2,0};
  \end{tikzpicture}
  \ =\
  \begin{tikzpicture}[anchorbase]
    \draw[->] (-0.2,-0.5) to (-0.2,0.5);
    \draw[->] (0.2,-0.5) to (0.2,0.5);
    \singdot{-0.2,0};
    \teleport{-0.2,0.25}{0.2,0};
  \end{tikzpicture}
  \ ,\qquad
  \begin{tikzpicture}[anchorbase]
    \draw[->] (-0.4,-0.5) to (-0.4,0.5);
    \draw[->] (0,-0.5) to (0,0.5);
    \draw[->] (0.4,-0.5) to (0.4,0.5);
    \teleport{-0.4,-0.25}{0,-0.25};
    \teleport{0,0}{0.4,0};
  \end{tikzpicture}
  \ =\
  \begin{tikzpicture}[anchorbase]
    \draw[->] (-0.4,-0.5) to (-0.4,0.5);
    \draw[->] (0,-0.5) to (0,0.5);
    \draw[->] (0.4,-0.5) to (0.4,0.5);
    \teleport{-0.4,0.25}{0.4,0.25};
    \teleport{0,0}{0.4,0};
  \end{tikzpicture}
    \ .
\end{equation}

\section{The Frobenius Heisenberg category\label{FHC}}

We now recall the definition of the Frobenius Heisenberg category.  We follow the definition of \cite[Def.~5.1]{BSW20}, which is a slight modification of the original definition in \cite{Sav19}.  When $k=-1$, this category was introduced earlier in \cite{RS17}.

\begin{defin}
  The \emph{Frobenius Heisenberg category} $\Heis_k(A)$ of central charge $k \in \Z$ is the strict graded monoidal category generated by objects $\uparrow$ and $\downarrow$ and morphisms
  \begin{gather*}
    \begin{tikzpicture}[anchorbase]
      \draw[->] (0,0) -- (0,0.6);
      \singdot{0,0.3};
    \end{tikzpicture}
    \colon \uparrow \to \uparrow
    \ ,\quad
    \begin{tikzpicture}[anchorbase]
      \draw [->](0,0) -- (0.6,0.6);
      \draw [->](0.6,0) -- (0,0.6);
    \end{tikzpicture}
    \colon \uparrow \otimes \uparrow\ \to \uparrow \otimes \uparrow
    \ ,\quad
    \begin{tikzpicture}[anchorbase]
      \draw[->] (0,0) -- (0,0.6);
      \token{west}{0,0.3}{a};
    \end{tikzpicture}
    \colon \uparrow \to \uparrow
    \ ,\ a \in A,
    \\
    \begin{tikzpicture}[anchorbase]
      \draw[->] (0,.2) -- (0,0) arc (180:360:.3) -- (.6,.2);
    \end{tikzpicture}
    \ \colon \one \to\ \downarrow\otimes \uparrow
    , \quad
    \begin{tikzpicture}[anchorbase]
      \draw[->] (0,-.2) -- (0,0) arc (180:0:.3) -- (.6,-.2);
    \end{tikzpicture}
    \ \colon \uparrow\otimes \downarrow\ \to \one
    , \quad
    \begin{tikzpicture}[anchorbase]
      \draw[<-] (0,.2) -- (0,0) arc (180:360:.3) -- (.6,.2);
    \end{tikzpicture}
    \ \colon \one \to\ \uparrow\otimes \downarrow
    , \quad
    \begin{tikzpicture}[anchorbase]
      \draw[<-] (0,0) -- (0,.2) arc (180:0:.3) -- (.6,0);
    \end{tikzpicture}
    \ \colon \downarrow\otimes \uparrow\ \to \one,
  \end{gather*}
  subject to certain relations.  The degree and parities of the dot, crossing, and token are as in \cref{updegs}, while the cups/caps are even of degrees
  \begin{equation} \label{trappist}
    \deg \left(\:
    \begin{tikzpicture}[anchorbase]
      \draw[->] (0,.2) -- (0,0) arc (180:360:.3) -- (.6,.2);
    \end{tikzpicture}
    \: \right) = k d,
    \quad
    \deg \left( \:
    \begin{tikzpicture}[anchorbase]
      \draw[->] (0,-.2) -- (0,0) arc (180:0:.3) -- (.6,-.2);
    \end{tikzpicture}
    \:\right) = - k d,
    \quad
    \deg \left(
    \:\begin{tikzpicture}[anchorbase]
      \draw[<-] (0,.2) -- (0,0) arc (180:360:.3) -- (.6,.2);
    \end{tikzpicture}
    \:\right) = - k d,
    \quad
    \deg \left(
    \:\begin{tikzpicture}[anchorbase]
      \draw[<-] (0,0) -- (0,.2) arc (180:0:.3) -- (.6,0);
    \end{tikzpicture}
    \:\right) = k d.
  \end{equation}
  The defining relations are \cref{tokrel,braidrel,affrel}, plus the following additional relations:
  \begin{gather} \label{squid}
    \begin{tikzpicture}[anchorbase]
      \draw[->] (-0.3,-0.5) to (-0.3,0) to[out=up,in=up,looseness=2] (0,0) to[out=down,in=down,looseness=2] (0.3,0) to (0.3,0.5);
    \end{tikzpicture}
    \ =\
    \begin{tikzpicture}[anchorbase]
      \draw[->] (0,-0.5) to (0,0.5);
    \end{tikzpicture}
    \ ,\qquad
    \begin{tikzpicture}[anchorbase]
      \draw[->] (-0.3,0.5) to (-0.3,0) to[out=down,in=down,looseness=2] (0,0) to[out=up,in=up,looseness=2] (0.3,0) to (0.3,-0.5);
    \end{tikzpicture}
    \ =\
    \begin{tikzpicture}[anchorbase]
      \draw[<-] (0,-0.5) to (0,0.5);
    \end{tikzpicture}
    \ ,
  \\ \label{chew}
    \cbubble{a}{r}
    \ = -\delta_{r,k-1} \tr(a)
    \quad \text{if } 0 \le r < k,
    \qquad
    \ccbubble{a}{r}
    \ = \delta_{r,-k-1} \tr(a)
    \quad \text{if } 0 \le r < -k,
    \\ \label{curly}
    \begin{tikzpicture}[anchorbase]
      \draw[->] (0,-0.5) to[out=up,in=0] (-0.3,0.2) to[out=180,in=up] (-0.45,0) to[out=down,in=180] (-0.3,-0.2) to[out=0,in=down] (0,0.5);
    \end{tikzpicture}
    = \delta_{k,0}\
    \begin{tikzpicture}[anchorbase]
      \draw[->] (0,-0.5) -- (0,0.5);
    \end{tikzpicture}
    \quad \text{if } k \le 0,
    \qquad
    \begin{tikzpicture}[anchorbase]
      \draw[->] (0,-0.5) to[out=up,in=180] (0.3,0.2) to[out=0,in=up] (0.45,0) to[out=down,in=0] (0.3,-0.2) to[out=180,in=down] (0,0.5);
    \end{tikzpicture}
    = \delta_{k,0}\
    \begin{tikzpicture}[anchorbase]
      \draw[->] (0,-0.5) -- (0,0.5);
    \end{tikzpicture}
    \quad \text{if } k \ge 0,
    \\ \label{squish}
    \begin{tikzpicture}[anchorbase]
      \draw[<-] (0,0) \braidto (0.5,0.6) \braidto (0,1.2);
      \draw[->] (0.5,0) \braidto (0,0.6) \braidto (0.5,1.2);
    \end{tikzpicture}
    \ =\
    \begin{tikzpicture}[anchorbase]
      \draw[<-] (0,0) -- (0,1.2);
      \draw[->] (0.3,0) -- (0.3,1.2);
    \end{tikzpicture}
    \ + \sum_{r,s \ge 0}
    \begin{tikzpicture}[anchorbase]
      \draw[->] (-0.2,0.6) to (-0.2,0.35) arc(180:360:0.2) to (0.2,0.6);
      \draw[<-] (-0.2,-0.6) to (-0.2,-0.35) arc(180:0:0.2) to (0.2,-0.6);
      \draw[->] (0.8,0) arc(360:0:0.2);
      \multdot{east}{0.4,0}{-r-s-2};
      \multdot{west}{0.2,0.42}{r};
      \multdot{west}{0.2,-0.42}{s};
      \teleport{0.18,0.25}{0.459,0.141};
      \teleport{0.18,-0.25}{0.459,-0.141};
    \end{tikzpicture}
    \ ,\qquad
    \begin{tikzpicture}[anchorbase]
      \draw[->] (0,0) \braidto (0.5,0.6) \braidto (0,1.2);
      \draw[<-] (0.5,0) \braidto (0,0.6) \braidto (0.5,1.2);
    \end{tikzpicture}
    \ =\
    \begin{tikzpicture}[anchorbase]
      \draw[->] (0,0) -- (0,1.2);
      \draw[<-] (0.3,0) -- (0.3,1.2);
    \end{tikzpicture}
    \ + \sum_{r,s \ge 0}
    \begin{tikzpicture}[anchorbase]
      \draw[<-] (-0.2,0.6) to (-0.2,0.35) arc(180:360:0.2) to (0.2,0.6);
      \draw[->] (-0.2,-0.6) to (-0.2,-0.35) arc(180:0:0.2) to (0.2,-0.6);
      \draw[->] (-0.8,0) arc(-180:180:0.2);
      \multdot{west}{-0.4,0}{-r-s-2};
      \multdot{east}{-0.2,0.42}{r};
      \multdot{east}{-0.2,-0.42}{s};
      \teleport{-0.18,0.25}{-0.459,0.141};
      \teleport{-0.18,-0.25}{-0.459,-0.141};
    \end{tikzpicture}
    \ .
  \end{gather}
  Note that the relations \cref{squish} involve some diagrammatic shorthands which have not yet been defined.  In particular, we are using the left and right crossings defined by
  \begin{equation} \label{fall}
    \begin{tikzpicture}[anchorbase]
      \draw[->] (0,0) -- (0.6,0.6);
      \draw[<-] (0.6,0) -- (0,0.6);
    \end{tikzpicture}
    \ :=\
    \begin{tikzpicture}[anchorbase]
      \draw[->] (0.2,-0.3) to (-0.2,0.3);
      \draw[<-] (0.6,-0.3) to[out=up,in=45,looseness=2] (0,0) to[out=225,in=down,looseness=2] (-0.6,0.3);
    \end{tikzpicture}
    \ ,\qquad
    \begin{tikzpicture}[anchorbase]
      \draw[<-] (0,0) -- (0.6,0.6);
      \draw[->] (0.6,0) -- (0,0.6);
    \end{tikzpicture}
    \ :=\
    \begin{tikzpicture}[anchorbase]
      \draw[->] (-0.2,-0.3) to (0.2,0.3);
      \draw[<-] (-0.6,-0.3) to[out=up,in=135,looseness=2] (0,0) to[out=-45,in=down,looseness=2] (0.6,0.3);
    \end{tikzpicture}
    \ .
  \end{equation}
  In addition, the sums in \cref{squish} also involve negatively-dotted bubbles.  To interpret these, one first uses the second relation in \cref{affrel} to collect the tokens which arise on expanding the definitions of the teleporters into a single token.  Then the negatively-dotted bubbles labelled by $a \in A$ are defined by
  \begin{align} \label{fakel}
    \begin{tikzpicture}[baseline={(0,-0.15)}]
      \bubleft{0,0}{a}{r-k};
    \end{tikzpicture}
      &:= \sum_{b_1,\dotsc,b_{r} \in \bB_A} \det
      \left(
    \begin{tikzpicture}[baseline={(0,-0.15)}]
      \draw[->] (0,0.3) arc(90:-270:0.3);
      \token{east}{-0.25,0.17}{b^\vee_{j-1}b_j};
      \multdot{east}{-0.25,-0.17}{i-j+k};
    \end{tikzpicture}
    \right)_{i,j=1}^{r+1},\quad\:\:\qquad
    r < k,
    \\ \label{faker}
    \begin{tikzpicture}[baseline={(0,-0.15)}]
      \bubright{0,0}{a}{r+k};
    \end{tikzpicture}
    &:= (-1)^{r} \sum_{b_1,\dotsc,b_{r} \in \bB_A} \det
    \left(
    \begin{tikzpicture}[baseline={(0,-0.15)}]
      \draw[->] (0,0.3) arc(90:450:0.3);
      \token{west}{0.25,-0.17}{b^\vee_{j-1}b_j};
      \multdot{west}{0.25,0.17}{i-j-k};
    \end{tikzpicture}
    \right)_{i,j=1}^{r+1},\quad
    r < -k,
  \end{align}
  with the convention that $b_0^\vee := a$ and $b_{r+1} := 1$.  The determinants here mean the usual Laplace expansions keeping the non-commuting variables in each monomial ordered in the same way as the columns from which they are taken (see \cite[(17)]{Sav19}), and we interpret them as $\tr(a)$ if $r+1 = 0$ or as $0$ if $r+1 < 0$. Note in particular that the sums in \cref{squish} are actually finite according to these definitions.  This concludes the definition of $\Heis_k(A)$.
\end{defin}

Up to isomorphism, $\Heis_k(A)$ depends only on the underlying graded superalgebra $A$, and not on its trace map; see \cite[Lem.~5.3]{BSW20}.  As explained in the proof of \cite[Th.~1.2]{Sav19}, the defining relations of $\Heis_k(A)$ imply that the following, which is a $(1+k\dim A)\times 1$ matrix or a $1 \times (1-k \dim A)$ matrix, respectively, is an isomorphism in the additive envelope of $\Heis_k(A)$:
\begin{equation} \label{invrel}
  \begin{aligned}
    \left[
      \begin{tikzpicture}[anchorbase]
        \draw [->](0,0) -- (0.6,0.6);
        \draw [<-](0.6,0) -- (0,0.6);
      \end{tikzpicture}
      \quad
      \begin{tikzpicture}[anchorbase]
        \draw[->] (0,0) -- (0,0.5) arc (180:0:.3) -- (0.6,0);
        \multdot{west}{0,0.4}{r};
        \token{west}{0,0.15}{b^\vee};
      \end{tikzpicture}\ ,\
      0 \le r \le k-1,\ b \in \bB_A
    \right]^T
    \ &\colon \uparrow \otimes\downarrow \to \downarrow \otimes\uparrow \oplus \one ^{\oplus k \dim A} & \text{if } k \ge 0,
    \\
    \left[
      \begin{tikzpicture}[anchorbase]
        \draw [->](0,0) -- (0.6,0.6);
        \draw [<-](0.6,0) -- (0,0.6);
      \end{tikzpicture}
      \quad
      \begin{tikzpicture}[anchorbase]
        \draw[->] (0,0.8) -- (0,0.3) arc (180:360:.3) -- (0.6,0.8);
        \multdot{east}{0.6,0.5}{r};
        \token{east}{0.6,0.25}{b^\vee};
      \end{tikzpicture}\ ,\
      0 \le r \le -k-1,\ b \in \bB_A
    \right]
    \ &\colon \uparrow \otimes\downarrow \oplus \one^{\oplus (-k \dim A)} \to \downarrow \otimes\uparrow
    & \text{if } k < 0.
  \end{aligned}
\end{equation}

\begin{rem}
    \begin{enumerate}
        \item The category $\Heis_{-1}(\kk)$ is the original Heisenberg category introduced in \cite{Kho11}.  The definition was extended to arbitrary central charge $k$ in \cite{MS18,Bru18}.
        \item The category $\Heis_0(\kk)$ is the affine oriented Brauer category of \cite{BCNR17}.  The categories $\Heis_0(A)$ can be viewed as Frobenius algebra deformations of the affine oriented Brauer category.
        \item When $A$ is a skew-zigzag algebra, the category $\Heis_{-1}(A)$ was studied in \cite[\S 6.1]{CL12}.
    \end{enumerate}
\end{rem}

We will need some other relations, which are proved in \cite[Th.~1.3]{Sav19}.  The relation
\cref{squid} means that $\downarrow$ is right dual to $\uparrow$ (in the appropriate graded sense).  In fact, we also have
\begin{equation}
    \begin{tikzpicture}[anchorbase]
        \draw[<-] (-0.3,-0.5) to (-0.3,0) to[out=up,in=up,looseness=2] (0,0) to[out=down,in=down,looseness=2] (0.3,0) to (0.3,0.5);
    \end{tikzpicture}
    \ =\
    \begin{tikzpicture}[anchorbase]
        \draw[<-] (0,-0.5) to (0,0.5);
    \end{tikzpicture}
    \ ,\qquad
    \begin{tikzpicture}[anchorbase]
        \draw[<-] (-0.3,0.5) to (-0.3,0) to[out=down,in=down,looseness=2] (0,0) to[out=up,in=up,looseness=2] (0.3,0) to (0.3,-0.5);
    \end{tikzpicture}
    \ =\
    \begin{tikzpicture}[anchorbase]
        \draw[->] (0,-0.5) to (0,0.5);
    \end{tikzpicture}
    \ ,
\end{equation}
and so $\downarrow$ is also left dual to $\uparrow$.  Thus $\Heis_k(A)$ is \emph{rigid}.  In fact, $\Heis_k(A)$ admits a strictly pivotal structure.  We define downward tokens, dots, and crossings by
\begin{gather*}
  \begin{tikzpicture}[anchorbase]
    \draw[<-] (0,-0.3) -- (0,0.3);
    \token{west}{0,0}{a};
  \end{tikzpicture}
  :=\
  \begin{tikzpicture}[anchorbase]
    \draw[->] (-0.4,0.5) -- (-0.4,-0.1) arc(180:360:0.2) -- (0,0.1) arc(180:0:0.2) -- (0.4,-0.5);
    \token{west}{0,0}{a};
  \end{tikzpicture}
  \ =\
  \begin{tikzpicture}[anchorbase]
    \draw[->] (0.4,0.5) -- (0.4,-0.1) arc(360:180:0.2) -- (0,0.1) arc(0:180:0.2) -- (-0.4,-0.5);
    \token{west}{0,0}{a};
  \end{tikzpicture}
  \ ,\qquad
  \begin{tikzpicture}[anchorbase]
      \draw[<-] (0,-0.3) -- (0,0.3);
      \singdot{0,0};
  \end{tikzpicture}
  :=
  \begin{tikzpicture}[anchorbase]
    \draw[->] (-0.4,0.5) -- (-0.4,-0.1) arc(180:360:0.2) -- (0,0.1) arc(180:0:0.2) -- (0.4,-0.5);
    \singdot{0,0};
  \end{tikzpicture}
  \ =\
  \begin{tikzpicture}[anchorbase]
    \draw[->] (0.4,0.5) -- (0.4,-0.1) arc(360:180:0.2) -- (0,0.1) arc(0:180:0.2) -- (-0.4,-0.5);
    \singdot{0,0};
  \end{tikzpicture}
  \ ,
  \\
  \begin{tikzpicture}[anchorbase]
    \draw[<-] (0,0) -- (.5,.6);
    \draw[<-] (.5,0) -- (0,.6);
  \end{tikzpicture}
  :=
  \begin{tikzpicture}[anchorbase,scale=0.5]
    \draw[->] (-1.5,1.5) .. controls (-1.5,0.5) and (-1,-1) .. (0,0) .. controls (1,1) and (1.5,-0.5) .. (1.5,-1.5);
    \draw[->] (-2,1.5) .. controls (-2,-2) and (1.5,-1.5) .. (0,0) .. controls (-1.5,1.5) and (2,2) .. (2,-1.5);
  \end{tikzpicture}
  \ =\
  \begin{tikzpicture}[anchorbase]
    \draw[<-] (0.2,-0.3) to (-0.2,0.3);
    \draw[<-] (0.6,-0.3) to[out=up,in=45,looseness=2] (0,0) to[out=225,in=down,looseness=2] (-0.6,0.3);
  \end{tikzpicture}
  \ =\
  \begin{tikzpicture}[anchorbase]
    \draw[<-] (-0.2,-0.3) to (0.2,0.3);
    \draw[<-] (-0.6,-0.3) to[out=up,in=135,looseness=2] (0,0) to[out=-45,in=down,looseness=2] (0.6,0.3);
  \end{tikzpicture}
  \ =\
  \begin{tikzpicture}[anchorbase,scale=0.5]
    \draw[->] (1.5,1.5) .. controls (1.5,0.5) and (1,-1) .. (0,0) .. controls (-1,1) and (-1.5,-0.5) .. (-1.5,-1.5);
    \draw[->] (2,1.5) .. controls (2,-2) and (-1.5,-1.5) .. (0,0) .. controls (1.5,1.5) and (-2,2) .. (-2,-1.5);
  \end{tikzpicture}
  \ .
\end{gather*}
It follows that dots, tokens, and crossings slide over cups and caps.  We will sometimes draw dots and tokens at the critical point of cups and caps since this causes no ambiguity.

There is a natural symmetry between the categories $\Heis_k(A)$ and $\Heis_{-k}(A)$ that we now describe. For an object $X$ in $\Heis_k(A)$, let $\Omega_k(X)$ be the object of $\Heis_{-k}(A)$ obtained by replacing all $\uparrow$'s with $\downarrow$'s and $\downarrow$'s with $\uparrow$'s.  Now let $f \colon X \rightarrow Y$ be a morphism in $\Heis_k(A)$ represented by some diagram.  Let $x$ be the total number of crossings and let $y$ by the total number of \emph{odd} tokens in the diagram.  Then let $\Omega_k(f) \colon \Omega_k(Y) \rightarrow \Omega_k(X)$ be the morphism in $\Heis_{-k}(A)$ defined by the reflection of the diagram $f$ in a horizontal axis multiplied by $(-1)^{x+\binom{y}{2}}$.  Then there is an isomorphism of strict
graded monoidal categories
\begin{equation} \label{Omega}
  \Omega_k \colon \Heis_k(A) \xrightarrow{\cong} \Heis_{-k}(A)^\op,
  \qquad
  X \mapsto \Omega_k(X), \quad
  f \mapsto \Omega_k(f)^\op.
\end{equation}

We also have an isomorphism
\begin{equation} \label{Phi}
  \Phi_k \colon \Heis_k(A) \xrightarrow{\cong} \left( \Heis_k(A)^\op \right)^\rev = \left( \Heis_k(A)^\rev \right)^\op
\end{equation}
given by rotating diagrams through $180\degree$, then multiplying by $(-1)^{\binom{y}{2}}$, where $y$ is the number of odd tokens.

The bubbles (both genuine and fake) satisfy the \emph{infinite grassmannian relations}: for $a,b \in A$, we have
\begin{gather} \label{weed}
  \cbubble{a}{r} = -\delta_{r,k-1} \tr(a)
  \quad \text{if } r \le k-1,
  \quad
  \ccbubble{a}{r} = \delta_{r,-k-1} \tr(a)
  \quad \text{if } r \le -k-1,
  \\ \label{infgrass}
  \sum_{r \in \Z}
  \begin{tikzpicture}[baseline={(0,-0.1)}]
    \draw[->] (0,0.5) arc(90:450:0.2);
    \draw[->] (0,-0.5) arc(270:-90:0.2);
    \teleport{0,0.1}{0,-0.1};
    \token{east}{-0.2,0.3}{a};
    \token{east}{-0.2,-0.3}{b};
    \multdot{west}{0.2,0.3}{r};
    \multdot{west}{0.2,-0.3}{n-r-2};
  \end{tikzpicture}
  \ = - \delta_{n,0} \tr(ab) 1_\one.
\end{gather}
We also have the \emph{curl relations}
\begin{equation} \label{curls}
  \begin{tikzpicture}[anchorbase]
    \draw[->] (0,-0.5) to[out=up,in=0] (-0.3,0.2) to[out=180,in=up] (-0.45,0) to[out=down,in=180] (-0.3,-0.2) to[out=0,in=down] (0,0.5);
    \multdot{east}{-0.45,0}{r};
  \end{tikzpicture}
  = \sum_{s \ge 0}
  \begin{tikzpicture}[anchorbase]
    \draw[->] (0,-0.5) -- (0,0.5);
    \draw[->] (-0.5,0.2) arc(90:450:0.2);
    \multdot{east}{-0.7,0}{r-s-1};
    \multdot{west}{0,0.25}{s};
    \teleport{-0.3,0}{0,0};
  \end{tikzpicture}
  \ ,\qquad
  \begin{tikzpicture}[anchorbase]
    \draw[->] (0,-0.5) to[out=up,in=180] (0.3,0.2) to[out=0,in=up] (0.45,0) to[out=down,in=0] (0.3,-0.2) to[out=180,in=down] (0,0.5);
    \multdot{west}{0.45,0}{r};
  \end{tikzpicture}
  = - \sum_{s \ge 0}
  \begin{tikzpicture}[anchorbase]
    \draw[->] (0,-0.5) -- (0,0.5);
    \draw[->] (0.5,0.2) arc(90:-270:0.2);
    \multdot{west}{0.7,0}{r-s-1};
    \multdot{east}{0,0.25}{s};
    \teleport{0.3,0}{0,0};
  \end{tikzpicture}
  \ ,
\end{equation}
the \emph{bubble slide relations}
\begin{equation} \label{bubslide}
  \begin{tikzpicture}[anchorbase]
    \draw[->] (0,-0.5) to (0,0.5);
    \bubleft{0.6,0}{a}{r};
  \end{tikzpicture}
  =
  \begin{tikzpicture}[anchorbase]
    \draw[->] (0,-0.5) to (0,0.5);
    \bubleft{-0.6,0}{a}{r};
  \end{tikzpicture}
  \ - \sum_{s,t \ge 0}
  \begin{tikzpicture}[anchorbase]
    \draw[->] (0,-0.6) to (0,0.5);
    \draw[->] (-0.3,0) arc(0:360:0.2);
    \teleport{-0.5,0.2}{0,0.2};
    \teleport{-0.5,-0.2}{0,-0.2};
    \token{west}{0,0}{a};
    \multdot{west}{0,-0.4}{s+t};
    \multdot{east}{-0.7,0}{r-s-t-2};
  \end{tikzpicture}
  \ ,
  \begin{tikzpicture}[anchorbase]
    \draw[->] (0,-0.5) to (0,0.5);
    \bubright{-0.6,0}{a}{r};
  \end{tikzpicture}
  =
  \begin{tikzpicture}[anchorbase]
    \draw[->] (0,-0.5) to (0,0.5);
    \bubright{0.6,0}{a}{r};
  \end{tikzpicture}
  \ - \sum_{s,t \ge 0}
  \begin{tikzpicture}[anchorbase]
    \draw[->] (0,-0.6) to (0,0.5);
    \draw[->] (0.3,0) arc(180:-180:0.2);
    \teleport{0.5,0.2}{0,0.2};
    \teleport{0.5,-0.2}{0,-0.2};
    \token{east}{0,0}{a};
    \multdot{east}{0,-0.4}{s+t};
    \multdot{west}{0.7,0}{r-s-t-2};
  \end{tikzpicture}
  \ ,
\end{equation}
and the \emph{alternating braid relation}
\begin{equation} \label{altbraid}
  \begin{tikzpicture}[anchorbase]
    \draw[->] (-0.4,-0.5) to (0.4,0.5);
    \draw[->] (0.4,-0.5) to (-0.4,0.5);
    \draw[<-] (0,-0.5) \braidto (-0.4,0) \braidto (0,0.5);
  \end{tikzpicture}
  -
  \begin{tikzpicture}[anchorbase]
    \draw[->] (-0.4,-0.5) to (0.4,0.5);
    \draw[->] (0.4,-0.5) to (-0.4,0.5);
    \draw[<-] (0,-0.5) \braidto (0.4,0) \braidto (0,0.5);
  \end{tikzpicture}
  = \sum_{r,s,t \ge 0}
  \begin{tikzpicture}[anchorbase]
    \draw[<-] (-0.2,0.8) to (-0.2,0.6) arc(180:360:0.2) to (0.2,0.8);
    \draw[->] (-0.2,-0.8) to (-0.2,-0.6) arc(180:0:0.2) to (0.2,-0.8);
    \draw[->-=0.17] (0,-0.2) arc(-90:270:0.2);
    \draw[->] (0.6,-0.8) to (0.6,0.8);
    \multdot{east}{-0.2,0}{-r-s-t-3};
    \multdot{east}{-0.2,0.6}{r};
    \multdot{east}{-0.2,-0.6}{s};
    \multdot{west}{0.6,0.5}{t};
    \teleport{0,0.4}{0,0.2};
    \teleport{0,-0.4}{0,-0.2};
    \teleport{0.2,0}{0.6,0};
  \end{tikzpicture}
  + \sum_{r,s,t \ge 0}
  \begin{tikzpicture}[anchorbase]
    \draw[<-] (0.2,0.8) to (0.2,0.6) arc(360:180:0.2) to (-0.2,0.8);
    \draw[->] (0.2,-0.8) to (0.2,-0.6) arc(0:180:0.2) to (-0.2,-0.8);
    \draw[->-=0.17] (0,-0.2) arc(-90:-450:0.2);
    \draw[->] (-0.6,-0.8) to (-0.6,0.8);
    \multdot{west}{0.2,0}{-r-s-t-3};
    \multdot{west}{0.2,0.6}{r};
    \multdot{west}{0.2,-0.6}{s};
    \multdot{east}{-0.6,0.5}{t};
    \teleport{0,0.2}{0,0.4};
    \teleport{0,-0.2}{0,-0.4};
    \teleport{-0.2,0}{-0.6,0};
  \end{tikzpicture}
  \ .
\end{equation}

The center of $\Heis_k(A)$, consisting of closed diagrams, is a Frobenius analogue of the ring of symmetric functions, as we now explain.  Let $\Sym(A)$ denote the graded symmetric superalgebra generated by the graded super vector space $C(A)[x]$, where $x$ here is an even indeterminate of degree $2d$.  For $n \in \Z$ and $a \in A$, let $h_n(a) \in \Sym(A)$ denote
\begin{equation}
    h_n(a) :=
    \begin{cases}
        0 & \text{if $n < 0$}, \\
        \tr(a) & \text{if $n=0$}, \\
        \cocenter{a} x^{n-1} & \text{if $n > 0$}.
    \end{cases}
\end{equation}
This defines an even homogeneous linear map $h_n \colon A \rightarrow \Sym(A)$ of degree $2d(n-1)$.  Since $h_n(a)$ depends only on the image of $a \in C(A)$, we can define $h_n(\cocenter{a}) = h_n(a)$.  If $\bB_{C(A)}$ is a homogeneous basis of $C(A)$, then $\Sym(A)$ is the graded supercommutative superalgebra generated freely by the elements $h_r(a)$, $r \ge 1$, $a \in \bB_{C(A)}$, with $\deg(h_r(a)) = 2d(r-1) + \deg(a)$ and $\overline{h_r(a)} = \bar{a}$.

\begin{lem}[{\cite[Lem.~7.1]{BSW20}}] \label{controversy}
    For each $n \in \Z$, there is a unique even homogeneous linear map $e_n \colon A \rightarrow \Sym(A)$ of degree $2(n-1)d$ such that
    \[
        \sum_{c \in \bB_A} e(ac;-u) h(c^\vee b; u) = \tr(ab),
        \qquad
        \text{for all }a,b \in A,
    \]
    where we are using the generating functions
    \[
        e(a;u) := \sum_{n \geq 0} e_n(a) u^{-n}, \quad
        h(a;u) := \sum_{n \geq 0} h_n(a) u^{-n}
        \in \Sym(A) \llbracket u^{-1} \rrbracket.
    \]
\end{lem}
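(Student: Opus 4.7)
The plan is to construct $e_n$ inductively on $n$ by extracting the coefficient of $u^{-n}$ in the displayed generating-function identity. Expanding both sides and collecting coefficients yields, for each $N \ge 0$, the equation
\[
\sum_{m=0}^N (-1)^m \sum_{c \in \bB_A} e_m(ac) h_{N-m}(c^\vee b) = \delta_{N,0} \tr(ab).
\]
The $N = 0$ case reads $\sum_c e_0(ac) \tr(c^\vee b) = \tr(ab)$; by the contract identity \cref{contract}, this both forces and is solved by $e_0 := \tr$, which is even of degree $-2d = 2(0-1)d$ as required.

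For the inductive step, assume $e_0, \dots, e_{n-1}$ have already been uniquely determined. Isolating the $m = n$ term of the $N = n$ equation gives
\[
(-1)^n \sum_c e_n(ac) \tr(c^\vee b) = - \sum_{m=0}^{n-1} (-1)^m \sum_c e_m(ac) h_{n-m}(c^\vee b).
\]
The key manipulation is to apply the teleport identity \cref{teleport} followed by \cref{contract} to rewrite $\sum_c e_n(ac) \tr(c^\vee b) = \sum_c e_n(c) \tr(c^\vee ab) = e_n(ab)$. Specializing to $a = 1$ then produces the explicit recursive formula
\[
e_n(b) = (-1)^{n+1} \sum_{m=0}^{n-1} (-1)^m \sum_{c \in \bB_A} e_m(c) h_{n-m}(c^\vee b),
\]
which forces uniqueness of $e_n$ and defines it as a well-defined linear map $A \to \Sym(A)$.

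For existence, one must check that this definition satisfies the $N = n$ equation for all $a \in A$, not just for $a = 1$. This follows by a second application of teleport to each summand on the right-hand side, which migrates $a$ into the second factor, reducing the general equation to the defining formula evaluated at $ab$. The homogeneity claims follow by induction using $\deg(c^\vee) = 2d - \deg(c)$ and $\overline{c^\vee} = \bar c$: each summand in the formula for $e_n(b)$ has degree $2(m-1)d + \deg(c) + 2(n-m-1)d + (2d - \deg c + \deg b) = 2(n-1)d + \deg b$ and parity $\bar b$, so $e_n$ is even of degree $2(n-1)d$.

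No serious obstacle is expected, since this is a Frobenius-decorated version of the classical recursion $\sum_{m+j=N} (-1)^m e_m h_{N-m} = \delta_{N,0}$ relating elementary and complete symmetric functions. The only point requiring care is the symmetric use of the teleport identity to move $a$ back and forth between the two factors of $\bB_A \otimes \bB_A^\vee$; once that trick is in place, the single equation obtained by specializing to $a = 1$ automatically propagates to the full identity for all $a,b \in A$.
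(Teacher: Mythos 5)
Your proof is correct: the coefficient-of-$u^{-N}$ extraction, the base case $e_0=\tr$ (forced and verified via \cref{contract}), the recursion obtained by specializing $a=1$, the use of \cref{teleport} together with \cref{contract} to propagate the $a=1$ identity to arbitrary $a$ (note that no super-signs arise since one is simply applying a $\kk$-bilinear map to both sides of an identity in $A\otimes A$), and the degree/parity bookkeeping using $\deg(c^\vee)=2d-\deg(c)$ all check out. The paper itself offers no proof of this statement --- it is quoted from \cite[Lem.~7.1]{BSW20} --- and your inductive argument is essentially the standard one underlying that reference.
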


For $r \in \Z$, we define a map
\begin{equation} \label{powerdef}
    p_r \colon A \to \Sym(A),\quad
    p_r(a) :=
    \begin{cases}
        0 & \text{if } r \le 0, \\
        \sum_{s=1}^r \sum_{b \in \bB_A} (-1)^{s-1} s h_{r-s}(b) e_s(b^\vee a) & \text{if } r > 0.
    \end{cases}
\end{equation}

\begin{cor} \label{coconut}
    We have an isomorphism of graded commutative algebras
    \begin{equation} \label{beta}
        \begin{gathered}
            \beta_k \colon \Sym(A) \to \End_{\Heis_k(A)}(\one), \\
            e_r(a) \mapsto (-1)^{r-1} \ccbubble{a}{r-k-1},\
            h_r(a) \mapsto \cbubble{a}{r+k-1},\
            p_r(a) \mapsto \sum_{\substack{s,t \ge 0 \\ s+t=r}} s
            \begin{tikzpicture}[centerbase]
                \draw[->] (0.4,0.2) arc(90:450:0.2);
                \draw[->] (-0.4,0.2) arc(90:-270:0.2);
                \token{west}{0.6,0}{a};
                \teleport{-0.2,0}{0.2,0};
                \multdot{north}{0.4,-0.2}{s-k-1};
                \multdot{north}{-0.4,-0.2}{t+k-1};
            \end{tikzpicture}
            \ ,\ r \in \Z,\ a \in A.
        \end{gathered}
    \end{equation}
\end{cor}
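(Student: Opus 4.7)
The plan is to build $\beta_k$ from the free generators of $\Sym(A)$, deduce bijectivity from the basis theorem of \cite{BSW20}, and then verify the two remaining formulas by direct comparison with known identities. By construction, $\Sym(A)$ is the free graded supercommutative superalgebra on $\{h_r(a) : r \geq 1,\ a \in \bB_{C(A)}\}$. Appealing to the basis theorem \cite[Th.~7.2]{BSW20} cited throughout the paper, $\End_{\Heis_k(A)}(\one)$ is freely generated as a graded supercommutative superalgebra by the clockwise bubbles $\cbubble{a}{r+k-1}$ indexed by the same set (supercommutativity of closed diagrams is immediate from the pivotal structure, since two bubbles in $\End(\one)$ can be slid past one another horizontally). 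Hence the assignment $h_r(a) \mapsto \cbubble{a}{r+k-1}$ extends uniquely to the desired isomorphism $\beta_k$.

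To identify $\beta_k(e_r(a))$, I would compare the infinite grassmannian relation \cref{infgrass} with the defining functional equation of $e_n$ from \cref{controversy}. After expanding the teleporter in \cref{infgrass} by \cref{teleport} and organizing the bubbles into generating functions in $u^{-1}$, one finds that the grassmannian identity becomes $\sum_{c \in \bB_A} \bar{e}(ac;-u) \bar{h}(c^\vee b;u) = \tr(ab)$ in $\End_{\Heis_k(A)}(\one)\llbracket u^{-1} \rrbracket$, where $\bar{h}(a;u) := \sum_{n \geq 0} \cbubble{a}{n+k-1} u^{-n}$ and $\bar{e}(a;u) := \sum_{n \geq 0} (-1)^{n-1} \ccbubble{a}{n-k-1} u^{-n}$. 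Since this is the same functional equation used in \cref{controversy} to characterize $e_n$, uniqueness of the solution yields $\beta_k(e_r(a)) = (-1)^{r-1} \ccbubble{a}{r-k-1}$.

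For the $p_r(a)$ formula, I would expand \cref{powerdef} and insert the values of $\beta_k(h_{r-s}(b))$ and $\beta_k(e_s(b^\vee a))$ already obtained, giving a double sum of pairs of concentric bubbles. Collapsing the inner sum $\sum_{b \in \bB_A}$ via the teleportation property \cref{teleport} turns each pair into a teleporter joining the two bubbles, with the label $a$ ending up on the counterclockwise bubble; the signs $(-1)^{s-1}$ appearing in $\beta_k(e_s)$ combine with those in \cref{powerdef} to produce the coefficient $s$ displayed in \cref{beta}. The main obstacle in this argument will be sign bookkeeping — both in converting \cref{infgrass} into the generating-function form required to match \cref{controversy}, and in applying teleportation over a homogeneous basis of $A$ in the presence of nontrivial parities — but the underlying manipulations are routine once the sign conventions are fixed consistently.
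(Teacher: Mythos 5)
Your proposal is correct and follows essentially the same route as the paper's (very terse) proof: well-definedness and the image of $e_r(a)$ come from matching the infinite grassmannian relation \cref{infgrass} against the defining identity of \cref{controversy}, bijectivity comes from the basis theorem \cite[Th.~7.2]{BSW20}, and the image of $p_r(a)$ comes from expanding \cref{powerdef} and collapsing the sum over $\bB_A$ into a teleporter. The sign bookkeeping you flag does work out as you predict (the $(-1)^s$ from $e(ac;-u)$ against the $(-1)^{s-1}$ in the bubble assignment reproduces the $-\delta_{n,0}\tr(ab)$ of \cref{infgrass}, and the $(-1)^{s-1}$ in \cref{powerdef} cancels the one in $\beta_k(e_s)$ to leave the coefficient $s$).
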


\begin{proof}
    It follows from \cref{controversy,infgrass} that we have a well-defined homomorphism of graded algebras with image of $e_r(a)$ and $h_r(a)$ as in \cref{beta}.  The fact that $\beta_k$ is an isomorphism follows from the basis theorem \cite[Th.~7.2]{BSW20} for $\Heis_k(A)$.  The expression for the image of $p_r(a)$ follows from \cref{powerdef}.
\end{proof}

\begin{eg}
    We can identify $\Sym(\kk)$ with the ring of symmetric functions with coefficients in $\kk$ by identifying $e_n(1)$ with the $n$-th elementary symmetric function.  Then $h_n(1)$ is the $n$-th homogeneous symmetric function (this follows from \cite[(2.6)]{Mac95}) and $p_n(1)$ is the $n$-th power sum symmetric function (see \cite[p.~33]{Mac95}).
\end{eg}

Recalling the symmetries \cref{Omega,Phi}, we see that
\begin{gather} \label{Omsym}
    \beta_{-k}^{-1} \circ \Omega_k \circ \beta_k \colon
    e_r(a) \mapsto (-1)^{r-1} h_r(a),\quad
    h_r(a) \mapsto (-1)^{r-1} e_r(a),\quad
    p_r(a) \mapsto -p_r(a),
    \\ \label{Phsym}
    \beta_k^{-1} \circ \Phi_k \circ \beta_k = \id_{\Sym(A)}.
\end{gather}

It follows from the basis theorem \cite[Th.~7.2]{BSW20} for $\Heis_k(A)$ that the natural functor
\begin{equation} \label{imath}
    \imath \colon \AWC \to \Heis_k(A),
\end{equation}
taking $\uparrow$ to $\uparrow$ and mapping each string diagram in $\AWC$ to the same diagram in $\Heis_k(A)$, is a faithful functor of $\kk$-linear graded monoidal supercategories.  Thus, for $n \ge 1$, we have, from \cref{imathaff}, a homomorphism of graded superalgebras
\begin{equation} \label{imathn}
    \imath_n \colon \AWA_n(A) \to \End_{\Heis_k(A)}(\uparrow^{\otimes n}),
\end{equation}
which we also denote by $\imath_n$.

Similarly, viewing $A^\op$ as a graded Frobenius superalgebra with trace map $\tr$ being the same underlying linear map as for $A$, we have a faithful functor of $\kk$-linear graded monoidal supercategories
\begin{equation} \label{jmath}
  \jmath \colon \AWC[A^\op] \to \Heis_k(A),
\end{equation}
taking $\uparrow$ to $\downarrow$ and mapping each string diagram in $\AWC$ to the same diagram in $\Heis_k(A)$, multiplied by $(-1)^x$, where $x$ is the number of crossings in the diagram. Equivalently, for each $n \ge 1$ we have a homomorphism of graded algebras
\begin{equation} \label{jmathn}
  \jmath_n \colon \AWA_n(A^\op) \to \End_{\Heis_k(A)}(\downarrow^{\otimes n})
\end{equation}
sending $- s_i$ to the crossing of the $i$-th and $(i+1)$-st strings, $x_j$ to a dot on the $j$-th string, and $a^{(j)}$ to a token labelled $a$ on the $j$-th string.

\section{Trace of the Frobenius Heisenberg category}

In this section we prove our main result: a linear isomorphism between the trace of the Frobenius Heisenberg category and a central reduction of the Frobenius W-algebra.

For $n \ge 1$, $r \in \N$, and $a \in A$, define
\begin{align} \label{opal+}
    \hcL_{n,r}(a) &:= \imath_n \left( a^{(1)} \left(\frac{x_1 + x_2 + \dotsb + x_n}{n} \right)^r \rho_n \right), & \cL_{n,r} &:= \cocenter{\hcL_{n,r}} \in \Tr(\Heis_k(A)),
    \\ \label{opal-}
    \hcL_{-n,r}(a) &:= (-1)^{n-1} \jmath_n \left( a^{(1)} \left(\frac{x_1 + x_2 + \dotsb + x_n}{n} \right)^r \rho_n \right), & \cL_{-n,r} &:= \cocenter{\hcL_{-n,r}} \in \Tr(\Heis_k(A)),
\end{align}
where $\rho_n$ is the cycle $(1\, 2\, \cdots\, n) \in S_n$ and $a^{(1)}$ is defined as in \cref{aidef}.  (In fact, it follows from \cref{tokrel,affrel} that we could replace $a^{(1)}$ by $a^{(i)}$ for any $1 \le i \le n$.)  For $r \in \N$, $a \in C(A)$, we recursively define $\hcL_{0,r}(a)$ by
\begin{equation} \label{studded}
    \hcL_{0,r}(a) := \frac{-1}{r+1} \beta_k(p_{r+1}(a)) - \frac{1}{r+1} \sum_{i=1}^{\left\lfloor \frac{r}{2} \right\rfloor} \binom{r+1}{2i+1} \hcL_{0,r-2i} \left( \frac{a \kappa^i}{4^i} \right),
\end{equation}
and set $\cL_{0,r}(a) = \cocenter{\hcL_{0,r}(a)}$.  In particular, we have
\begin{align} \label{L00}
    \hcL_{0,0}(a)
    &= \ccbubble{a}{-k} = \cbubble{a}{k},
    \\ \label{L01}
    \hcL_{0,1}(a)
    &= \ccbubble{a}{1-k} - \tfrac{1}{2}\
    \begin{tikzpicture}[centerbase]
        \draw[->] (-0.4,0.2) arc(90:450:0.2);
        \draw[->] (0.4,0.2) arc(90:450:0.2);
        \teleport{-0.2,0}{0.2,0};
        \token{west}{0.6,0}{a};
        \multdot{north}{-0.4,-0.2}{-k};
        \multdot{north}{0.4,-0.2}{-k};
    \end{tikzpicture},
    \\ \label{L02}
    \hcL_{0,2}(a)
    &= \ccbubble{a}{2-k} -
    \begin{tikzpicture}[centerbase]
        \draw[->] (-0.4,0.2) arc(90:450:0.2);
        \draw[->] (0.4,0.2) arc(90:450:0.2);
        \teleport{-0.2,0}{0.2,0};
        \token{west}{0.6,0}{a};
        \multdot{north}{-0.4,-0.2}{-k};
        \multdot{north}{0.4,-0.2}{1-k};
    \end{tikzpicture}
    + \tfrac{1}{3}\
    \begin{tikzpicture}[centerbase]
        \draw[->] (0,0.2) arc(90:450:0.2);
        \draw[->] (-0.6,0.2) arc(90:450:0.2);
        \draw[->] (0.6,0.2) arc(90:450:0.2);
        \teleport{-0.4,0}{-0.2,0};
        \teleport{0.4,0}{0.2,0};
        \token{west}{0.8,0}{a};
        \multdot{north}{-0.6,-0.2}{-k};
        \multdot{north}{0,-0.2}{-k};
        \multdot{north}{0.6,-0.2}{-k};
    \end{tikzpicture}
    - \tfrac{1}{12} \ccbubble{a \kappa}{-k}.
\end{align}
Note that, for $n \in \Z$, $r \in \N$, and $a \in A$, the element $\hcL_{n,r}(a)$ depends only on $n$, $r$, and the image $\cocenter{a}$ of $a$ in $C(A)$.  So we can define $\hcL_{n,r}(\cocenter{a}) := \cL_{n,r}(a)$ and $\cL_{n,r}(\cocenter{a}) = \cL_{n,r}(a)$.

The superalgebra $\Tr(\Heis_k(A))$ inherits the degree grading from $\Heis_k(A)$.  In also has a \emph{rank} grading given by the total number of up strands minus the total number of down strands, and an \emph{order} filtration given by number of dots.  In particular, $\rank \cL_{m,r}(a) = m$ and $\cL_{m,r}(a)$ lives in order $\le r$.  (Compare to \cref{rankorder}.)

Recall the order $\gtrdot$ on $\Z \times \N \times \bB_{C(A)}$ from \cref{order}.  The main result of the current section is the following.

\begin{theo} \label{linisom}
    As a $\kk$-module, the algebra $\Tr(\Heis_k(A))$ has basis
    \begin{multline*}
        \{\cL_{n_1,r_1}(a_1) \cL_{n_2,r_2}(a_2) \dotsm \cL_{n_t,r_t}(a_t) :
        t \in \N,\ (n_i,r_i,a_i) \in \Z \times \N \times \bB_{C(A)},\\
        (n_1,r_1,a_1) \gtrdot (n_2,r_2,a_2) \gtrdot \dotsb \gtrdot (n_t,r_t,a_t)\}.
    \end{multline*}
    Therefore, the map
    \begin{equation} \label{zeta}
        \begin{aligned}
            \zeta_k \colon W(A)/(C-k) &\to \Tr(\Heis_k(A)),\\
            L_{n_1,r_1}(a_1) L_{n_2,r_2}(a_2) \dotsm L_{n_t,r_t}(a_t)
            &\mapsto \cL_{n_1,r_1}(a_1) \cL_{n_2,r_2}(a_2) \dotsm \cL_{n_t,r_t}(a_t),
        \end{aligned}
    \end{equation}
    $t \in \N$, $(n_1,r_1,a_1) \gtrdot (n_2,r_2,a_2) \gtrdot \dotsb \gtrdot (n_t,r_t,a_t)$,
    is an isomorphism of $\kk$-modules.  This map preserves the degree grading, rank grading, and order filtration.
\end{theo}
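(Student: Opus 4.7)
The plan is to prove the basis statement of \cref{linisom} in two stages---spanning first, then linear independence---after which $\zeta_k$ will automatically be a $\kk$-module isomorphism by comparison with the PBW basis of $W(A)/(C-k)$ from \cref{Wbasis}. That $\zeta_k$ preserves the degree grading, rank grading, and order filtration is immediate from the construction of $\hcL_{n,r}(a)$ and \cref{rankorder}. Both stages will be executed at the level of the locally unital graded superalgebra $R$ underlying $\Heis_k(A)$, exploiting \cref{snowshoe} (or rather an obvious extension with the index set $\N$ replaced by $\N \times \N$) together with the basis theorem \cite[Th.~7.2]{BSW20}.

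For spanning, I would take as idempotents the identities of the objects $\uparrow^{\otimes m} \otimes \downarrow^{\otimes n}$, $(m,n) \in \N \times \N$; the subsets $D_{(m',n'),(m,n)}$ and $D_{(m,n)}$ satisfying the analogs of \eqref{B1}, \eqref{B2} are supplied by the basis theorem, with the diagonal algebra $D_{(m,n)}$ identified via $\imath_m$ and $\jmath_n$ (see \cref{imathn,jmathn}) with a basis of $\AWA_m(A) \otimes \AWA_n(A^{\op})$, and the off-diagonal pieces corresponding to diagrams that factor through smaller objects through cups and caps. The ensuing identification expresses $\Tr(\Heis_k(A))$ as an assembly involving $\bigoplus_{m,n \in \N} C\bigl( \AWA_m(A) \otimes \AWA_n(A^{\op}) \bigr)$ glued to the rank-zero piece $\End_{\Heis_k(A)}(\one) \cong \Sym(A)$ of \cref{coconut}. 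I would then compute $C(\AWA_n(A))$ directly: via the decomposition $\AWA_n(A) = P_n(A) \otimes \kk S_n$, the cyclic property of $C$, and the commutation rule \cref{sif}, every class can be rewritten as a product, taken over the disjoint cycles of $\sigma \in S_n$, of cycle classes $\bigl\langle a^{(1)} \bigl( \tfrac{x_1 + \cdots + x_m}{m} \bigr)^r \rho_m \bigr\rangle$. Through the trace multiplication \cref{tracemult}, such a product is precisely $\cL_{n_1,r_1}(a_1) \cdots \cL_{n_t,r_t}(a_t)$ (with all $n_i > 0$), and symmetrically for negative ranks after exchanging $A$ and $A^{\op}$. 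The rank-zero monomials are accounted for by $\cL_{0,r}(a)$ via \cref{coconut} and the recursive definition \cref{studded}.

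For linear independence, I would pass to the associated graded with respect to the rank grading and the order filtration \cref{rankorder}, on which cross terms between distinct cycle types and bubble multiplicities vanish; independence then reduces to (i) independence inside each $C(\AWA_n(A))$ and its $A^{\op}$ analog, and (ii) independence among rank-zero polynomial monomials (the latter clear from \cref{coconut} since $\Sym(A)$ is a polynomial superalgebra). I would handle case (i) via the action of $\Tr(\Heis_k(A))$ on $\End_{\Heis_k(A)}(\one)$ obtained by closing strands off with cups and caps: distinct cycle classes produce distinct symmetric-function operators on $\Sym(A)$, distinguishable through their leading order in the dot filtration, and a Hilbert-series comparison with \cref{Wbasis} confirms the counts match. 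The main obstacle is precisely this computation of $C(\AWA_n(A))$: it is the new derivation of the degenerate affine Hecke algebra's cocenter advertised in the introduction, replacing the structural methods of Solleveld used in \cite{CLLS18}, and carrying it out requires careful diagrammatic accounting of how tokens and dots propagate through closed annular diagrams in $\Heis_k(A)$, respecting the twisted Leibniz identity for $\partial_i$ and the sign conventions for teleporters \cref{brexit,tokteleport}.
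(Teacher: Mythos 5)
Your architecture coincides with the paper's in its main lines: a triangular decomposition of the trace into negative, positive, and bubble parts (the paper's \cref{yurt}, which is proved by first splitting $\Heis_k(A)$ into fixed-rank blocks $\cC^{(m)}$ and applying \cref{snowshoe} with a single $\N$-chain of idempotents, rather than your $\N\times\N$ indexing; note also that the basis theorem forces the order $\downarrow^{\otimes m}\otimes\uparrow^{\otimes n}$ when $k\ge 0$ and the reverse when $k\le 0$), then a spanning argument for $C(\AWA_n(A))$ by conjugating to cycle type and pushing dots and tokens to one strand of each cycle (the paper's \cref{bridge1,bridge2}), and finally linear independence via the map that closes diagrams off to the right into $\End_{\Heis_k(A)}(\one)\cong\Sym(A)$ (the paper's $\theta_k$ and \cref{treble}). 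The spanning half of your argument is essentially the paper's.

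The gap is in the linear independence step. Closing off a single $n$-cycle carrying $r$ dots and a token $a$ yields $\pm h_{r-nk+1}(a)$ plus higher-order terms (\cref{treble}), and for small $|k|$ this leading term does \emph{not} determine the pair $(n,r)$. For example, at $k=0$ the monomials $\cL'_{2,1}(a)\,\cL'_{1,0}(b)$ and $\cL'_{1,1}(a)\,\cL'_{2,0}(b)$ both lie in $C(\AWA_3(A))$, have the same rank, the same total number of dots, and the same leading term $\pm h_2(a)h_1(b)$ under $\theta_0$; so your claim that ``distinct cycle classes produce distinct symmetric-function operators \dots\ distinguishable through their leading order in the dot filtration'' fails as stated. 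The missing idea is that $C(\AWA_n(A))$ and the spanning set are independent of the central charge, so to test a given finite family for independence one may embed $\AWC$ into $\Heis_{k'}(A)$ with $-k'$ larger than every $r_i$ occurring; only then do the leading terms $h_{r_i-n_ik'+1}(a_i)$ separate the monomials (\cref{canopy}; this is exactly the point emphasized in \cref{Solleveld}). Your fallback, a ``Hilbert-series comparison with \cref{Wbasis},'' is circular: the graded dimension of $\Tr(\Heis_k(A))$ (equivalently, of each $C(\AWA_n(A))$) is precisely what is being computed, and a spanning set of the right cardinality can be certified a basis only once that dimension is independently known. The rank-zero part of your argument (via \cref{coconut}, \cref{studded}, and the triangularity in \cref{rake,jungle}) is fine.
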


The proof of \cref{linisom} will occupy the remainder of this section.  Our first step is to give a triangular decomposition of $\Tr(\Heis_k(A))$.

Since the maps $\imath_n$ and $\jmath_n$ defined in \cref{imath,jmath} are homomorphisms of graded algebras, they induce homomorphisms of graded $\kk$-modules
\begin{align*}
    \cocenter{\imath_n} &\colon C \left( \AWA_n(A) \right) \to \Tr(\Heis_k(A)),\quad
    \cocenter{\imath_n}(\cocenter{y}) = \cocenter{\imath_n(y)},\ y \in \AWA_n(A), \\
    \cocenter{\jmath_n} &\colon C \left( \AWA_n(A^\op) \right) \to \Tr(\Heis_k(A)),\quad
    \cocenter{\jmath_n}(\cocenter{y}) = \cocenter{\jmath_n(y)},\ y \in \AWA_n(A^\op).
\end{align*}
Define
\begin{align*}
    \cocenter{\imath}
    &:= \left( \cocenter{\imath_n} \right)_{n \in \N} \colon \bigoplus_{n \in \N} C \left( \AWA_n(A) \right) \to \Tr(\Heis_k(A)),
    \\
    \cocenter{\jmath}
    &:= \left( \cocenter{\jmath_n} \right)_{n \in \N} \colon \bigoplus_{n \in \N} C \left( \AWA_n(A^\op) \right) \to \Tr(\Heis_k(A)).
\end{align*}
In addition, since $\Sym(A)$ is supercommutative, hence canonically isomorphic to its cocenter, we have a homomorphism of graded $\kk$-modules
\[
    \cocenter{\beta_k} \colon \Sym(A) \to \Tr(\Heis_k(A)),\quad
    \cocenter{\beta_k}(f) = \cocenter{\beta_k(f)},\quad f \in \Sym(A).
\]

\begin{prop} \label{yurt}
    We have isomorphisms of graded $\kk$-modules
    \begin{align*}
        \cocenter{\jmath} \otimes \cocenter{\imath} \otimes \cocenter{\beta_k} \colon \bigoplus_{m,n \in \N} C \left( \AWA_m(A^\op) \right) \otimes C \left( \AWA_n(A) \right) \otimes \Sym(A) &\xrightarrow{\cong} \Tr(\Heis_k(A)),\quad \text{if } k \ge 0, \\
        \cocenter{\imath} \otimes \cocenter{\jmath} \otimes \cocenter{\beta_k} \colon \bigoplus_{m,n \in \N} C \left( \AWA_m(A) \right) \otimes C \left( \AWA_n(A^\op) \right) \otimes \Sym(A) &\xrightarrow{\cong} \Tr(\Heis_k(A)), \quad \text{if } k \le 0.
    \end{align*}
\end{prop}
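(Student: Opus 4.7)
The plan is to combine the basis theorem \cite[Th.~7.2]{BSW20} for $\Heis_k(A)$ with an $\N^2$-indexed version of \cref{snowshoe}. I focus on $k \ge 0$; the case $k \le 0$ then follows from the isomorphism \cref{Omega} together with \cref{sand}.

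First, using Karoubi invariance of the trace from \cite[Prop.~3.2]{BGHL14} and the inversion relation \cref{invrel}, every object of $\Kar(\Heis_k(A))$ is isomorphic to a direct sum of objects $X_{m,n} := \downarrow^{\otimes m} \otimes \uparrow^{\otimes n}$, $(m,n) \in \N^2$. Thus $\Tr(\Heis_k(A))$ may be computed from the locally unital $\kk$-superalgebra
\[
  R = \bigoplus_{(m,n),(m',n') \in \N^2} \Hom_{\Heis_k(A)}(X_{m,n}, X_{m',n'}).
\]
The basis theorem then supplies, for each hom-space, a $\kk$-basis of diagrams parametrized by: a pair $(m_0,n_0) \le (\min(m,m'), \min(n,n'))$ describing how many external down and up strands are matched in parallel; an element of $\AWA_{m_0}(A^\op)$ acting on the $m_0$ parallel down strands via $\jmath_{m_0}$; an element of $\AWA_{n_0}(A)$ acting on the $n_0$ parallel up strands via $\imath_{n_0}$; an element of $\Sym(A)$ realized as a closed bubble diagram via $\beta_k$; and cups/caps joining the remaining external strands. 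For endomorphisms of $X_{m,n}$, the diagonal subset $(m_0,n_0)=(m,n)$ furnishes exactly the image of the injective map $\jmath_m \otimes \imath_n \otimes \beta_k$, while every other basis element factors through $X_{m_0,n_0}$ for some strictly smaller $(m_0,n_0) < (m,n)$ in the componentwise partial order on $\N^2$.

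Next I apply the arguments of \cref{sled,ginger,crab,snowshoe} verbatim with $\N$ replaced by $\N^2$ under the componentwise partial order; the proofs rely only on the poset being well-founded, which $\N^2$ is. This yields a $\kk$-module isomorphism
\[
  \bigoplus_{m,n \in \N} C\bigl( \AWA_m(A^\op) \otimes \AWA_n(A) \otimes \Sym(A) \bigr) \xrightarrow{\cong} \Tr(\Heis_k(A)).
\]
Since $\Sym(A)$ is supercommutative, $C(\Sym(A)) = \Sym(A)$, and applying the K\"unneth-type identity $C(S \otimes_\kk T) \cong C(S) \otimes_\kk C(T)$ for graded superalgebras twice identifies each summand with $C(\AWA_m(A^\op)) \otimes C(\AWA_n(A)) \otimes \Sym(A)$, giving the stated decomposition.

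The main obstacle is verifying that every non-diagonal basis element of $\End(X_{m,n})$ genuinely factors through some $X_{m_0,n_0}$ with $(m_0,n_0) < (m,n)$; although this is essentially encoded in the shape of the basis from \cite[Th.~7.2]{BSW20}, some diagrammatic care is needed to rewrite an arbitrary such basis element in explicit ``factored form,'' using the squish relations \cref{squish} and the alternating braid relation \cref{altbraid} to isolate the relevant cups and caps. A secondary technical point is the precise check that the axioms \eqref{B1} and \eqref{B2} (suitably reformulated for $\N^2$) hold for the chosen diagrammatic basis, so that \cref{snowshoe} may indeed be invoked.
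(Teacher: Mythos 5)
Your outline is essentially sound and lands on the same two pillars as the paper's proof (the inversion relation \cref{invrel} to reduce to objects $\downarrow^{\otimes m}\otimes\uparrow^{\otimes n}$, and the abstract cocenter machinery of \cref{coextro} fed by the basis theorem of \cite[Th.~7.2]{BSW20}), but it diverges at one organizational step, and that divergence is where the extra work you flag comes from. The paper does not generalize \cref{snowshoe} to $\N^2$: instead it observes that the full subcategory $\cC$ on the objects $\downarrow^{\otimes m}\otimes\uparrow^{\otimes n}$ splits as $\cC=\bigoplus_{m\in\Z}\cC^{(m)}$, where $\cC^{(m)}$ contains the objects $\downarrow^{\otimes(m+n)}\otimes\uparrow^{\otimes n}$, because hom-spaces between objects of different rank $n-m$ vanish. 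Within each $\cC^{(m)}$ the objects form a chain indexed by $n\in\N$, so \cref{sled,ginger,crab,snowshoe} apply exactly as stated. Your $\N^2$ route would also work, but the claim that those lemmas go through ``verbatim'' under the componentwise partial order is too quick -- their proofs use the total order on $\N$ in an essential-looking way (e.g.\ \cref{crab} chooses ``the largest $N$'' with a nonzero coefficient, and \cref{ginger} inducts along the chain $0,1,\dotsc,N$), so one would have to rewrite them for a well-founded poset with meets; the rank decomposition renders this unnecessary. For the point you identify as the main obstacle, the paper supplies the missing construction explicitly: it builds the sets $D_{n_2,n_1}$ and $D_n$ from reduced lifts of $(X_{n_1},X_{n_2})$-matchings (no cups when $n_1>n_2$, no caps when $n_1<n_2$, dots and tokens placed near the termini of strands involved in caps/cups, and a basis of $\End_{\Heis_k(A)}(\one)$ to the right), and checks \eqref{B1} and \eqref{B2} directly against \cite[Th.~7.2]{BSW20}, with $R_n=\jmath(\AWA_{m+n}(A^\op))\otimes\imath(\AWA_n(A))\otimes\End_{\Heis_k(A)}(\one)$; the only subtlety is that the cited basis theorem places dots near the termini of through-strands rather than in the middle of the diagram, which is reconciled by sliding dots through crossings modulo lower-order terms. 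Your K\"unneth step $C(S\otimes T)\cong C(S)\otimes C(T)$ and your treatment of $k\le 0$ via $\Omega_k$ and \cref{sand} are both fine (the paper simply declares that case analogous).
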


\begin{proof}
    Suppose $k \ge 0$, the case $k \le 0$ being analogous.  It follows from \cref{invrel} that every object of $\Heis_k(A)$ is isomorphic to a direct sum of objects of the form $\downarrow^{\otimes m} \otimes \uparrow^{\otimes n}$, $m,n \in \N$.  Let $\cC$ be the full subcategory of $\Heis_k(A)$ whose objects are $\downarrow^{\otimes m} \otimes \uparrow^{\otimes n}$, $m,n \in \N$.  Then, by \cite[Lem.~2.1]{BHLW17}, we have $\Tr(\Heis_k(A)) \cong \Tr(\cC)$.

    Now, for $m \in \Z$, let $\cC^{(m)}$ be the full subcategory of $\Heis_k(A)$ whose objects are $\downarrow^{\otimes (m+n)} \otimes \uparrow^{\otimes n}$ for $n \ge \max(0,-m)$.  Then there are no morphisms between objects in $\cC^{(m)}$ and $\cC^{(n)}$ for $m \ne n$.  In other words, $\cC = \bigoplus_{m \in \Z} \cC^{(m)}$.  Hence $\Tr(\cC) = \bigoplus_{m \in \Z} \Tr(\cC^{(m)})$.

    As in \cref{traces}, we can view $\cC^{(m)}$ as a locally unital graded superalgebra $R$ with system of idempotents $\{1_n:= 1_{X_n} : n \ge \max(0,-m)\}$, where $X_n := \downarrow^{\otimes (m+n)} \otimes \uparrow^{\otimes n}$.  For each $n_1,n_2 \ge \max(0,-m)$, an \emph{$(X_{n_1},X_{n_2})$-matching} is a bijection between the sets
    \[
        \{\uparrow \text{ in } X_{n_1} \} \sqcup \{\downarrow \text{ in } X_{n_2}\}
        \quad \text{and} \quad
        \{\downarrow \text{ in } X_{n_1} \} \sqcup \{\uparrow \text{ in } X_{n_2}\}.
    \]
    A \emph{reduced lift} of an $(X_{n_1},X_{n_2})$-matching is a string diagram representing a morphism $X_{n_1} \to X_{n_2}$ such that
    \begin{itemize}
        \item the endpoints of each string are points which correspond under the given matching;
        \item there are no floating bubbles and no dots or tokens on any string;
        \item there are no self-intersections of strings and no two strings cross each other more than once.
    \end{itemize}
    Fix a set $D(n_2,n_1)$ consisting of a choice of reduced lift for each $(X_{n_1},X_{n_2})$-matching.  Then choose a basis $\bB_\one$ of $\End_{\Heis_k(A)}(\one)$. Now, for $n,n_1,n_2 \ge \max(0,-m)$, define the following:
    \begin{itemize}
        \item If $n_1 > n_2$, let $D_{n_2,n_1}$ denote the set of all morphisms that can be obtained from the elements of $D(n_2,n_1)$ \emph{containing no cups} by adding to each string \emph{involved in a cap} a nonnegative number of dots near the terminus of the string and one element of $\bB_A$ to the string.
        \item If $n_1 < n_2$, let $D_{n_2,n_1}$ denote the set of all morphisms that can be obtained from the elements of $D(n_2,n_1)$ \emph{containing no caps} by adding to each string \emph{involved in a cup} a nonnegative number of dots near the terminus of the string and one element of $\bB_A$ to the string.
        \item Let $D_{n,n} = \{1_{X_n}\}$.
        \item Let $D_n$ denote the set of all morphisms that can be obtained from the elements of $D(n,n)$ \emph{containing no cups or caps} by adding to each string a nonnegative number of dots near the terminus of the string and one element of $\bB_A$ to the string, and then placing an element of $\bB_\one$ to the right of all the strings.
    \end{itemize}
    It follows from the basis theorem \cite[Th.~7.2]{BSW20} for $\Heis_k(A)$ that the sets $D_{n_2,n_1}$, $D_n$ satisfy the conditions \eqref{B1} and \eqref{B2} from \cref{coextro}, where
    \[
        R_n = \jmath \left( \AWA_{m+n}(A^\op) \right) \otimes \imath \left( \AWA_n(A) \right) \otimes \End_{\Heis_k(A)}(\one).
    \]
    In fact, in \cite[Th.~7.2]{BSW20}, the basis elements of $\Heis_k(A)$ have dots near the termini of the strands connecting the top and bottom of the diagram, whereas here we place the dots on such strands in the middle of the diagram, above all the caps and below all the cups.  But the proof of  \cite[Th.~7.2]{BSW20} goes through with this modification, or we can use the fact that dots slide through crossings modulo diagrams with fewer dots.  Now the proposition follows from \cref{snowshoe}.
\end{proof}

In light of \cref{yurt}, in order to compute $\Tr(\Heis_k(A))$ as a graded $\kk$-supermodule, it suffices to compute $C(\AWA_n(A))$.  By \cref{beatdown}, we may instead compute $\Tr(\AWC)$.

For $n \ge 1$, $r \in \N$, and $a \in A$, define, via the map \cref{imathaff},
\[
    \hcL'_{n,r}(a) := \imath_n \left( a^{(1)} x_1^r \rho_n \right)
    =
    \begin{tikzpicture}[anchorbase]
        \draw[->] (-1,-0.3) \braidto (-0.1,0.3) to (-0.1,0.7);
        \draw[->] (-0.8,-0.3) \braidto (-1,0.3) to (-1,0.7);
        \draw[->] (-0.3,-0.3) \braidto (-0.5,0.3) to (-0.5,0.7);
        \draw[->] (-0.1,-0.3) \braidto (-0.3,0.3) to (-0.3,0.7);
        \node at (-0.71,0.4) {$\cdots$};
        \multdot{west}{-0.1,0.3}{r};
        \token{west}{-0.1,0.5}{a};
    \end{tikzpicture}
    \in \End_{\AWC}(\uparrow^{\otimes n}),\quad
    \cL'_{n,r}(a) := \cocenter{\hcL'_{n,r}(a)} \in \Tr(\AWC).
\]
As for $\cL_{n,r}(a)$, the element $\cL'_{n,r}(a)$ depends only on the image $\cocenter{a}$ of $a$ in $C(A)$.  So we may define $\cL'_{n,r}(\cocenter{a}) := \cL'_{n,r}(a)$.

\begin{prop} \label{bridge1}
    The trace $\Tr(\AWC)$ is spanned over $\kk$ by the elements
    \begin{multline} \label{honey1}
        \cL'_{n_1,r_1}(a_1) \cL'_{n_2,r_2}(a_2) \dotsb \cL'_{n_t,r_t}(a_t),\quad t \in \N,\ (n_i,r_i,a_i) \in \Z_{>0} \times \N \times \bB_{C(A)}, \\
        (n_1,r_1,a_1) \gtrdot (n_2,r_2,a_2) \gtrdot \dotsb \gtrdot (n_t,r_t,a_t).
    \end{multline}
\end{prop}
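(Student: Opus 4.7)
The plan is to pass through the identification $\bigoplus_{n \in \N} C(\AWA_n(A)) \cong \Tr(\AWC)$ from \cref{beatdown} and span each $C(\AWA_n(A))$ by classes of products $\hcL'_{n_1,r_1}(a_1) \dotsm \hcL'_{n_t,r_t}(a_t)$ with $n_1 + \dotsb + n_t = n$. Since $\AWA_n(A) \cong A^{\otimes n} \otimes \kk[x_1,\dotsc,x_n] \otimes \kk S_n$ as a $\kk$-module, it suffices to rewrite the class $\cocenter{\ba f \sigma}$ of an arbitrary PBW monomial in the required form. I would induct on the pair $(\deg_{\mathrm{pol}}(f), \ell(\sigma))$ ordered lexicographically, so that all error terms produced by the rewriting steps below are absorbed by the induction hypothesis.

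The first main step standardizes $\sigma$. Supercyclic invariance of the cocenter gives $\cocenter{\ba f \sigma} = \pm \cocenter{\pi \cdot \ba f \sigma \cdot \pi^{-1}}$ for any $\pi \in S_n$, and the defining relation \cref{sif} allows such conjugations to be carried out at the cost of lower-polynomial-degree corrections. This reduces $\sigma$ to a product of disjoint cycles $\rho^{(1)} \dotsm \rho^{(t)}$, where $\rho^{(i)}$ is the standard long cycle on a contiguous block of size $n_i$. The element then factors across these blocks up to a correction of strictly smaller induction parameter.

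The second step, performed block by block, uses the teleportation property \cref{travel} together with cyclicity to slide all tokens of $\ba$ supported on the $i$-th block into a single token $a_i^{(1)}$ on its first strand, and to collapse the polynomial dependence on $x_{m_{i-1}+1}, \dotsc, x_{m_i}$ to a single power $x_{m_{i-1}+1}^{r_i}$. Here one repeatedly uses that $\cocenter{x_{j+1}^r \ba \rho^{(i)}} \equiv \cocenter{x_j^r \ba \rho^{(i)}}$ modulo lower polynomial degree, arising from cycling $x_j^r$ through $\rho^{(i)}$ via \cref{sif}. The element becomes $\cocenter{\hcL'_{n_1,r_1}(a_1) \dotsm \hcL'_{n_t,r_t}(a_t)}$, which equals $\cL'_{n_1,r_1}(a_1) \dotsm \cL'_{n_t,r_t}(a_t)$ under the trace multiplication \cref{tracemult}. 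A final supercyclic rearrangement puts the factors in $\gtrdot$-decreasing order.

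The main obstacle is the bookkeeping of the error terms generated at each rewriting step: the Demazure correction in \cref{sif} lowers polynomial degree but can raise permutation length, while the cyclic moves used in the second step can do the reverse. One must choose a well-founded induction parameter—I expect the lexicographic pair $(\deg_{\mathrm{pol}}(f), \ell(\sigma))$ to suffice—so that each rewrite strictly decreases this quantity modulo the span of the proposed $\cL'$-products. Tracking the super signs coming from \cref{travel} and the teleportation calculus is an additional, routine bookkeeping task.
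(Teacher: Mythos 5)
Your proposal follows essentially the same route as the paper: identify $\Tr(\AWC)$ with $\bigoplus_n C(\AWA_n(A))$, induct on polynomial degree using the filtration \cref{tart}, conjugate the permutation part into a product of disjoint cycles (absorbing the Demazure corrections from \cref{sif} into lower-order terms), and then slide all tokens and dots within each cycle onto a single strand modulo terms with fewer dots. The paper's induction is simply on polynomial degree rather than your lexicographic pair; since conjugation preserves polynomial degree and the standardized main term is handled directly rather than by a further inductive appeal, the second coordinate is not needed, but your parameter causes no harm.

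There is one step you omit that is genuinely required by the statement. Your final move is a ``supercyclic rearrangement'' putting the factors in $\gtrdot$-decreasing order, but $\gtrdot$ (see \cref{order}) permits a repeated triple $(n,r,a)$ only when $a$ is even. A product containing two equal factors $\cL'_{n,r}(a)\cL'_{n,r}(a)$ with $a$ odd cannot be brought into the form \cref{honey1} by reordering; instead one must show it vanishes in the trace. This follows from the observation that for odd $a$,
\[
    \hcL'_{n,r}(a)\, \hcL'_{n,r}(a) = \tfrac{1}{2}\left[\hcL'_{n,r}(a), \hcL'_{n,r}(a)\right] \in \left[ \AWA_n(A), \AWA_n(A) \right],
\]
which is the closing line of the paper's proof. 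Without it, your spanning set would have to be enlarged to allow weakly $\vartriangleright$-decreasing sequences with arbitrary repetitions, which is not what \cref{honey1} asserts.
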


\begin{proof}
    We will suppress the map $\imath_n$ and identify $\AWA_n(A)$ with $\End_{\AWC}(\uparrow^{\otimes n})$.  Since \linebreak $\Hom_{\AWC}(\uparrow^{\otimes m}, \uparrow^{\otimes n}) = 0$ for $m \ne n$, it suffices to fix $n \in \N$, and show that every element of $C(\AWA_n(A))$ is a linear combination of elements of the form \cref{honey1}.

    We proceed by induction on the filtration \cref{tart}.  First consider $\AWA_n(A)_0 = A^{\otimes n} \rtimes S_n$.  Let $\ba \in A^{\otimes n}$ and $\pi \in S_n$.  Conjugating by the appropriate permutation, we have that $\cocenter{\ba \pi} = \cocenter{\ba_1 \rho_{n_1}} \dotsm \cocenter{\ba_t \rho_{n_t}}$ for some $\ba_1, \dotsc, \ba_t \in A^{\otimes n}$ and $n_1 \ge \dotsb n_t$.  Then, using the fourth relation in \cref{tokrel}, together with the trace relation, we can move all tokens to the rightmost string in each cycle.  This completes the argument for $\AWA_n(A)_0$.

    Now suppose that $l \ge 1$ and that $C \left( \AWA_n(A)_{\le l-1} \right)$ is spanned by elements of the form \cref{honey1}.  Suppose $f \in P_n(A)$ has polynomial degree $r$ and $\pi \in S_n$.  Again, conjugating by an appropriate permutation, we can assume $\pi$ is a product of cycles of weakly decreasing length, modulo terms in $\AWA_n(A)_{l-1}$.  (Here we use the fact that dots slide through crossings, modulo terms with fewer dots.)  Then we move all tokens to the rightmost strand in each cycle.  Finally, we slide all dots to the rightmost strand, again using the fact that dots slide through crossing modulo terms with fewer dots.

    Finally, if $a$ is odd, then
    \[
        \hcL'_{n,r}(a) \hcL'_{n,r}(a) = \frac{1}{2} [\hcL'_{n,r}(a), \hcL'_{n,r}(a)]
        \in \left[ \AWA_n(A), \AWA_n(A) \right]. \qedhere
    \]
\end{proof}

\begin{cor} \label{bridge2}
    The trace $\Tr(\AWC)$ is spanned over $\kk$ by the elements
    \begin{multline} \label{honey2}
        \cL_{n_1,r_1}(a_1) \cL_{n_2,r_2}(a_2) \dotsb \cL_{n_t,r_t}(a_t),\quad t \in \N,\ (n_i,r_i,a_i) \in \Z_{>0} \times \N \times \bB_{C(A)}, \\
        (n_1,r_1,a_1) \gtrdot (n_2,r_2,a_2) \gtrdot \dotsb \gtrdot (n_t,r_t,a_t).
    \end{multline}
\end{cor}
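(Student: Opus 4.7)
Plan:  By Proposition \ref{bridge1}, products of the form $\cL'_{n_1,r_1}(a_1)\cdots\cL'_{n_t,r_t}(a_t)$ already span $\Tr(\AWC)$.  I would deduce Corollary \ref{bridge2} by proving a triangularity statement converting each $\cL'_{n,r}(a)$ into $\cL_{n,r}(a)$ modulo ``shorter'' products, and then handling the $\gtrdot$-ordering by a parallel argument.

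The key claim is that for every $n\geq 1$, $r\in\N$, and $a\in\bB_{C(A)}$,
\begin{equation*}
    \cL'_{n,r}(a)
    \in \kk \cL_{n,r}(a)+\sum_{\ell\geq 2}\ \sum_{\substack{m_1+\dotsb+m_\ell=n\\ m_i<n}}\ \kk\cdot\cL_{m_1,s_1}(b_1)\dotsm\cL_{m_\ell,s_\ell}(b_\ell).
\end{equation*}
To establish it, I would expand $\cL_{n,r}(a)-\cL'_{n,r}(a)=\cocenter{a^{(1)}\bigl(((x_1+\dotsb+x_n)/n)^r-x_1^r\bigr)\rho_n}$ as a sum over monomials $x^\alpha$, and repeatedly apply the dot-slide relation \cref{sif} (in the form $s_jx_j=x_{j+1}s_j-\tau_j$) together with cyclic conjugation by powers of $\rho_n$.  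Each application of the dot-slide replaces a crossing $s_j$ in the $n$-cycle $\rho_n$ with a teleporter $\tau_j$, which disconnects that cycle into two disjoint cycles of lengths summing to $n$; hence the resulting term factors through an object $\uparrow^{\otimes n_1}\otimes\uparrow^{\otimes n_2}$, and, by \cref{tracemult}, its image in $\Tr(\AWC)$ is a product of two classes coming from smaller affine wreath product algebras.  Iterating, every correction term is a $\kk$-linear combination of products of $\cL_{m_i,s_i}(b_i)$'s with each $m_i<n$.

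The corollary then follows by induction on $n$.  The base case $n=1$ is immediate since $\rho_1=\id$ forces $\cL'_{1,r}(a)=\cL_{1,r}(a)$.  For the inductive step, the claim rewrites $\cL'_{n,r}(a)$ as $\cL_{n,r}(a)$ plus smaller-$n$ products of $\cL$'s; by induction the latter lie in the span of \cref{honey2}, and $\cL_{n,r}(a)$ is itself a one-factor element of \cref{honey2} (after decomposing $a$ in $\bB_{C(A)}$).  Substituting each factor in a given product of $\cL'$'s and expanding produces a sum of (not necessarily ordered) products of $\cL_{n,r}(b)$'s.  A parallel computation in $\AWA_{m+n}(A)$ (conjugating $\cL_{m,r_1}(a_1)\otimes\cL_{n,r_2}(a_2)$ by the block-swap permutation, and pushing that permutation through any dots via \cref{sif,tokteleport}) shows that swapping two adjacent factors introduces only correction terms in which the total number of strands in some factor strictly decreases.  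Thus any product can be reordered into $\gtrdot$-decreasing form, with squares of odd $\cL_{n,r}(a)$ absorbed (via supercommutation) into lower-order terms, and all corrections are again handled by the same induction.

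The principal obstacle is purely combinatorial bookkeeping: verifying, through repeated applications of the affine wreath product relations \cref{sif,upslides1,upslides2,tokteleport}, that every correction arising either from the triangularity step or from a block-swap reordering truly factors as a product of $\cL$'s with strand counts strictly smaller than the starting $n$.  This is where the interplay between the dot, token, and crossing relations of $\AWC$ enters most delicately, but no new ingredient beyond those relations is required.
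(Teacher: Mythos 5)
Your overall strategy---deduce \cref{bridge2} from \cref{bridge1} via a triangular change of spanning set, using the fact that sliding a dot through a crossing produces a teleporter correction---is the same as the paper's, and your analysis of the first type of correction is right: replacing a crossing of $\rho_n$ by a teleporter does split the $n$-cycle into two shorter cycles, so $\cL_{n,r}(a)-\cL'_{n,r}(a)$ is a combination of classes of diagrams whose underlying permutations are products of strictly shorter cycles. The gap is in your choice of induction parameter. You induct on the cycle length $n$ and assert that the corrections arising from reordering a product (conjugating by the block-swap permutation) are terms ``in which the total number of strands in some factor strictly decreases.'' This is false. When you push the block swap $w$ through the dots via \cref{sif}, each correction term replaces one dot by a teleporter and one letter of a reduced word for $w$ disappears; the resulting underlying permutation is $w'\pi w^{-1}$ for a proper subword $w'$ of $w$, which need not be a product of disjoint cycles of the original lengths at all---and in the cases where it does still factor, the factors have exactly the same numbers of strands as before, just one fewer dot. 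So the strand counts do not decrease under reordering, and your induction on $n$ is not well-founded: the reordering corrections feed back terms of the same total strand count and the same (or coarser) cycle type, and your argument has no terminating measure for them. The same issue resurfaces in your first step, since the shorter cycles produced by breaking $\rho_n$ carry dots and tokens in arbitrary positions, and converting them into genuine $\cL_{m,s}(b)$'s requires another round of dot-sliding whose termination again is not controlled by strand counts alone.

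The quantity that actually decreases under \emph{every} correction---dot-sliding within a cycle, cyclic conjugation, and block reordering alike---is the total number of dots, i.e.\ the filtration \cref{tart}. This is what the paper uses: it orders the index set of \cref{honey1,honey2} compatibly with $\sum_i r_i$ and observes that, since the spanning argument of \cref{bridge1} never increases the number of dots, the transition matrix from the elements \cref{honey1} to the elements \cref{honey2} is unitriangular for that order, whence \cref{honey2} spans because \cref{honey1} does. Your argument can be repaired by replacing the induction on $n$ with this induction on the number of dots (or a lexicographic induction in which the dot count is the primary parameter); as written, the step handling reordering does not go through.
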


\begin{proof}
    As in the proof of \cref{bridge1}, it suffices to show that every element of $C(\AWA_n(A))$ is a linear combination of elements of the form \cref{honey2}.  Choose an order on the set
    \[
        \left\{ \big( (n_1,r_1,a_1), \dotsc, (n_t,r_t,a_t) \big) : t \in \N,\ (n_i,r_i,a_i) \in \Z_{>0} \times \N \times \bB_{C(A)},\ n_1 + \dotsb + n_t = n \right\}
    \]
    compatible with the partial order given by the total number $\sum_i r_i$ of dots.  This order induces an order on the corresponding elements \cref{honey1,honey2}.  Since dots on cycles can be slid to the rightmost strand modulo diagrams with fewer dots, we see that the transition matrix from the elements \cref{honey1} to \cref{honey2} is unitriangular.  Hence the corollary follows from \cref{bridge1}.
\end{proof}

For $n \ge 1$, define the nested right cups and left caps
\[
    \teacup_n :=
    \begin{tikzpicture}[anchorbase]
        \draw[<-] (-0.2,0.1) -- (-0.2,0) arc(180:360:0.2) -- (0.2,0.1);
        \draw[<-] (-0.8,0.1) -- (-0.8,0) arc(180:360:0.8) -- (0.8,0.1);
        \node at (0,-0.4) {$\vdots$};
    \end{tikzpicture}
    \colon \one \to \uparrow^{\otimes n} \otimes \downarrow^{\otimes n},
    \quad
    \teacap_n :=
    \begin{tikzpicture}[anchorbase]
        \draw[->] (-0.2,-0.1) -- (-0.2,0) arc(180:0:0.2) -- (0.2,-0.1);
        \draw[->] (-0.8,-0.1) -- (-0.8,0) arc(180:0:0.8) -- (0.8,-0.1);
        \node at (0,0.6) {$\vdots$};
    \end{tikzpicture}
    \colon \uparrow^{\otimes n} \otimes \downarrow^{\otimes n} \to \one.
\]
We define $\teacup_0 = \teacap_0 = 1_\one$.  Then, for $n \in \N$, consider the $\kk$-linear map
\begin{equation} \label{maple}
    \AWA_n(A) \to \End_{\Heis_k(A)}(\one),\quad
    z \mapsto \teacap_n \circ (\imath(z) \otimes 1_{\downarrow^{\otimes n}}) \circ \teacup_n.
\end{equation}
Diagrammatically, this map closes a diagram off to the right.  This map factors through the cocenter $C(\AWA_n(A))$, and so we have the induced map
\[
    \theta_k \colon C \left( \AWA_n(A) \right) \to \Sym(A),\quad
    \langle z \rangle \mapsto \beta_k^{-1} \left( \teacap_n \circ (\imath(z) \otimes 1_{\downarrow^{\otimes n}}) \circ \teacup_n \right).
\]
of degree $-2kdn$.

Recall that $\Sym(A)$ is a supercommutative superalgebra freely generated by $h_r(a)$, $r \ge 1$, $a \in \bB_{C(A)}$, where the generator $h_r(a)$ has degree $2d(r-1) + \deg(a)$ and parity $\bar{a}$.  Thus $\Sym(A)$ has a basis given by the monomials
\begin{equation} \label{lego}
    h_{r_1}(a_1) h_{r_2}(a_2) \dotsm h_{r_t}(a_t),\quad
    t \in \N,\ r_1 \ge \dotsb \ge r_t,\ a_1,\dotsc,a_t \in \bB_{C(A)},\ a_i \ne a_{i+1} \text{ when } r_i=r_{i+1}.
\end{equation}

Consider another grading on $\Sym(A)$, which we call \emph{bubble number}, given by $\bub(h_r(a)) = r-1$.  Thus, amongst monomials the elements \cref{lego} of a given degree, the elements with the smallest bubble number are those with the smallest value of $\sum_{i=1}^t \deg(a_i)$.  We now fix an order $\le$ on the set of monomials \cref{lego} compatible with the partial order given by degree and, amongst monomials of the same degree, compatible with the partial order given by bubble number.  When we speak of \emph{higher order terms} below, we mean with respect to the order $\le$.

\begin{prop} \label{treble}
    Assume $k \le 0$, and suppose, $t \in \N$, $a_1,\dotsc,a_t \in \bB_{C(A)}$, $n_1,\dotsc,n_t \ge 1$ and $f_i \in \kk[x_1,\dotsc,x_{n_i}]$ is a monomial of polynomial degree $r_i$ for $1 \le i \le t$.  Then
    \[
        \theta_k \left( (f_1 a_1^{(1)} \rho_{n_1}) \otimes \dotsb \otimes (f_t a_t^{(1)} \rho_{n_t}) \right)
        = \pm h_{r_1-n_1k+1}(a_1) h_{r_2-n_2k+1}(a_2) \dotsm h_{r_t-n_tk+1}(a_t) + \text{higher order terms}.
    \]
\end{prop}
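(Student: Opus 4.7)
The plan is to prove the proposition in two stages, using the $k \le 0$ hypothesis throughout.  First I would reduce the tensor-product closing-off to a product of single-cycle closing-offs, and then handle each single cycle by induction on its length.  The key observation for the first stage is that $(f_1 a_1^{(1)} \rho_{n_1}) \otimes \dotsb \otimes (f_t a_t^{(1)} \rho_{n_t})$ acts block-diagonally on $\uparrow^{\otimes n}$ with $n = n_1 + \dotsb + n_t$, so that after sandwiching with $\teacup_n$ below and $\teacap_n$ above, the resulting closed diagram decomposes topologically into $t$ loops, one per cycle, each nested inside the next.  Repeated application of the bubble slide relation \cref{bubslide} unnests these into a product of distinct closed bubbles; each unnesting generates correction terms carrying strictly more bubble factors, which correspond to strictly higher bubble number in $\Sym(A)$ and so contribute only to higher-order terms in the ordering $\le$ of the statement.

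For a single cycle $\theta_k(f a^{(1)} \rho_n)$ with $f$ a monomial of polynomial degree $r$, I would argue by induction on $n$.  The base case $n = 1$ is immediate: here $f = x_1^r$ and the closing off is the clockwise bubble $\cbubble{a}{r}$, which by \cref{coconut} equals $\beta_k(h_{r-k+1}(a))$, establishing the principal term exactly with no correction.  For the inductive step, using \cref{affrel} I would first slide all dots onto the leftmost strand modulo strictly lower polynomial-degree corrections (handled by a secondary induction on $r$), reducing to $f = x_1^r$.  Then, writing $\rho_n = s_{n-1} \rho_{n-1}$, the rightmost crossing in the closed-off diagram sits adjacent to the innermost cup-cap pair, and can be resolved by a combination of the zig-zag relation \cref{squid} and the curl relation \cref{curls}.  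The principal term of this resolution reduces the cycle length from $n$ to $n-1$ while producing exactly $-k$ additional dots on the principal bubble, matching the expected index shift from $h_{r-(n-1)k+1}(a)$ to $h_{r-nk+1}(a)$; the remaining correction terms from \cref{curls} each introduce an extra bubble factor and hence lie in the higher-order stratum.

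The hard part will be the systematic bookkeeping of the correction terms produced by \cref{bubslide}, \cref{curls}, \cref{affrel}, and \cref{squid}.  Each of these relations generates several auxiliary diagrams with extra bubbles, and I need to verify that every such diagram strictly increases the bubble number of the resulting $\Sym(A)$-monomial, equivalently strictly decreases $\sum_i \deg(a_i)$ at fixed total degree.  The organizing principle is that every correction trades a dot for an additional small-bubble-number generator $h_s(1)$, and the degree identity $D = 2d \cdot \bub + \sum_i \deg(a_i)$ coming from the grading on $\Sym(A)$ then forces every correction to sit strictly above the principal term in the ordering $\le$.  Tracking signs is mechanical: each transposition and each resolved curl contributes a fixed sign, so the global sign depends only on $n$ and the parity of $a$, explaining the $\pm$ in the statement.
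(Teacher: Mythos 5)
Your toolkit here — closing off to the right, the curl relations \cref{curls}, the bubble slides \cref{bubslide}, and an induction controlled by a bubble-number filtration — is the same as the paper's, but your two-stage organization has a genuine gap at the first stage. After sandwiching with $\teacup_n$ and $\teacap_n$, the closed diagram is indeed a union of $t$ nested closed curves, but these curves are closures of cycles carrying dots, tokens and crossings; they are not bubbles. The relation \cref{bubslide} only lets you move a genuine single-circle dotted bubble across a strand, and $\Heis_k(A)$ has no relation allowing an arbitrary closed subdiagram to be pulled out through the strands of the outer loops (the category is not symmetric monoidal). So you cannot first ``unnest into a product of single-cycle closings'' and only afterwards evaluate each closing: the order of operations must be interleaved. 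This is exactly what the paper does: it inducts on $n=n_1+\dotsb+n_t$, uses the curl relations to convert the innermost (rightmost) cycle, one strand-closure at a time, into an honest bubble with $r_t+(1-n_t)k$ dots (each step trading the scalar ``fake bubble'' term for $-k$ extra dots on the remaining strand, with all genuine-bubble terms being corrections), and only then applies \cref{bubslide} to slide that genuine bubble out to the right before invoking the inductive hypothesis on what remains. Your stage 2 is essentially this innermost-cycle reduction, so the repair is to run it first and fold your stage 1 into the induction, rather than keeping the stages separate.

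Two further points on the bookkeeping. First, your claim that every correction ``strictly increases the bubble number, equivalently strictly decreases $\sum_i\deg(a_i)$'' is backwards: each correction converts dots into teleporter tokens, so at fixed total degree the corrections end up as monomials in $\Sym(A)$ with strictly \emph{smaller} total bubble number (and, when $d\neq 0$, correspondingly larger token degree). This does not break the argument — all that is needed for \cref{treble} to feed into \cref{canopy} is unitriangularity, i.e., that the corrections lie strictly on one side of the principal term with respect to an order refining degree and bubble number — but the direction should be stated correctly. Second, your preliminary step of sliding all dots onto a single strand of each cycle needs more care than ``lower polynomial-degree corrections handled by induction on $r$'': the correction in \cref{upslides1,upslides2} replaces a crossing by a teleporter, which splits the cycle into two shorter cycles, so the error terms change the cycle type and are not covered by an induction on $r$ alone. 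The paper sidesteps this by never consolidating dots: it only peels off the dots already sitting on the strand currently being closed, writing $f_t=f_t'\otimes x_1^{r}$, and lets the remaining dots ride along in $f_t'$.
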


\begin{proof}
    We prove the result by induction on $n := n_1 + \dotsb + n_t$.  The case $n=0$ (i.e.\ $t=0$) is trivial, so suppose $n \ge 1$.  Let $\theta_k' = \beta_k \circ \theta_k$ and set $z = (f_1 a_1^{(1)} \rho_{n_1}) \otimes \dotsb \otimes (f_t a_t^{(1)} \rho_{n_t})$.  Suppose $n_t \ge 2$ and define $r \in \N$ and $f_t' \in \kk[x_1,\dotsc,x_{n_t-1}]$ such that $f_t = f_t' \otimes a_t^{(1)} x_1^r$.  Then, using square brackets to denote the closure of a diagram to the right as in \cref{maple}, we have
    \[
        \theta_k'(z)
        =
        \left[\
            \begin{tikzpicture}[anchorbase]
                \draw (0,-0.3) \braidto (0.8,0.3);
                \draw (0.2,-0.3) \braidto (0,0.3);
                \draw (0.4,-0.3) \braidto (0.2,0.3);
                \draw (0.6,-0.3) \braidto (0.4,0.3);
                \draw (0.8,-0.3) \braidto (0.6,0.3);
                \draw (-0.1,0.3) rectangle (0.9,0.8) node[midway] {$\scriptstyle{f_1 a_1}$};
                \draw[->] (0,0.8) -- (0,1);
                \draw[->] (0.2,0.8) -- (0.2,1);
                \draw[->] (0.4,0.8) -- (0.4,1);
                \draw[->] (0.6,0.8) -- (0.6,1);
                \draw[->] (0.8,0.8) -- (0.8,1);
            \end{tikzpicture}
            \cdots
            \begin{tikzpicture}[anchorbase]
                \draw (0,-0.3) \braidto (0.8,0.3);
                \draw (0.2,-0.3) \braidto (0,0.3);
                \draw (0.4,-0.3) \braidto (0.2,0.3);
                \draw (0.6,-0.3) \braidto (0.4,0.3);
                \draw (0.8,-0.3) \braidto (0.6,0.3);
                \draw (-0.1,0.3) rectangle (0.9,0.8) node[midway] {$\scriptstyle{f_t a_t}$};
                \draw[->] (0,0.8) -- (0,1);
                \draw[->] (0.2,0.8) -- (0.2,1);
                \draw[->] (0.4,0.8) -- (0.4,1);
                \draw[->] (0.6,0.8) -- (0.6,1);
                \draw[->] (0.8,0.8) -- (0.8,1);
            \end{tikzpicture}
        \ \right]
        =
        \left[\
            \begin{tikzpicture}[anchorbase]
                \draw (0,-0.3) \braidto (0.8,0.3);
                \draw (0.2,-0.3) \braidto (0,0.3);
                \draw (0.4,-0.3) \braidto (0.2,0.3);
                \draw (0.6,-0.3) \braidto (0.4,0.3);
                \draw (0.8,-0.3) \braidto (0.6,0.3);
                \draw (-0.1,0.3) rectangle (0.9,0.8) node[midway] {$\scriptstyle{f_1 a_1}$};
                \draw[->] (0,0.8) -- (0,1);
                \draw[->] (0.2,0.8) -- (0.2,1);
                \draw[->] (0.4,0.8) -- (0.4,1);
                \draw[->] (0.6,0.8) -- (0.6,1);
                \draw[->] (0.8,0.8) -- (0.8,1);
            \end{tikzpicture}
            \cdots
            \begin{tikzpicture}[anchorbase]
                \draw (0,-0.6) \braidto (0.9,0.3) -- (0.9,0.5) arc (180:0:0.2) -- (1.3,-0.3) arc (360:180:0.2) \braidto (0.6,0.3);
                \draw (0.2,-0.6) \braidto (0,0.3);
                \draw (0.4,-0.6) \braidto (0.2,0.3);
                \draw (0.6,-0.6) \braidto (0.4,0.3);
                \draw (-0.1,0.3) rectangle (0.7,0.8) node[midway] {$\scriptstyle{f_t'}$};
                \draw[->] (0,0.8) -- (0,1);
                \draw[->] (0.2,0.8) -- (0.2,1);
                \draw[->] (0.4,0.8) -- (0.4,1);
                \draw[->] (0.6,0.8) -- (0.6,1);
                \multdot{west}{0.9,0.3}{r};
                \token{west}{0.9,0.5}{a_t};
            \end{tikzpicture}
        \ \right]
    \]
    where we have suppressed the map $\imath_n$ to simplify notation and, within each rectangle, $x_i$ corresponds to a dot on the $i$-th strand (numbering from right to left) entering the rectangle,and $a_i$ corresponds to a token labelled $a_i$ placed on the rightmost strand.  Using the curl relation \cref{curls}, and then the bubble slide relations \cref{bubslide} to slide the resulting bubbles out to the right, we see that
    \[
        \theta_k'(z)
        =
        \left[\
            \begin{tikzpicture}[anchorbase]
                \draw (0,-0.3) \braidto (0.8,0.3);
                \draw (0.2,-0.3) \braidto (0,0.3);
                \draw (0.4,-0.3) \braidto (0.2,0.3);
                \draw (0.6,-0.3) \braidto (0.4,0.3);
                \draw (0.8,-0.3) \braidto (0.6,0.3);
                \draw (-0.1,0.3) rectangle (0.9,0.8) node[midway] {$\scriptstyle{f_1 a_1}$};
                \draw[->] (0,0.8) -- (0,1);
                \draw[->] (0.2,0.8) -- (0.2,1);
                \draw[->] (0.4,0.8) -- (0.4,1);
                \draw[->] (0.6,0.8) -- (0.6,1);
                \draw[->] (0.8,0.8) -- (0.8,1);
            \end{tikzpicture}
            \cdots
            \begin{tikzpicture}[anchorbase]
                \draw (0,-0.6) \braidto (0.6,0.3);
                \draw (0.2,-0.6) \braidto (0,0.3);
                \draw (0.4,-0.6) \braidto (0.2,0.3);
                \draw (0.6,-0.6) \braidto (0.4,0.3);
                \draw (-0.2,0.3) rectangle (0.8,0.8) node[midway] {$\scriptstyle{f_t' x_1^{r-k} a_t}$};
                \draw[->] (0,0.8) -- (0,1);
                \draw[->] (0.2,0.8) -- (0.2,1);
                \draw[->] (0.4,0.8) -- (0.4,1);
                \draw[->] (0.6,0.8) -- (0.6,1);
            \end{tikzpicture}
        \ \right]
        + \textup{higher order terms}.
    \]
    Repeating this process, we have
    \begin{equation} \label{curb}
        \theta_k'(z)
        =
        \left[\
            \begin{tikzpicture}[anchorbase]
                \draw (0,-0.3) \braidto (0.8,0.3);
                \draw (0.2,-0.3) \braidto (0,0.3);
                \draw (0.4,-0.3) \braidto (0.2,0.3);
                \draw (0.6,-0.3) \braidto (0.4,0.3);
                \draw (0.8,-0.3) \braidto (0.6,0.3);
                \draw (-0.1,0.3) rectangle (0.9,0.8) node[midway] {$\scriptstyle{f_1 a_1}$};
                \draw[->] (0,0.8) -- (0,1);
                \draw[->] (0.2,0.8) -- (0.2,1);
                \draw[->] (0.4,0.8) -- (0.4,1);
                \draw[->] (0.6,0.8) -- (0.6,1);
                \draw[->] (0.8,0.8) -- (0.8,1);
            \end{tikzpicture}
            \cdots
             \begin{tikzpicture}[anchorbase]
                \draw (0,-0.3) \braidto (0.8,0.3);
                \draw (0.2,-0.3) \braidto (0,0.3);
                \draw (0.4,-0.3) \braidto (0.2,0.3);
                \draw (0.6,-0.3) \braidto (0.4,0.3);
                \draw (0.8,-0.3) \braidto (0.6,0.3);
                \draw (-0.1,0.3) rectangle (0.9,0.8) node[midway] {$\scriptstyle{f_{t-1} a_{t-1}}$};
                \draw[->] (0,0.8) -- (0,1);
                \draw[->] (0.2,0.8) -- (0.2,1);
                \draw[->] (0.4,0.8) -- (0.4,1);
                \draw[->] (0.6,0.8) -- (0.6,1);
                \draw[->] (0.8,0.8) -- (0.8,1);
                \draw[->] (1.4,0.3) arc(90:-270:0.2);
                \multdot{west}{1.6,0.1}{r_t+(1-n_t)k};
                \token{east}{1.2,0.1}{a_t};
            \end{tikzpicture}
        \ \right]
        + \textup{higher order terms}.
    \end{equation}
    In addition, if $n_t=1$, we immediately have \cref{curb}.  We then use the bubble slide relations \cref{bubslide} and the induction hypothesis to see that
    \[
        \theta_k'(z)
        =
        \left[\
            \begin{tikzpicture}[anchorbase]
                \draw (0,-0.3) \braidto (0.8,0.3);
                \draw (0.2,-0.3) \braidto (0,0.3);
                \draw (0.4,-0.3) \braidto (0.2,0.3);
                \draw (0.6,-0.3) \braidto (0.4,0.3);
                \draw (0.8,-0.3) \braidto (0.6,0.3);
                \draw (-0.1,0.3) rectangle (0.9,0.8) node[midway] {$\scriptstyle{f_1 a_1}$};
                \draw[->] (0,0.8) -- (0,1);
                \draw[->] (0.2,0.8) -- (0.2,1);
                \draw[->] (0.4,0.8) -- (0.4,1);
                \draw[->] (0.6,0.8) -- (0.6,1);
                \draw[->] (0.8,0.8) -- (0.8,1);
            \end{tikzpicture}
            \cdots
             \begin{tikzpicture}[anchorbase]
                \draw (0,-0.3) \braidto (0.8,0.3);
                \draw (0.2,-0.3) \braidto (0,0.3);
                \draw (0.4,-0.3) \braidto (0.2,0.3);
                \draw (0.6,-0.3) \braidto (0.4,0.3);
                \draw (0.8,-0.3) \braidto (0.6,0.3);
                \draw (-0.1,0.3) rectangle (0.9,0.8) node[midway] {$\scriptstyle{f_{t-1} a_{t-1}}$};
                \draw[->] (0,0.8) -- (0,1);
                \draw[->] (0.2,0.8) -- (0.2,1);
                \draw[->] (0.4,0.8) -- (0.4,1);
                \draw[->] (0.6,0.8) -- (0.6,1);
                \draw[->] (0.8,0.8) -- (0.8,1);
            \end{tikzpicture}
        \ \right]
        \otimes
        \begin{tikzpicture}[anchorbase]
            \draw[->] (1.4,0.3) arc(90:-270:0.2);
            \multdot{west}{1.6,0.1}{r_t+(1-n_t)k};
            \token{east}{1.2,0.1}{a_t};
        \end{tikzpicture}
        + \textup{higher order terms}.
    \]
    Since
    \[
        \beta_k^{-1}
        \left(
            \begin{tikzpicture}[anchorbase]
                \draw[->] (1.4,0.3) arc(90:-270:0.2);
                \multdot{west}{1.6,0.1}{r_t+(1-n_t)k};
                \token{east}{1.2,0.1}{a_t};
            \end{tikzpicture}
        \right)
        = h_{r_t-n_tk+1}(a_t),
    \]
    the result now follows by induction.
\end{proof}

\begin{cor} \label{canopy}
    The elements \cref{honey2} are a basis for $\Tr(\AWC)$ as a $\kk$-module.
\end{cor}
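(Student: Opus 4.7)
The plan is to combine \cref{bridge2} (which already gives spanning) with the closure map $\theta_k$ and \cref{treble} to prove linear independence.  The proof of \cref{bridge2} shows that the change-of-basis matrix from the $\cL'$-products \cref{honey1} to the $\cL$-products \cref{honey2} is unitriangular with respect to the total-number-of-dots filtration, so it suffices to prove linear independence of the $\cL'$-products from \cref{bridge1}.

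Suppose $\sum_\alpha c_\alpha X_\alpha = 0$ is a finite linear combination in $\Tr(\AWC)$, where
\[
X_\alpha = \cL'_{n_{\alpha,1},r_{\alpha,1}}(a_{\alpha,1}) \dotsm \cL'_{n_{\alpha,t_\alpha},r_{\alpha,t_\alpha}}(a_{\alpha,t_\alpha})
\]
and the $\alpha$ range over distinct $\gtrdot$-ordered index tuples.  Grouping by rank $N = \sum_j n_{\alpha,j}$ (which is preserved by the multiplication in $\Tr(\AWC)$) places each group inside $C(\AWA_N(A))$.  Since the sum is finite, I would choose an integer $k \le 0$ whose absolute value strictly exceeds every $r_{\alpha,j}$ appearing, so that the assignment $(n,r) \mapsto n|k|+r+1$ is injective on the finite set of pairs $(n_{\alpha,j},r_{\alpha,j})$ involved.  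Consequently, distinct $\gtrdot$-ordered tuples $\alpha$ give distinct monomials
\[
M_\alpha := h_{r_{\alpha,1}-n_{\alpha,1}k+1}(a_{\alpha,1}) \dotsm h_{r_{\alpha,t_\alpha}-n_{\alpha,t_\alpha}k+1}(a_{\alpha,t_\alpha})
\]
from the monomial basis \cref{lego} of $\Sym(A)$.  Applying the closure map $\theta_k$ for this $k$ and invoking \cref{treble} (the $\cL'$-products are precisely of the form required, with each $f_i = x_1^{r_i}$ a monomial of polynomial degree $r_i$), I obtain
\[
\theta_k(X_\alpha) = \pm M_\alpha + (\text{terms strictly greater than } M_\alpha \text{ in the order } \le).
\]

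Because $\theta_k$ is homogeneous, I would argue one degree at a time.  Within a fixed degree $\delta$, let $M^*$ be the smallest (in the order $\le$) among the $M_\alpha$ with $c_\alpha \ne 0$ and $\deg(X_\alpha)=\delta$.  Every higher-order tail arising from some $\theta_k(X_{\alpha'})$ lies strictly above $M_{\alpha'}$, and $M_{\alpha'} \ge M^*$ among the contributing indices, so no such tail can equal $M^*$.  Hence the coefficient of $M^*$ in $\sum_\alpha c_\alpha \theta_k(X_\alpha)=0$ is $\pm c_{\alpha^*}$ for the unique $\alpha^*$ with $M_{\alpha^*}=M^*$, forcing $c_{\alpha^*}=0$, a contradiction.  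Iterating eliminates all the $c_\alpha$.

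The main subtlety will be the $k$-dependent nature of the distinguishing step: for any single $k \le 0$ one cannot recover arbitrary pairs $(n,r)$ from $n|k|+r$ once $r$ is unbounded, so no single $\theta_k$ separates the entire spanning set at once.  However, $\Tr(\AWC)$ itself is independent of $k$, so we are free to tune $k$ to each finite linear relation, and once the leading-monomial behaviour from \cref{treble} is secured the standard triangular argument closes the proof.
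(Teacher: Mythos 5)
Your proposal is correct and follows essentially the same route as the paper's (very terse) proof: spanning comes from \cref{bridge2}, and linear independence is obtained by choosing $k\le 0$ with $|k|$ exceeding all the $r_i$ in a given finite relation and then applying the closure map together with the leading-monomial statement of \cref{treble}. Your additional details — reducing to the $\cL'$-products via the unitriangular transition matrix (which is where \cref{treble} applies most literally, since its hypotheses require monomial $f_i$), the injectivity of $(n,r)\mapsto r-nk+1$, and the minimal-monomial triangularity argument — are exactly the steps the paper leaves implicit.
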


\begin{proof}
    In light of \cref{bridge2}, it remains to show that the elements \cref{honey2} are linearly independent.  Consider a finite collection of these elements.  Choose $k \in \Z$ such that $-k$ is larger than all the $r_i$ appearing in this finite collection.  Then \cref{treble} implies that this set of elements is linearly independent.
\end{proof}

\begin{lem} \label{rake}
    For $r \in \N$ and $a \in C(A)$, we have that
    \[
        \hcL_{0,r}(a) - \ccbubble{a}{r-k}
    \]
    is contained in the subalgebra of $\End_{\Heis_k(A)}(\one)$ generated by $\ccbubble{b}{s-k}$, $0 \le s < r$, $b \in C(A)$.
\end{lem}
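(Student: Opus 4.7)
The proof proceeds by induction on $r$. The base case $r = 0$ is immediate from \cref{L00}: $\hcL_{0,0}(a) - \ccbubble{a}{-k} = 0$. For the inductive step, assume the result for all $r' < r$, and write (for brevity) ``target subalgebra'' to mean the subalgebra of $\End_{\Heis_k(A)}(\one)$ generated by $\ccbubble{b}{s-k}$ with $0 \le s < r$ and $b \in C(A)$.

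Applying the defining recursion \cref{studded}, each correction term $\hcL_{0,r-2i}(a\kappa^i/4^i)$ with $i \ge 1$ satisfies $r - 2i < r$, so by induction it equals $\ccbubble{a\kappa^i/4^i}{r-2i-k}$ (a generator of the target subalgebra, since $r-2i < r$) plus elements of the subalgebra generated by $\ccbubble{c}{s-k}$ for $s < r-2i$; all of these lie in the target subalgebra. Thus the whole correction sum lies there, and the task reduces to showing that $\beta_k(p_{r+1}(a))$ equals the appropriate scalar multiple of $\ccbubble{a}{r-k}$ modulo the target subalgebra. Expanding $p_{r+1}(a)$ via \cref{powerdef}, the $s = r+1$ summand uses $h_0(b) = \tr(b)$ (a scalar) and the identity $\sum_b \tr(b) b^\vee = 1$ (from \cref{contract}) to produce a leading scalar multiple of $\ccbubble{a}{r-k}$, with coefficient calibrated to match the statement once the $\frac{-1}{r+1}$ prefactor in \cref{studded} is applied. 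For each summand with $1 \le s \le r$, the factor $\ccbubble{b^\vee a}{s-k-1}$ has index $s-1 < r$ and so lies in the target subalgebra; it remains to show that the companion clockwise bubble $\cbubble{b}{r-s+k} = \beta_k(h_{r+1-s}(b))$ does too.

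The key technical claim is that for every $m \ge 1$, $h_m(b) \in \Sym(A)$ lies in the subalgebra generated by $\{e_j(c) : 1 \le j \le m,\ c \in A\}$. I prove this by induction on $m$ using \cref{controversy}: extracting the coefficient of $u^{-m}$ in $\sum_c e(ac;-u)\, h(c^\vee b; u) = \tr(ab)$ and isolating the $q = m$ term (which equals $h_m(ab)$ by \cref{contract}) expresses $h_m(ab)$ as a polynomial in $\{e_p(\cdot) : 1 \le p \le m\}$ and $\{h_q(\cdot) : 1 \le q < m\}$ (plus scalars from $h_0 = \tr$); the inductive hypothesis on $m$ then eliminates the remaining $h_q$'s. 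Transferring via $\beta_k$ and applying this claim with $m = r + 1 - s \le r$ places $\cbubble{b}{r-s+k}$ in the subalgebra generated by $\ccbubble{c}{s'-k}$ with $0 \le s' \le r-s < r$, completing the induction.

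I expect the triangular conversion from clockwise to counterclockwise bubbles (equivalently, expressing $h_m$ as a polynomial in $e_1, \dotsc, e_m$ in $\Sym(A)$) to be the main obstacle. This is conceptually the Newton-like identity that follows from \cref{controversy}, but the bookkeeping over the Frobenius algebra $A$---tracking arguments in $A$ versus $C(A)$ and summing correctly over the dual bases---requires some care.
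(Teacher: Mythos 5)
Your overall strategy coincides with the paper's: induct on $r$ through the recursion \cref{studded}, identify the leading term of $\beta_k(p_{r+1}(a))$, and absorb everything else into the subalgebra generated by the lower counterclockwise bubbles. Your key technical claim---that $h_m(b)$ lies in the subalgebra of $\Sym(A)$ generated by the $e_j(c)$ with $1 \le j \le m$, proved by extracting the coefficient of $u^{-m}$ from \cref{controversy}---is correct, and is exactly the algebraic form of the paper's one-line appeal to the infinite Grassmannian relation \cref{infgrass} (which is the diagrammatic incarnation of \cref{controversy}). So the two arguments are essentially the same; yours just spells out the $h$-to-$e$ triangularity that the paper leaves implicit.

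The one genuine soft spot is the leading coefficient, which you declare ``calibrated to match'' without computing---and this is precisely where the lemma can fail, since $\ccbubble{a}{r-k}$ is algebraically independent of the lower bubbles by \cref{jungle}-type considerations, so a wrong sign is fatal. If you push the $s=r+1$ summand of \cref{powerdef} through $\beta_k$ the way you describe (using $h_0(b)=\tr(b)$ and $\sum_b \tr(b)b^\vee = 1$), you get $(-1)^r(r+1)e_{r+1}(a) \mapsto +(r+1)\,\ccbubble{a}{r-k}$, and the prefactor $\tfrac{-1}{r+1}$ in \cref{studded} then yields $\hcL_{0,r}(a) \equiv -\ccbubble{a}{r-k}$ modulo the target subalgebra---the opposite of what the lemma asserts. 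The paper instead evaluates the $t=0$ term of the \emph{diagrammatic} expression for $p_{r+1}(a)$ in \cref{beta}, where the clockwise bubble with $k-1$ dots equals $-\tr(b)$ by \cref{weed} (not $+\tr(b)=\beta_k(h_0(b))$); this produces the leading term $-(r+1)\,\ccbubble{a}{r-k}$ and hence the sign the lemma needs, consistent with \cref{L01,L02}. In other words, the algebraic formula \cref{powerdef} and the diagram in \cref{beta} disagree in exactly this boundary term, and your route lands on the wrong side of that discrepancy. You need to either work from the diagrammatic expression (as the paper does) or actually verify the sign rather than assume it; as written, the leading-term step of your proof does not close.
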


\begin{proof}
    Let $R_r$ be the subalgebra of $\End_{\Heis_k(A)}(\one)$ generated by $\ccbubble{b}{s-k}$, $0 \le s < r$, $b \in A$.  First note that
    \[
        \beta_k(p_{r+1}(a))
        \overset{\cref{weed}}{=}
        -(r+1) \ccbubble{a}{r-k} + \sum_{\substack{s,t \ge 1 \\ s+t=r+1}} s
        \begin{tikzpicture}[centerbase]
            \draw[->] (0.4,0.2) arc(90:450:0.2);
            \draw[->] (-0.4,0.2) arc(90:-270:0.2);
            \token{west}{0.6,0}{a};
            \teleport{-0.2,0}{0.2,0};
            \multdot{north}{0.4,-0.2}{s-k-1};
            \multdot{north}{-0.4,-0.2}{t+k-1};
        \end{tikzpicture}
        \overset{\cref{infgrass}}{\in} -(r+1) \ccbubble{a}{r-k} + R_r.
    \]
    The lemma then follows from \cref{studded} by induction on $r$.
\end{proof}

\begin{cor} \label{jungle}
    The elements
    \[
        \hcL_{0,r_1}(a_1) \hcL_{0,r_2}(a_2) \dotsm \hcL_{0,r_t}(a_t),\quad
        t \in \N,\ (r_i,a_i) \in \N \times \bB_{C(A)},\ (0,r_1,a_1) \gtrdot \dotsb \gtrdot (0,r_t,a_t),
    \]
    form a basis for $\End_{\Heis_k(A)}(\one)$.
\end{cor}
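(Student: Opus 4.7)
The plan is to transfer the problem across the isomorphism $\beta_k \colon \Sym(A) \xrightarrow{\cong} \End_{\Heis_k(A)}(\one)$ of \cref{coconut}. By construction, $\Sym(A)$ is the graded supercommutative superalgebra freely generated by the set $\{h_r(a) : r \ge 1,\ a \in \bB_{C(A)}\}$. Using \cref{controversy}, the identity relating $e(a;u)$ and $h(a;u)$ expresses each $e_r(a)$ in terms of $h_{\le r}(b)$ in a triangular invertible fashion (and vice versa), so $\{e_r(a) : r \ge 1,\ a \in \bB_{C(A)}\}$ is likewise a set of free generators of $\Sym(A)$. Pushing through $\beta_k$ and absorbing the sign $(-1)^{r}$ in the definition of $\beta_k(e_{r+1}(a)) = (-1)^r \ccbubble{a}{r-k}$, the closed diagrams $\{\ccbubble{a}{r-k} : r \ge 0,\ a \in \bB_{C(A)}\}$ therefore freely generate $\End_{\Heis_k(A)}(\one)$ as a graded supercommutative superalgebra.

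Next I would use \cref{rake} to perform a triangular change of generators. That lemma states that
\[
    \hcL_{0,r}(a) = \ccbubble{a}{r-k} + (\text{element in the subalgebra generated by } \ccbubble{b}{s-k},\ 0 \le s < r,\ b \in \bB_{C(A)}).
\]
Filtering by $r$ and indexing both families by $(r, a) \in \N \times \bB_{C(A)}$, the induced transition between $\{\hcL_{0,r}(a)\}$ and $\{\ccbubble{a}{r-k}\}$ is triangular with identity diagonal. Solving recursively on $r$ inverts this change of generators, and so $\{\hcL_{0,r}(a) : r \ge 0,\ a \in \bB_{C(A)}\}$ is another set of free generators for $\End_{\Heis_k(A)}(\one)$.

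The final step is the standard PBW-type basis for a free graded supercommutative superalgebra on ordered homogeneous generators: ordered monomials in the generators form a basis, with strict inequality required whenever the generator is odd. The restriction of the order $\gtrdot$ of \cref{order} to triples of the form $(0, r, a)$ is exactly this condition (since $\vartriangleright$ reduces to the usual order on $\N$ on the non-negative part, and $\gtrdot$ allows equality only for even $a$), yielding the claimed basis. I do not expect any serious obstacle here beyond careful bookkeeping; the substantive content is already packaged in \cref{coconut,rake}, and the remaining argument is a triangular change of generators followed by the graded supercommutative PBW basis theorem.
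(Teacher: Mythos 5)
Your proposal is correct and follows essentially the same route as the paper: the paper's proof is exactly the combination of \cref{coconut}, the fact that $\Sym(A)$ is a polynomial (free supercommutative) algebra in the $e_r(a)$, and the triangular relation of \cref{rake}, with your write-up merely spelling out the triangular change of generators and the supercommutative monomial-basis bookkeeping that the paper leaves implicit.
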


\begin{proof}
    This follows from \cref{rake,coconut}, together with the fact that $\Sym(A)$ is a polynomial algebra in the $e_r(a)$, $r \in \N$, $a \in \bB_{C(A)}$.
\end{proof}

\begin{proof}[Proof of \cref{linisom}]
    \Cref{linisom} now follows from \cref{yurt,canopy,jungle}.
\end{proof}

\begin{conj} \label{hope}
  The map \cref{zeta} is an isomorphism of graded superalgebras.
\end{conj}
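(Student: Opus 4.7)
The plan is to prove \cref{hope} by upgrading the linear isomorphism $\zeta_k$ from \cref{linisom} to an algebra map. Because $\zeta_k$ is defined on a PBW basis of $\rW(A)/(C-k)$ as the obvious ordered product of the $\cL_{n,r}(a)$, the universal property of the enveloping algebra reduces the conjecture to showing that the assignment $L_{n,r}(a) \mapsto \cL_{n,r}(a)$ extends to a homomorphism of Lie superalgebras $\fW(A)/(C-k) \to \Tr(\Heis_k(A))$, where the target carries the supercommutator bracket coming from the product \cref{tracemult}. Equivalently, the defining bracket \cref{jupiter}, with $C$ specialised to $k$, must hold with every $L_{n,r}(a)$ replaced by $\cL_{n,r}(a)$.

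To cut down the casework, I would exploit symmetries. The automorphism \cref{omega-alg} and the isomorphism \cref{phi-alg} of $\rW(A)$ correspond on the categorical side to \cref{Omega,Phi}, which descend via \cref{sand,flipflops} to (anti)automorphisms of $\Tr(\Heis_k(A))$. A short diagrammatic check matching \cref{noodles} shows that $\Omega_k$ should send $\cL_{n,r}(a) \mapsto (-1)^{n-1}\cL_{-n,r}(a)$ and $\Phi_k$ should send $\cL_{n,r}(a) \mapsto \cL_{-n,r}(a)$, which reduces the verification of $[\cL_{m,r}(a), \cL_{n,s}(b)]$ to the three essentially distinct sign combinations $(m,n>0)$, $(m>0>n)$, and $(m=0)$. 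In addition, the algebraic relations \cref{comm1}--\cref{comm6} show that $\fW(A)/(C-k)$ is generated as a Lie algebra, modulo its order filtration, by the Heisenberg piece $\{L_{n,0}(a) : n \in \Z\}$ together with $L_{0,2}(a)$ and $L_{\pm 1,0}(a)$, which supports an induction on the combined rank/order grading from a small collection of base brackets.

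The diagrammatic heart of the argument is then the verification of these base commutators inside $\Tr(\Heis_k(A))$. For $[\cL_{1,0}(a),\cL_{n,r}(b)]$ with $n>0$, I would place an upward strand carrying the token $a$ next to the cycle diagram $\hcL_{n,r}(b)$ in the trace and pull it through using only the affine wreath product relations \cref{tokrel,braidrel,affrel}; the resulting sum of teleporter-dot configurations matches the right-hand side of \cref{comm6}, with the central contribution $\delta_{n,-1}\delta_{r,0}\tr(a \diamond b)\,k$ emerging after closing off into a bubble and applying \cref{coconut,chew}. For the mixed bracket $[\cL_{1,r}(a), \cL_{-1,s}(b)]$, the squish relation \cref{squish} resolves the crossing of the upward and downward strands, and then the infinite Grassmannian identity \cref{infgrass}, the bubble slide relations \cref{bubslide}, and \cref{rake} together rewrite the resulting sum of clockwise and counterclockwise bubbles in terms of the recursively defined element $\hcL_{0,r+s-1}(a \diamond b)$ of \cref{studded}. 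The parallel case $m,n>0$ is treated analogously by braiding the two cycle diagrams past each other inside the trace, which is precisely the sort of computation already sampled in \cref{compute}.

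The main obstacle is the mixed case when $\kappa \ne 0$. There the operation $\star_{m,n}$ of \cref{star} expands into a nontrivial series in powers of $\kappa/4$ (as already visible in \cref{comm5,comm6}), while on the diagrammatic side the $\kappa$-dependence emerges from nested cup-and-cap loops with teleporters whose endpoints compose and slide via \cref{tokteleport,laser}. Matching these two expansions term by term, together with the recursive $\kappa$-dependent definition \cref{studded} of $\cL_{0,r}(a)$ and the generating-function identity of \cref{controversy} relating $e$-bubbles, $h$-bubbles, and $p$-bubbles, requires careful bookkeeping that couples the order filtration with the centre of $A$. It is exactly this coupling, absent when $A=\kk$, that obstructs a direct adaptation of the faithful Fock space argument of \cite{CLLS18} and that constitutes the technical heart of the conjecture.
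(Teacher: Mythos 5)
The statement you are addressing is stated in the paper as a \emph{conjecture}, not a theorem: the paper proves only that $\zeta_k$ is a linear isomorphism (\cref{linisom}) and explicitly leaves the multiplicativity open, offering the computations of \cref{compute} as partial evidence. Your proposal is a strategy outline rather than a proof, and it does not close the gap. The decisive step --- verifying that $[\cL_{m,r}(a),\cL_{n,s}(b)]$ matches \cref{jupiter} for \emph{all} $m,n,r,s$ --- is precisely what you defer in your final paragraph, where you acknowledge that the $\kappa\neq 0$ matching ``requires careful bookkeeping'' and ``constitutes the technical heart of the conjecture.'' Describing which relations (\cref{squish,infgrass,bubslide,curls}) would enter is not the same as carrying out the computation; the paper itself already identifies these ingredients and still does not claim a proof.

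There is also a concrete error in your reduction. You assert that $\fW(A)/(C-k)$ is generated, modulo the order filtration, by the $L_{n,0}(a)$ together with $L_{0,2}(a)$ and $L_{\pm1,0}(a)$, and you propose an induction from these base brackets. This fails whenever the operation $\diamond$ has no unit, which is exactly the general situation (see \cref{teeth}): for a positively graded $A$ such as a zigzag algebra, every bracket of basis elements produces a label lying in the image of $\diamond$, which can be a proper subspace of $C(A)$. In \cref{zigzag}, for instance, the image of $\diamond$ is spanned by $\cocenter{c}$ alone, so no iterated bracket of your proposed generators can reach $L_{n,r}(e_i)$ for $r\geq 1$. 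The paper makes this restriction explicit in \cref{special}: the small generating set, and hence the Fock-space-style argument of \cite{CLLS18}, is only available when $k=\pm1$ and $\diamond$ is unital. Without a valid generating set, your induction on the rank/order grading does not get off the ground, and each bracket $[\cL_{m,r}(a),\cL_{n,s}(b)]$ must be verified directly --- which is the open problem.
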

We discuss some evidence for \cref{hope} in \cref{sec:conj}.

We can now see that the symmetries \cref{Omega,Phi} of the Frobenius Heisenberg category correspond to the involutions \cref{omega-alg,phi-alg}.  The maps \cref{omega-alg,phi-alg} induce isomorphisms of graded algebras
\[
  \omega_k \colon W(A)/(C-k) \xrightarrow{\cong} W(A)/(C+k)
  \quad \text{and} \quad
  \varphi_k \colon W(A)/(C-k) \xrightarrow{\cong} \left( W(A)/(C-k) \right)^\op.
\]

\begin{prop}
  We have
  \[
    \zeta_{-k} \circ \omega_k = \cocenter{\Omega_k} \circ \zeta_k
    \quad \text{and} \quad
    \zeta_k \circ \varphi_k = \cocenter{\Phi_k} \circ \zeta_k.
  \]
\end{prop}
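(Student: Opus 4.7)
Each identity is an equality of $\kk$-linear maps, so by \cref{Wbasis} it suffices to verify it on the PBW basis of $\rW(A)/(C-k)$. The plan has two parts: first, check the identities on single generators $L_{n,r}(a)$ by direct diagrammatic calculation; then, extend to general PBW monomials using that $\omega_k, \varphi_k$ are algebra (anti-)isomorphisms and $\cocenter{\Omega_k}, \cocenter{\Phi_k}$ are algebra (anti-)homomorphisms of traces (the former composed with \eqref{sand}).

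\textbf{Step 1: the generator case.} The targets are $\cocenter{\Omega_k}(\cL_{n,r}(a)) = (-1)^{n-1}\cL_{-n,r}(a)$ and $\cocenter{\Phi_k}(\cL_{n,r}(a)) = \cL_{-n,r}(a)$. For $n \geq 1$, starting from $\hcL_{n,r}(a) = \imath_n(a^{(1)} f \rho_n)$ with $f = ((x_1+\dotsb+x_n)/n)^r$, the horizontal reflection under $\Omega_k$ yields a downward-strand diagram realizing $\rho_n^{-1}$ and brings in two cancelling signs of $(-1)^{n-1}$: one from $\Omega_k$ (the $n-1$ crossings in $\rho_n$) and one from the sign convention of $\jmath_n$. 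Conjugation invariance in $C(\AWA_n(A^\op))$, by the reverse permutation---which fixes the symmetric polynomial $f$ and sends $\rho_n^{-1}$ to $\rho_n$---then identifies the image with $(-1)^{n-1}\cL_{-n,r}(a)$ via \eqref{opal-}. The case $n \leq -1$ is symmetric, swapping the roles of $\imath$ and $\jmath$. For $n = 0$, the recursive definition \eqref{studded} combined with the transformation rule $p_r(a) \mapsto -p_r(a)$ of \eqref{Omsym} gives $\Omega_k(\hcL_{0,r}(a)) = -\hcL_{0,r}(a)$ in $\Heis_{-k}(A)$, matching the $(-1)^{0-1}$ sign. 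The $\Phi_k$ identity follows by the analogous $180\degree$-rotation argument, which contributes no crossing sign (only $(-1)^{\binom{y}{2}}$), and uses \eqref{Phsym} in the $n = 0$ case.

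\textbf{Step 2: extension to products; main obstacle.} For a PBW monomial $v = L_{n_1,r_1}(a_1) \dotsm L_{n_t,r_t}(a_t)$, multiplicativity together with Step 1 gives $\cocenter{\Omega_k}(\zeta_k(v)) = (-1)^{\sum(n_i-1)}\cL_{-n_1,r_1}(a_1)\dotsm\cL_{-n_t,r_t}(a_t)$ and $\omega_k(v) = (-1)^{\sum(n_i-1)} L_{-n_1,r_1}(a_1)\dotsm L_{-n_t,r_t}(a_t)$, so the first identity reduces to
\[
\zeta_{-k}\bigl(L_{-n_1,r_1}(a_1) \dotsm L_{-n_t,r_t}(a_t)\bigr) = \cL_{-n_1,r_1}(a_1) \dotsm \cL_{-n_t,r_t}(a_t).
\]
When the tuple $(-n_1,\dotsc,-n_t)$ is already $\gtrdot$-ordered---notably when all $n_i$ share a sign---this is immediate from the definition of $\zeta_{-k}$. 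In the mixed-sign case the tuple is not PBW-ordered, and reordering via \eqref{jupiter} produces commutator corrections in $\rW(A)/(C+k)$ that must match, under $\zeta_{-k}$, the corrections obtained by reordering the product of $\cL$'s in $\Tr(\Heis_{-k}(A))$. This commutator-matching is the main obstacle and is essentially the content of \cref{hope}; I would handle it by induction on the order filtration \eqref{rankorder}, which is preserved by $\zeta_{-k}$ per \cref{linisom}: at the associated graded level both sides reduce to the generator case, while the lower-filtration discrepancy falls under the inductive hypothesis applied to products of strictly fewer factors. The same scheme, with $\Phi_k, \varphi_k$ in place of $\Omega_k, \omega_k$, proves the second identity.
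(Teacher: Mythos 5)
Your Step 1 is, in substance, the paper's entire proof: the paper checks, directly from \cref{opal+,opal-}, that $\cocenter{\Omega_k(\cL_{n,r}(a))} = (-1)^{n-1}\cL_{-n,r}(a)$ and $\cocenter{\Phi_k(\cL_{n,r}(a))} = \cL_{-n,r}(a)$ for $n \neq 0$, observes that this matches \cref{noodles}, and settles $n=0$ via \cref{Omsym,Phsym,studded}. Your sign bookkeeping (the $(-1)^{n-1}$ from the $n-1$ crossings cancelling against the sign convention of $\jmath_n$, with the explicit $(-1)^{n-1}$ in \cref{opal-} producing the final answer) is exactly what lies behind the paper's one-line citation, so this part is correct and in the same spirit as the paper.

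Where you go beyond the paper, in Step 2, you have correctly noted that agreement on the generators does not formally determine the two composites, since neither $\zeta_k$ nor $\zeta_{-k}$ is known to be multiplicative; the paper's proof stops at the generator check and does not discuss PBW monomials with $t \ge 2$ factors at all. However, your proposed completion does not close the gap you raise. After applying $\omega_k$, the factors of a $\gtrdot$-ordered monomial appear in the block order (positive ranks)(negative ranks)(rank zero), and straightening this in $W(A)/(C+k)$ produces corrections governed by the mixed commutators $[L_{m,r}(a), L_{-n,s}(b)]$; equality with $\cocenter{\Omega_k}\circ\zeta_k$ would require $\zeta_{-k}$ to match these with the trace commutators $[\cL_{m,r}(a), \cL_{-n,s}(b)]$, which is precisely the unproven content of \cref{hope} (only special cases are verified in \cref{compute}, e.g.\ \cref{fin}). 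The induction on the order filtration does not substitute for this: commutators do lower the order on both sides, so the two sides agree in the associated graded, but the resulting lower-order discrepancy is not of the form ``the two composites evaluated at a lower-order PBW basis element,'' so your inductive hypothesis never applies to it; making it apply would again require the commutator matching, equivalently multiplicativity of $\zeta_{-k}$ on the relevant products. In short, your proposal reproduces the paper's argument and then flags a genuine subtlety the paper leaves untouched, but the additional step you attempt is not actually resolved by the induction you sketch.
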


\begin{proof}
  It follows from \cref{opal+,opal-} that $\cocenter{\Omega_k(\cL_{n,r}(a))} = (-1)^{n-1} \cL_{-n,r}(a)$ and $\cocenter{\Phi_k(\cL_{n,r}(a))} = \cL_{-n,r}(a)$ for $n \ne 0$.  This matches \cref{noodles}.  The case $n=0$ follows from \cref{Omsym,Phsym,studded}.
\end{proof}

\begin{rem} \label{Solleveld}
  When $A = \kk$, $\AWA_n(\kk)$ is the degenerate affine Hecke algebra, whose cocenter was first computed by Solleveld in \cite{Sol10}.  Solleveld's result was a key ingredient in the arguments of \cite{CLLS18}.  The methods of the current paper give a completely different approach to computing this cocenter, and make the arguments here independent of \cite{Sol10}.  This new approach utilizes the action of the trace of $\Heis_k(A)$ on its center, together with the basis theorem \cite[Th.~7.2]{BSW20} for $\Heis_k(A)$.  Note that to compute the cocenter of $\AWA_n(A)$, whose definition is independent of $k$, we perform computations in $\Heis_k(A)$ for large values of $|k|$.  This illustrates the value of a thorough knowledge of the Frobenius Heisenberg category at \emph{arbitrary} central charge.
\end{rem}

\section{Comments on the conjecture\label{sec:conj}}

In this section we discuss \cref{hope}, including giving some evidence for its veracity.

\subsection{Special cases\label{special}}

When $A = \kk$ and $k=\pm 1$, \cref{hope} essentially reduces to \cite[Th.~1]{CLLS18}.  The authors of \cite{CLLS18} compute the trace of Khovanov's original Heisenberg category, which is the quotient of $\Heis_{-1}(\kk)$ by the additional relation
\begin{equation} \label{pop}
    \begin{tikzpicture}[centerbase]
        \draw[->] (0,0.2) arc(90:450:0.2);
        \singdot{0.2,0};
    \end{tikzpicture}
    = 0.
\end{equation}
By \cref{L00}, the bubble on the right-hand side of \cref{pop} is $\hcL_{0,0}(1)$.  For this reason, the isomorphism in \cite[Th.~1]{CLLS18} is between the trace of Khovanov's Heisenberg category and $\cW_{1+\infty}/(C-1,w_{0,0})$ (see the discussion above \cref{rocks} for notation); the quotienting by $w_{0,0}$ corresponding to the relation \cref{pop}.  There are also some sign differences between the treatment in \cite{CLLS18} and that of the current paper.  These arise from the fact that, in \cite{CLLS18}, they consider the central charge $k=-1$ Heisenberg category, but compare to the $C=1$ quotient of $\cW_{1+\infty}$.

The obstacle to employing the methods of \cite{CLLS18} to compute the trace of the general Frobenius Heisenberg category $\Heis_k(A)$ is that the proof of \cite[Th.~1]{CLLS18} does not involve directly showing that the map from the trace of the Heisenberg category to $\cW_{1+\infty}/(C-1,w_{0,0})$ is an algebra isomorphism.  Rather, it uses the fact that $\cW_{1+\infty}$ has a rather small set of generators, and computes the action of these generators (and their counterparts in the trace of the Heisenberg category) on a faithful Fock space representation.  When one moves to the setting of general central charge $k$ or to a more general Frobenius algebra $A$, one needs a much larger set to generate the corresponding algebra $\rW(A)$.  The computation of the required relations involving these generators is much more complicated.

Nevertheless, when $k = \pm 1$ and the operation $\diamond$ has a unit element (e.g.\ when $A$ is the group algebra of a finite group as in \cref{groupalg}), one can use the methods of \cite{CLLS18} to prove \cref{hope}.  More precisely, one can show that $\rW(A)$ is generated by $L_{-1,0}(a)$, $L_{1,0}(a)$, $L_{0,2}(a)$, $a \in C(A)$.  Then the diagrammatic computations in \cref{compute} are sufficient to mimic the arguments of \cite{CLLS18}.

\subsection{The single cycle phenomenon}

We can view the Lie algebra $\fW(A)$ naturally as the Lie subalgebra of $W(A)$ (with the usual commutator as Lie bracket) spanned by the $L_{m,r}(a)$, $m \in \Z$, $r \in \N$, $a \in C(A)$, and the central element $C$.  Under the isomorphism $\zeta_k$ of \cref{linisom}, $\fW(A)$ then corresponds to the span of the $\cL_{m,r}(a)$.  In other words, the image of $\fW(A)$ under the map $\zeta_k$ is spanned by the classes of \emph{single cycles} (the $\cL_{m,r}(a)$, $m \ne 0$) and the $\beta_k(p_r(a))$, $r \in \N_+$ (see \cref{studded}).  Thus, \cref{hope} would imply that, when one computes the commutator $[\cL_{n,r}(a), \cL_{m,s}(b)]$, for $m+n \ne 0$, one can write it as the image in the trace of polynomial in dots and tokens on the single cycle of length $|n+m|$, where the strands are oriented up if $n+m>0$ and down if $n+m<0$.

Now suppose that the Frobenius superalgebra $A$ is nontrivially positively graded.  Then $\kappa = 0$ and, by \cref{dragon}, we have $[L_{m,r}(a), L_{n,s}(b)] = (rn-sm) L_{m+n,r+s-1}(a \diamond b)$ when $m+n \ne 0$. If we somehow knew that the commutator $[\cL_{m,r}(a), \cL_{n,s}(b)]$ could be written as a single cycle, it would not be hard to see that $[\cL_{m,r}(a), \cL_{n,s}(b)] = (rn-sm) \cL_{m+n,r+s-1}(a \diamond b)$.  The coefficient of $rn-sm$ arises from moving the $r$ dots in $\cL_{m,r}(a)$ past the $n$ strands in $\cL_{n,s}(b)$ and the $s$ dots in $\cL_{n,s}(b)$ past the $m$ strands in $\cL_{m,r}(a)$, using \cref{upslides1,upslides2}.  Then, by degree considerations, we know that the commutator cannot involve the element $\cL_{m+n,t}(c)$ for any $t < r+s-1$, $c \in C(A)$.  Thus, we can rearrange the dots on the $(m+n)$-cycle at will, knowing that all error terms (the terms coming from the teleporter in \cref{upslides1,upslides2}) must cancel.  In this way we obtain $(rn-sm) \cL_{m+n,r+s-1}(a \diamond b)$.

\subsection{Some diagrammatic commutation relations\label{compute}}

As evidence towards \cref{hope}, we compute here some diagrammatic commutation relations.  As noted in \cref{special}, these are sufficient to prove \cref{hope} in the case that $k=\pm 1$ and $\diamond$ has a unit element.  To keep the computations as uncluttered as possible, we will omit the angled brackets indicating the image of an element in the trace.

The first computation is a diagrammatic analogue of \cref{comm4}.

\begin{lem}
    For $n \in \Z$, $r \in \N$ and $a,b \in A$, we have
    \begin{equation} \label{snow}
        [\cL_{0,2}(a), \cL_{n,r}(b)]
        = 2 n \cL_{n,r+1}(a \diamond b).
    \end{equation}
\end{lem}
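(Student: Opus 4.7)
The case $n=0$ is immediate: the right-hand side vanishes because of the factor of $n$, while the left-hand side is a supercommutator in the supercommutative subalgebra $\End_{\Heis_k(A)}(\one) \cong \Sym(A)$ from \cref{coconut}, hence also zero. For $n \ne 0$, I work with representatives in $\End_{\Heis_k(A)}(\uparrow^{\otimes n})$ (or $\End_{\Heis_k(A)}(\downarrow^{\otimes |n|})$ when $n<0$) via the faithful functor $\imath$ (resp.\ $\jmath$) of \cref{imath,jmath}. Since $\hcL_{0,2}(a) \in \End(\one)$, the commutator in the trace equals the class of $\hcL_{0,2}(a) \otimes \hcL_{n,r}(b) - \hcL_{n,r}(b) \otimes \hcL_{0,2}(a)$, so computing it amounts to transporting the closed diagram $\hcL_{0,2}(a)$ past the $n$ strands of the cycle $\hcL_{n,r}(b)$.

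The main technical step is to slide $\hcL_{0,2}(a)$, as expanded in \cref{L02}, across the $n$ strands one at a time using the bubble slide relation \cref{bubslide}. Each slide contributes a correction diagram consisting of a lower-index bubble connected to the strand by a pair of teleporters carrying $s+t$ extra dots. The key point is that the specific combination of one-, two-, and three-bubble terms, together with the $-\tfrac{1}{12}\ccbubble{a\kappa}{-k}$ correction appearing in \cref{L02}, are precisely the coefficients that arise from inverting the generating function $\tfrac{\sinh(Y/2)}{Y/2}\bigr|_{Y^2=\kappa}$ present in \cref{mercury}. These coefficients cause all correction terms of nontrivial dot-order to cancel telescopically after summing the slides, and the surviving contribution per strand is a single extra dot on the strand together with a teleporter delivering $a$ onto the cycle's token $b$; by the definition \cref{diamond} and the teleportation identity \cref{teleport}, the resulting token becomes $a \diamond b$.

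Summing the per-strand contributions over the $n$ strands of the cycle yields the symmetric sum $x_1 + \cdots + x_n$, which, combined with the normalization $1/n^r$ in \cref{opal+}, produces the factor of $n$ in $2n \cL_{n,r+1}(a \diamond b)$; the overall factor of $2$ comes from the two teleporters (above and below the token on the strand) present in each leading-order correction. The case $n < 0$ is obtained by the same argument applied to $\AWC[A^\op]$ via $\jmath$, or equivalently by invoking the symmetry $\Omega_k$ of \cref{Omega}. My main obstacle is the telescoping bookkeeping of the bubble-slide corrections: one must verify that the coefficients in \cref{studded}/\cref{L02} are exactly those needed to annihilate all intermediate-order correction terms, a fact I expect to follow from a direct generating-function identity encoding the $\sinh$ structure in the defining cocycle of $\fW(A)$.
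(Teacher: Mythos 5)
Your strategy coincides with the paper's: dispose of $n=0$ by supercommutativity of $\End_{\Heis_k(A)}(\one)$, and for $n\neq 0$ push $\hcL_{0,2}(a)$ past the strands of the cycle using the bubble slide relations \cref{bubslide}. The step you flag as unverified is exactly where the paper puts the work, but it is both smaller and more local than your ``telescoping'' picture suggests: the paper isolates the single identity that $\hcL_{0,2}(a)$ placed beside one upward strand carrying a token $b$ equals $(-1)^{\bar{a}\bar{b}}$ times that strand beside $\hcL_{0,2}(a)$, plus $2$ times an upward strand carrying one dot and a token $a\diamond b$. This is a finite computation on a single strand (apply \cref{bubslide} to each of the four terms of \cref{L02} and use \cref{weed,infgrass} to evaluate the residual bubbles), not a cancellation scheme spread over the whole cycle; once it is in hand, applying it to each of the $n$ strands of $\hcL_{n,r}(b)$ produces $2(x_1+\dotsb+x_n)$, which against the normalization $\left((x_1+\dotsb+x_n)/n\right)^r$ in \cref{opal+} gives $2n\,\cL_{n,r+1}(a\diamond b)$, with $n<0$ handled analogously via $\jmath$. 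Two corrections to your heuristics: the factor of $2$ does not come from ``the two teleporters'' but from the two surviving terms $(s,t)=(1,0),(0,1)$ of the correction sum in \cref{bubslide} applied to the first term of \cref{L02} (each contributing the single extra dot), while the dotless corrections from the remaining terms of \cref{L02} cancel among themselves; and your expression for the commutator in the trace should carry the Koszul sign $(-1)^{\bar{a}\bar{b}}$. So your outline is the paper's proof in all but the execution of this one local identity, which you still need to verify explicitly.
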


\begin{proof}
    The result is clear if $n=0$, since then $\hcL_{0,2}(a), \hcL_{n,r}(b) \in \End_{\Heis_k(A)}(\one)$, which is supercommutative.  Now suppose $n >0$.  A straightforward computation using the bubble slide relations \cref{bubslide} shows that
    \[
        \hcL_{0,2}(a)\
        \begin{tikzpicture}[centerbase]
            \draw[->] (0,-0.3) to (0,0.3);
            \token{west}{0,0}{b};
        \end{tikzpicture}
        = (-1)^{\bar{a} \bar{b}}\
        \begin{tikzpicture}[centerbase]
            \draw[->] (0,-0.3) to (0,0.3);
            \token{west}{0,0}{b};
        \end{tikzpicture}
        \hcL_{0,2}(a)
        + 2\
        \begin{tikzpicture}[centerbase]
            \draw[->] (0,-0.3) to (0,0.3);
            \token{west}{0,0.1}{a \diamond b};
            \singdot{0,-0.1};
        \end{tikzpicture}.
    \]
    Then \cref{snow} follows by using this relation to commute $\hcL_{0,2}(a)$ past each strand appearing in $\hcL_{n,r}(b)$.  The case $n < 0$ is analogous.
\end{proof}

The next result should be compared to \cref{comm5,studded}.

\begin{prop} \label{fin}
    For $r,s \ge 0$, in $\Tr(\Heis_k(A))$ we have
    \[
        [\cL_{1,r}(a), \cL_{-1,s}(b)]
        =
        \begin{tikzpicture}[anchorbase]
            \draw[->] (-0.2,-0.5) to (-0.2,0.5);
            \draw[<-] (0.2,-0.5) to (0.2,0.5);
            \multdot{east}{-0.2,-0.2}{r};
            \token{east}{-0.2,0.2}{a};
            \multdot{west}{0.2,-0.2}{s};
            \token{west}{0.2,0.2}{b};
        \end{tikzpicture}
        - (-1)^{\bar{a} \bar{b}}
        \begin{tikzpicture}[anchorbase]
            \draw[<-] (-0.2,-0.5) to (-0.2,0.5);
            \draw[->] (0.2,-0.5) to (0.2,0.5);
            \multdot{west}{0.2,-0.2}{r};
            \multdot{east}{-0.2,-0.2}{s};
            \token{east}{-0.2,0.2}{b};
            \token{west}{0.2,0.2}{a};
        \end{tikzpicture}
        = \beta_k \big( p_{r+s}(a \diamond b) \big) + \delta_{r+s,0} k \tr(a \diamond b).
    \]
\end{prop}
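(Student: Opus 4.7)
\emph{First equality.} This is immediate from the trace multiplication rule \cref{tracemult} and the definitions \cref{opal+}, \cref{opal-}: under the right-to-left strand convention of $\AWC$, the tensor product $\hcL_{1,r}(a) \otimes \hcL_{-1,s}(b)$ is precisely the first depicted diagram (up-strand on the left with $r$ dots and token $a$, down-strand on the right with $s$ dots and token $b$), while $\hcL_{-1,s}(b) \otimes \hcL_{1,r}(a)$ is the second, and the sign $(-1)^{\bar a \bar b}$ is the standard graded commutator sign.

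\emph{Second equality.} The strategy is to exploit the inversion isomorphism \cref{invrel}, which for $k \ge 0$ provides a decomposition $\uparrow \otimes \downarrow \cong \downarrow \otimes \uparrow \oplus \one^{\oplus k \dim A}$ whose components are the mixed crossing $\sigma^{down}$ from \cref{fall} and the cup-cap pairings $\begin{tikzpicture}[anchorbase]\draw[->] (0,0) -- (0,0.5) arc (180:0:.3) -- (0.6,0);\multdot{west}{0,0.4}{r'};\token{west}{0,0.15}{b^\vee};\end{tikzpicture}$ for $0 \le r' \le k-1$, $b \in \bB_A$ (reversed for $k<0$). Under this decomposition, $D_1 \in \End(\uparrow \otimes \downarrow)$ becomes a block matrix whose trace class equals the supertrace of its diagonal blocks, off-diagonal contributions vanishing in $\Tr(\Heis_k(A))$ by cyclicity.

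The $\End(\downarrow \otimes \uparrow)$-diagonal block of $D_1$, obtained by conjugating through the two crossings and sliding the dots via \cref{affrel}, matches $D_2$ exactly up to the sign $(-1)^{\bar a \bar b}$ arising from swapping the tokens $a$ and $b$. Hence the signed difference $\cocenter{D_1} - (-1)^{\bar a \bar b}\cocenter{D_2}$ is supported on the $\End(\one^{\oplus k \dim A})$-diagonal block. Computing this block: the cup-cap components of the iso place $a$ and $b$ on two cups/caps joined by a teleporter; summing over $\bB_A$ and over $0 \le r' \le k-1$, together with the teleporter identities \cref{teleport} and the definition \cref{diamond} $\sum_{c}(-1)^{\bar b \bar c} a c b c^\vee = a \diamond b$, yields the diamond product in a single token. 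The curl relations \cref{curls} and bubble-slide relations \cref{bubslide} then consolidate the displaced $r$ and $s$ dots onto the resulting pair of clockwise and counterclockwise bubbles, producing precisely the sum of two-bubble products with coefficient $s$ and dot indices $(s-k-1, t+k-1)$, $s+t = r+s$, that defines $\beta_k(p_{r+s}(a \diamond b))$ via \cref{beta}. The term $\delta_{r+s,0} k \tr(a \diamond b)$ emerges in the $r=s=0$ case from the $k$ dot-free cup-cap contributions evaluated via the trace pairing \cref{contract} and the fake-bubble identities \cref{chew,fakel,faker}.

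\emph{Main obstacle.} The principal challenge is the combinatorial and sign bookkeeping inherent in the inversion decomposition: correctly identifying the $(\downarrow \otimes \uparrow)$-diagonal block as $(-1)^{\bar a \bar b} D_2$, producing the coefficient $s$ and the dot indices of \cref{beta} from the combined summation over $0 \le r' \le k-1$ and $b \in \bB_A$, and tracking signs through the teleporter conventions \cref{brexit} (which introduce intermediate factors $(-1)^{y\bar b}$). The $k < 0$ case is analogous but with the iso \cref{invrel} reversed, contributing via the $\End(\one^{\oplus (-k \dim A)})$-summand on the source side; the edge case $r+s=0$ must be isolated separately to extract the $k\tr(a \diamond b)$ contribution.
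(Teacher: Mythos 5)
Your first equality is indeed immediate from \cref{tracemult}, and splitting $\cocenter{D_1}$ (the first diagram) into diagonal blocks along the inversion isomorphism \cref{invrel} is a legitimate use of cyclicity. The genuine gap is your central claim that the $\downarrow\otimes\uparrow$-diagonal block, ``obtained by conjugating through the two crossings and sliding the dots via \cref{affrel},'' equals $(-1)^{\bar{a}\bar{b}}D_2$ exactly, so that the entire discrepancy is supported on the $\one^{\oplus k\dim A}$ summands. Sliding dots and tokens through crossings is not an equality: each dot slide produces a teleporter correction term (\cref{upslides1,upslides2}, and their mixed-crossing analogues obtained via the curl relations \cref{curls}), and cancelling the resulting double crossing against the identity costs further cup--cap/bubble corrections by \cref{squish}. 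These corrections are precisely where the bulk of $\beta_k\big(p_{r+s}(a\diamond b)\big)$ comes from, and they cannot be attributed to the $\one$-summands. The case $k=0$ falsifies your structure outright: there \cref{invrel} is the single crossing and there are no $\one$-summands at all, so your argument would force $[\cL_{1,r}(a),\cL_{-1,s}(b)]=0$; but already for $A=\kk$, $a=b=1$, $r+s\ge 1$, the right-hand side is $\beta_0\big(p_{r+s}(1)\big)\neq 0$, since $\beta_0$ is an isomorphism (\cref{coconut}) and $p_{r+s}(1)$ is a nonzero power sum.

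For comparison, the paper's proof never invokes the block decomposition: it starts from the first diagram, applies \cref{squish} to trade the identity for the double crossing, and then tracks every correction term arising from dot slides (\cref{upslides1}), token teleportation (\cref{tokteleport,laser}) and curls (\cref{curls}); the accumulated two-bubble terms are summed using \cref{weed,infgrass} and identified with $\beta_k\big(p_{r+s}(a\diamond b)\big)+\delta_{r+s,0}\,k\tr(a\diamond b)$ via \cref{powerdef}. To repair your argument you would have to compute the $\downarrow\otimes\uparrow$ block honestly, keeping all sliding and squish corrections, at which point you are carrying out essentially the same computation as the paper rather than bypassing it.
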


\begin{proof}
    In $\Tr(\Heis_k(A))$, we compute
    \begin{align*}
        \begin{tikzpicture}[anchorbase]
            \draw[->] (-0.2,-0.5) to (-0.2,0.5);
            \draw[<-] (0.2,-0.5) to (0.2,0.5);
            \multdot{east}{-0.2,-0.2}{r};
            \token{east}{-0.2,0.2}{a};
            \multdot{west}{0.2,-0.2}{s};
            \token{west}{0.2,0.2}{b};
        \end{tikzpicture}
        &\overset{\mathclap{\cref{squish}}}{=}\
        \begin{tikzpicture}[anchorbase]
            \draw[->] (-0.2,-0.6) to (-0.2,-0.4) \braidto (0.2,0) \braidto (-0.2,0.4) to (-0.2,0.6);
            \draw[<-] (0.2,-0.6) to (0.2,-0.4) \braidto (-0.2,0) \braidto (0.2,0.4) to (0.2,0.6);
            \multdot{east}{-0.2,-0.4}{r};
            \multdot{west}{0.2,-0.4}{s};
            \token{east}{-0.2,0.4}{a};
            \token{west}{0.2,0.4}{b};
        \end{tikzpicture}
        - \sum_{t,u \ge 0}
        \begin{tikzpicture}[anchorbase]
            \draw[->] (-0.4,0.3) arc(90:0:0.2) to (-0.2,-0.1) arc(0:-180:0.2) to (-0.6,0.1) arc(180:90:0.2);
            \draw[->] (0.4,0.3) arc(90:180:0.2) to (0.2,-0.1) arc(180:360:0.2) to (0.6,0.1) arc(0:90:0.2);
            \teleport{-0.2,0.1}{0.2,0.1};
            \teleport{-0.2,-0.1}{0.2,-0.1};
            \token{east}{-0.6,0.1}{a};
            \multdot{east}{-0.6,-0.1}{r+s+t+u};
            \token{west}{0.6,0.1}{b};
            \multdot{west}{0.6,-0.1}{-t-u-2};
        \end{tikzpicture}
        \overset{\cref{tokteleport}}{\underset{\cref{laser}}{=}}
        \begin{tikzpicture}[anchorbase]
            \draw[->] (-0.2,-0.5) to (-0.2,-0.3) \braidto (0.2,0.1) \braidto (-0.2,0.5);
            \draw[<-] (0.2,-0.5) to (0.2,-0.3) \braidto (-0.2,0.1) \braidto (0.2,0.5);
            \multdot{east}{-0.2,-0.3}{r};
            \multdot{west}{0.2,-0.3}{s};
        \end{tikzpicture}
        - \sum_{t \ge r+1} (t-r)
        \begin{tikzpicture}[anchorbase]
            \draw[->] (-0.4,0.2) arc(90:-270:0.2);
            \draw[->] (0.4,0.2) arc(90:450:0.2);
            \teleport{-0.2,0}{0.2,0};
            \multdot{north}{-0.4,-0.2}{s+t-1};
            \multdot{north}{0.4,-0.2}{r-t-1};
            \token{west}{0.6,0}{a \diamond b};
         \end{tikzpicture}
         \\
         &\overset{\mathclap{\cref{upslides1}}}{=}\
         \begin{tikzpicture}[anchorbase]
            \draw[->] (-0.2,-0.6) to (-0.2,-0.4) \braidto (0.2,0) \braidto (-0.2,0.4) to (-0.2,0.6);
            \draw[<-] (0.2,-0.6) to (0.2,-0.4) \braidto (-0.2,0) \braidto (0.2,0.4) to (0.2,0.6);
            \multdot{west}{0.2,0}{r};
            \multdot{west}{0.2,-0.4}{s};
            \token{east}{-0.2,0.4}{a};
            \token{west}{0.2,0.4}{b};
        \end{tikzpicture}
        - \sum_{u=0}^{r-1}
        \begin{tikzpicture}[anchorbase]
            \draw[->] (0,0.5) arc(90:0:0.2) to[out=down,in=up] (-0.2,-0.3) arc(180:360:0.2) to[out=up,in=down] (-0.2,0.3) arc(180:90:0.2);
            \multdot{west}{0.2,0.3}{s+u};
            \multdot{west}{0.2,-0.3}{r-1-u};
            \teleport{0.15,0.15}{0.15,-0.15};
            \token{east}{-0.2,0.3}{a};
            \token{east}{-0.2,-0.3}{b};
        \end{tikzpicture}
        - \sum_{t \ge r+1} (t-r)
        \begin{tikzpicture}[anchorbase]
            \draw[->] (-0.4,0.2) arc(90:-270:0.2);
            \draw[->] (0.4,0.2) arc(90:450:0.2);
            \teleport{-0.2,0}{0.2,0};
            \multdot{north}{-0.4,-0.2}{s+t-1};
            \multdot{north}{0.4,-0.2}{r-t-1};
            \token{west}{0.6,0}{a \diamond b};
         \end{tikzpicture}
         \\
         &\overset{\mathclap{\cref{curls}}}{=}\
         \begin{tikzpicture}[anchorbase]
            \draw[->] (-0.2,-0.6) to (-0.2,-0.4) \braidto (0.2,0) \braidto (-0.2,0.4) to (-0.2,0.6);
            \draw[<-] (0.2,-0.6) to (0.2,-0.4) \braidto (-0.2,0) \braidto (0.2,0.4) to (0.2,0.6);
            \multdot{west}{0.2,0}{r};
            \multdot{west}{0.2,-0.4}{s};
            \token{east}{-0.2,0.4}{a};
            \token{west}{0.2,0.4}{b};
        \end{tikzpicture}
        - \sum_{u=0}^{r-1} \sum_{t \ge 0}
        \begin{tikzpicture}[anchorbase]
            \draw[->] (-0.4,0.3) arc(90:0:0.2) to (-0.2,-0.1) arc(0:-180:0.2) to (-0.6,0.1) arc(180:90:0.2);
            \draw[->] (0.4,0.3) arc(90:180:0.2) to (0.2,-0.1) arc(180:360:0.2) to (0.6,0.1) arc(0:90:0.2);
            \teleport{-0.2,0.1}{0.2,0.1};
            \teleport{-0.2,-0.1}{0.2,-0.1};
            \multdot{north}{-0.4,-0.3}{s+t+u};
            \multdot{west}{0.6,-0.1}{r-t-u-2};
            \token{east}{-0.6,0.1}{a};
            \token{west}{0.6,0.1}{b};
         \end{tikzpicture}
        - \sum_{t \ge r+1} (t-r)
        \begin{tikzpicture}[anchorbase]
            \draw[->] (-0.4,0.2) arc(90:-270:0.2);
            \draw[->] (0.4,0.2) arc(90:450:0.2);
            \teleport{-0.2,0}{0.2,0};
            \multdot{north}{-0.4,-0.2}{s+t-1};
            \multdot{north}{0.4,-0.2}{r-t-1};
            \token{west}{0.6,0}{a \diamond b};
         \end{tikzpicture}
         \\
         &\overset{\mathclap{\cref{tokteleport}}}{\underset{\mathclap{\cref{laser}}}{=}}\
         \begin{tikzpicture}[anchorbase]
            \draw[->] (-0.2,-0.6) to (-0.2,-0.4) \braidto (0.2,0) \braidto (-0.2,0.4) to (-0.2,0.6);
            \draw[<-] (0.2,-0.6) to (0.2,-0.4) \braidto (-0.2,0) \braidto (0.2,0.4) to (0.2,0.6);
            \multdot{west}{0.2,0}{r};
            \multdot{west}{0.2,-0.4}{s};
            \token{east}{-0.2,0.4}{a};
            \token{west}{0.2,0.4}{b};
        \end{tikzpicture}
        - \sum_{u=0}^{r-1} \sum_{t \ge u+1}
        \begin{tikzpicture}[anchorbase]
            \draw[->] (-0.4,0.2) arc(90:-270:0.2);
            \draw[->] (0.4,0.2) arc(90:450:0.2);
            \teleport{-0.2,0}{0.2,0};
            \multdot{north}{-0.4,-0.2}{s+t-1};
            \multdot{north}{0.4,-0.2}{r-t-1};
            \token{west}{0.6,0}{a \diamond b};
        \end{tikzpicture}
        - \sum_{t \ge r+1} (t-r)
        \begin{tikzpicture}[anchorbase]
            \draw[->] (-0.4,0.2) arc(90:-270:0.2);
            \draw[->] (0.4,0.2) arc(90:450:0.2);
            \teleport{-0.2,0}{0.2,0};
            \multdot{north}{-0.4,-0.2}{s+t-1};
            \multdot{north}{0.4,-0.2}{r-t-1};
            \token{west}{0.6,0}{a \diamond b};
         \end{tikzpicture}
         \\
         &=
         \begin{tikzpicture}[anchorbase]
            \draw[->] (-0.2,-0.6) to (-0.2,-0.4) \braidto (0.2,0) \braidto (-0.2,0.4) to (-0.2,0.6);
            \draw[<-] (0.2,-0.6) to (0.2,-0.4) \braidto (-0.2,0) \braidto (0.2,0.4) to (0.2,0.6);
            \multdot{west}{0.2,0}{r};
            \multdot{west}{0.2,-0.4}{s};
            \token{east}{-0.2,0.4}{a};
            \token{west}{0.2,0.4}{b};
        \end{tikzpicture}
        - \sum_{t \ge 1} t
        \begin{tikzpicture}[anchorbase]
            \draw[->] (-0.4,0.2) arc(90:-270:0.2);
            \draw[->] (0.4,0.2) arc(90:450:0.2);
            \teleport{-0.2,0}{0.2,0};
            \multdot{north}{-0.4,-0.2}{s+t-1};
            \multdot{north}{0.4,-0.2}{r-t-1};
            \token{west}{0.6,0}{a \diamond b};
        \end{tikzpicture}
        \\
        &\overset{\mathclap{\cref{upslides1}}}{=}\
        \begin{tikzpicture}[anchorbase]
            \draw[->] (-0.2,-0.6) to (-0.2,-0.4) \braidto (0.2,0) \braidto (-0.2,0.4) to (-0.2,0.6);
            \draw[<-] (0.2,-0.6) to (0.2,-0.4) \braidto (-0.2,0) \braidto (0.2,0.4) to (0.2,0.6);
            \multdot{west}{0.2,0}{r};
            \multdot{east}{-0.2,0}{s};
            \token{east}{-0.2,0.4}{a};
            \token{west}{0.2,0.4}{b};
        \end{tikzpicture}
        - \sum_{u=0}^{s-1}
        \begin{tikzpicture}[anchorbase]
            \draw[->] (0,0.5) arc(90:0:0.2) to[out=down,in=up] (-0.2,-0.3) arc(180:360:0.2) to[out=up,in=down] (-0.2,0.3) arc(180:90:0.2);
            \multdot{west}{0.2,0.3}{s-1-u};
            \multdot{west}{0.2,-0.3}{r+u};
            \teleport{0.15,0.15}{0.15,-0.15};
            \token{east}{-0.2,0.3}{a};
            \token{east}{-0.2,-0.3}{b};
        \end{tikzpicture}
        - \sum_{t \ge 1} t
        \begin{tikzpicture}[anchorbase]
            \draw[->] (-0.4,0.2) arc(90:-270:0.2);
            \draw[->] (0.4,0.2) arc(90:450:0.2);
            \teleport{-0.2,0}{0.2,0};
            \multdot{north}{-0.4,-0.2}{s+t-1};
            \multdot{north}{0.4,-0.2}{r-t-1};
            \token{west}{0.6,0}{a \diamond b};
        \end{tikzpicture}
        \\
        &\overset{\mathclap{\cref{curls}}}{=}\
        \begin{tikzpicture}[anchorbase]
            \draw[->] (-0.2,-0.6) to (-0.2,-0.4) \braidto (0.2,0) \braidto (-0.2,0.4) to (-0.2,0.6);
            \draw[<-] (0.2,-0.6) to (0.2,-0.4) \braidto (-0.2,0) \braidto (0.2,0.4) to (0.2,0.6);
            \multdot{west}{0.2,0}{r};
            \multdot{east}{-0.2,0}{s};
            \token{east}{-0.2,0.4}{a};
            \token{west}{0.2,0.4}{b};
        \end{tikzpicture}
        + \sum_{u=0}^{s-1} \sum_{t \ge 0}
        \begin{tikzpicture}[anchorbase]
            \draw[->] (-0.4,0.3) arc(90:0:0.2) to (-0.2,-0.1) arc(0:-180:0.2) to (-0.6,0.1) arc(180:90:0.2);
            \draw[->] (0.4,0.3) arc(90:180:0.2) to (0.2,-0.1) arc(180:360:0.2) to (0.6,0.1) arc(0:90:0.2);
            \teleport{-0.2,0.1}{0.2,0.1};
            \teleport{-0.2,-0.1}{0.2,-0.1};
            \multdot{north}{-0.4,-0.3}{s-t-u-2};
            \multdot{west}{0.6,-0.1}{r+t+u};
            \token{east}{-0.6,0.1}{a};
            \token{west}{0.6,0.1}{b};
        \end{tikzpicture}
        - \sum_{t \ge 1} t
        \begin{tikzpicture}[anchorbase]
            \draw[->] (-0.4,0.2) arc(90:-270:0.2);
            \draw[->] (0.4,0.2) arc(90:450:0.2);
            \teleport{-0.2,0}{0.2,0};
            \multdot{north}{-0.4,-0.2}{s+t-1};
            \multdot{north}{0.4,-0.2}{r-t-1};
            \token{west}{0.6,0}{a \diamond b};
        \end{tikzpicture}
        \\
        &\overset{\mathclap{\cref{tokteleport}}}{\underset{\mathclap{\cref{laser}}}{=}}\
        \begin{tikzpicture}[anchorbase]
            \draw[->] (-0.2,-0.6) to (-0.2,-0.4) \braidto (0.2,0) \braidto (-0.2,0.4) to (-0.2,0.6);
            \draw[<-] (0.2,-0.6) to (0.2,-0.4) \braidto (-0.2,0) \braidto (0.2,0.4) to (0.2,0.6);
            \multdot{west}{0.2,0}{r};
            \multdot{east}{-0.2,0}{s};
            \token{east}{-0.2,0.4}{a};
            \token{west}{0.2,0.4}{b};
        \end{tikzpicture}
        + \sum_{u=0}^{s-1} \sum_{t \ge u+1}
        \begin{tikzpicture}[anchorbase]
            \draw[->] (-0.4,0.2) arc(90:-270:0.2);
            \draw[->] (0.4,0.2) arc(90:450:0.2);
            \teleport{-0.2,0}{0.2,0};
            \multdot{north}{-0.4,-0.2}{s-t-1};
            \multdot{north}{0.4,-0.2}{r+t-1};
            \token{west}{0.6,0}{a \diamond b};
        \end{tikzpicture}
        - \sum_{t \ge 1} t
        \begin{tikzpicture}[anchorbase]
            \draw[->] (-0.4,0.2) arc(90:-270:0.2);
            \draw[->] (0.4,0.2) arc(90:450:0.2);
            \teleport{-0.2,0}{0.2,0};
            \multdot{north}{-0.4,-0.2}{s+t-1};
            \multdot{north}{0.4,-0.2}{r-t-1};
            \token{west}{0.6,0}{a \diamond b};
        \end{tikzpicture}
        \\
        &\overset{\mathclap{\cref{squish}}}{=}\ (-1)^{\bar{a} \bar{b}}
        \begin{tikzpicture}[anchorbase]
            \draw[<-] (-0.2,-0.5) to (-0.2,0.5);
            \draw[->] (0.2,-0.5) to (0.2,0.5);
            \multdot{west}{0.2,-0.2}{r};
            \multdot{east}{-0.2,-0.2}{s};
            \token{east}{-0.2,0.2}{b};
            \token{west}{0.2,0.2}{a};
        \end{tikzpicture}
        + \sum_{t,u \ge 0}
        \begin{tikzpicture}[anchorbase]
            \draw[->] (-0.4,0.3) arc(90:0:0.2) to (-0.2,-0.1) arc(0:-180:0.2) to (-0.6,0.1) arc(180:90:0.2);
            \draw[->] (0.4,0.3) arc(90:180:0.2) to (0.2,-0.1) arc(180:360:0.2) to (0.6,0.1) arc(0:90:0.2);
            \teleport{-0.2,0.1}{0.2,0.1};
            \teleport{-0.2,-0.1}{0.2,-0.1};
            \multdot{north}{-0.4,-0.3}{-t-u-2};
            \multdot{west}{0.6,-0.1}{r+s+t+u};
            \token{east}{-0.6,0.1}{a};
            \token{west}{0.6,0.1}{b};
        \end{tikzpicture}
        + \sum_{u=0}^{s-1} \sum_{t \ge u+1}
        \begin{tikzpicture}[anchorbase]
            \draw[->] (-0.4,0.2) arc(90:-270:0.2);
            \draw[->] (0.4,0.2) arc(90:450:0.2);
            \teleport{-0.2,0}{0.2,0};
            \multdot{north}{-0.4,-0.2}{s-t-1};
            \multdot{north}{0.4,-0.2}{r+t-1};
            \token{west}{0.6,0}{a \diamond b};
        \end{tikzpicture}
        - \sum_{t \ge 1} t
        \begin{tikzpicture}[anchorbase]
            \draw[->] (-0.4,0.2) arc(90:-270:0.2);
            \draw[->] (0.4,0.2) arc(90:450:0.2);
            \teleport{-0.2,0}{0.2,0};
            \multdot{north}{-0.4,-0.2}{s+t-1};
            \multdot{north}{0.4,-0.2}{r-t-1};
            \token{west}{0.6,0}{a \diamond b};
        \end{tikzpicture}
        \\
        &= (-1)^{\bar{a} \bar{b}}
        \begin{tikzpicture}[anchorbase]
            \draw[<-] (-0.2,-0.5) to (-0.2,0.5);
            \draw[->] (0.2,-0.5) to (0.2,0.5);
            \multdot{west}{0.2,-0.2}{r};
            \multdot{east}{-0.2,-0.2}{s};
            \token{east}{-0.2,0.2}{b};
            \token{west}{0.2,0.2}{a};
        \end{tikzpicture}
        + \sum_{t \ge s+1} (t-s)
        \begin{tikzpicture}[anchorbase]
            \draw[->] (-0.4,0.2) arc(90:-270:0.2);
            \draw[->] (0.4,0.2) arc(90:450:0.2);
            \teleport{-0.2,0}{0.2,0};
            \multdot{north}{-0.4,-0.2}{s-t-1};
            \multdot{north}{0.4,-0.2}{r+t-1};
            \token{west}{0.6,0}{a \diamond b};
        \end{tikzpicture}
        + \sum_{u=0}^{s-1} \sum_{t \ge u+1}
        \begin{tikzpicture}[anchorbase]
            \draw[->] (-0.4,0.2) arc(90:-270:0.2);
            \draw[->] (0.4,0.2) arc(90:450:0.2);
            \teleport{-0.2,0}{0.2,0};
            \multdot{north}{-0.4,-0.2}{s-t-1};
            \multdot{north}{0.4,-0.2}{r+t-1};
            \token{west}{0.6,0}{a \diamond b};
        \end{tikzpicture}
        - \sum_{t \ge 1} t
        \begin{tikzpicture}[anchorbase]
            \draw[->] (-0.4,0.2) arc(90:-270:0.2);
            \draw[->] (0.4,0.2) arc(90:450:0.2);
            \teleport{-0.2,0}{0.2,0};
            \multdot{north}{-0.4,-0.2}{s+t-1};
            \multdot{north}{0.4,-0.2}{r-t-1};
            \token{west}{0.6,0}{a \diamond b};
        \end{tikzpicture}
        \\
        &= (-1)^{\bar{a} \bar{b}}
        \begin{tikzpicture}[anchorbase]
            \draw[<-] (-0.2,-0.5) to (-0.2,0.5);
            \draw[->] (0.2,-0.5) to (0.2,0.5);
            \multdot{west}{0.2,-0.2}{r};
            \multdot{east}{-0.2,-0.2}{s};
            \token{east}{-0.2,0.2}{b};
            \token{west}{0.2,0.2}{a};
        \end{tikzpicture}
        + \sum_{t \ge 1} t
        \begin{tikzpicture}[anchorbase]
            \draw[->] (-0.4,0.2) arc(90:-270:0.2);
            \draw[->] (0.4,0.2) arc(90:450:0.2);
            \teleport{-0.2,0}{0.2,0};
            \multdot{north}{-0.4,-0.2}{s-t-1};
            \multdot{north}{0.4,-0.2}{r+t-1};
            \token{west}{0.6,0}{a \diamond b};
        \end{tikzpicture}
        - \sum_{t \ge 1} t
        \begin{tikzpicture}[anchorbase]
            \draw[->] (-0.4,0.2) arc(90:-270:0.2);
            \draw[->] (0.4,0.2) arc(90:450:0.2);
            \teleport{-0.2,0}{0.2,0};
            \multdot{north}{-0.4,-0.2}{s+t-1};
            \multdot{north}{0.4,-0.2}{r-t-1};
            \token{west}{0.6,0}{a \diamond b};
        \end{tikzpicture}
        \\
        &= (-1)^{\bar{a} \bar{b}}
        \begin{tikzpicture}[anchorbase]
            \draw[<-] (-0.2,-0.5) to (-0.2,0.5);
            \draw[->] (0.2,-0.5) to (0.2,0.5);
            \multdot{west}{0.2,-0.2}{r};
            \multdot{east}{-0.2,-0.2}{s};
            \token{east}{-0.2,0.2}{b};
            \token{west}{0.2,0.2}{a};
        \end{tikzpicture}
        + \sum_{t \in \Z} t
        \begin{tikzpicture}[anchorbase]
            \draw[->] (-0.4,0.2) arc(90:-270:0.2);
            \draw[->] (0.4,0.2) arc(90:450:0.2);
            \teleport{-0.2,0}{0.2,0};
            \multdot{north}{-0.4,-0.2}{s-t-1};
            \multdot{north}{0.4,-0.2}{r+t-1};
            \token{west}{0.6,0}{a \diamond b};
        \end{tikzpicture}
        \overset{\cref{weed}}{=} (-1)^{\bar{a} \bar{b}}
        \begin{tikzpicture}[anchorbase]
            \draw[<-] (-0.2,-0.5) to (-0.2,0.5);
            \draw[->] (0.2,-0.5) to (0.2,0.5);
            \multdot{west}{0.2,-0.2}{r};
            \multdot{east}{-0.2,-0.2}{s};
            \token{east}{-0.2,0.2}{b};
            \token{west}{0.2,0.2}{a};
        \end{tikzpicture}
        + \sum_{t = -r-k}^{s-k} t
        \begin{tikzpicture}[anchorbase]
            \draw[->] (-0.4,0.2) arc(90:-270:0.2);
            \draw[->] (0.4,0.2) arc(90:450:0.2);
            \teleport{-0.2,0}{0.2,0};
            \multdot{north}{-0.4,-0.2}{s-t-1};
            \multdot{north}{0.4,-0.2}{r+t-1};
            \token{west}{0.6,0}{a \diamond b};
        \end{tikzpicture}
        \\
        &\overset{\mathclap{\cref{tokteleport}}}{=}\ (-1)^{\bar{a} \bar{b}}
        \begin{tikzpicture}[anchorbase]
            \draw[<-] (-0.2,-0.5) to (-0.2,0.5);
            \draw[->] (0.2,-0.5) to (0.2,0.5);
            \multdot{west}{0.2,-0.2}{r};
            \multdot{east}{-0.2,-0.2}{s};
            \token{east}{-0.2,0.2}{b};
            \token{west}{0.2,0.2}{a};
        \end{tikzpicture}
        + \sum_{t=0}^{r+s} (t-r-k)
        \begin{tikzpicture}[anchorbase]
            \draw[->] (-0.4,0.2) arc(90:-270:0.2);
            \draw[->] (0.4,0.2) arc(90:450:0.2);
            \teleport{-0.2,0}{0.2,0};
            \multdot{north}{-0.4,-0.2}{k-1+r+s-t};
            \multdot{west}{0.6,0}{-k-1+t};
            \token{east}{-0.6,0}{a \diamond b};
        \end{tikzpicture}
        \\
        &\overset{\mathclap{\cref{infgrass}}}{=}\ (-1)^{\bar{a} \bar{b}}
        \begin{tikzpicture}[anchorbase]
            \draw[<-] (-0.2,-0.5) to (-0.2,0.5);
            \draw[->] (0.2,-0.5) to (0.2,0.5);
            \multdot{west}{0.2,-0.2}{r};
            \multdot{east}{-0.2,-0.2}{s};
            \token{east}{-0.2,0.2}{b};
            \token{west}{0.2,0.2}{a};
        \end{tikzpicture}
        + \delta_{r+s,0} k \tr(a \diamond b)
        + \sum_{t=1}^{r+s} t
        \begin{tikzpicture}[anchorbase]
            \draw[->] (-0.4,0.2) arc(90:-270:0.2);
            \draw[->] (0.4,0.2) arc(90:450:0.2);
            \teleport{-0.2,0}{0.2,0};
            \multdot{north}{-0.4,-0.2}{k-1+r+s-t};
            \multdot{west}{0.6,0}{-k-1+t};
            \token{east}{-0.6,0}{a \diamond b};
        \end{tikzpicture}
        \ .
    \end{align*}
    The result then follows from \cref{powerdef}.
\end{proof}

The following result should be compared to \cref{comm3}.

\begin{lem}
    For $m,n \in \N$ and $a,b \in A$, we have
    \begin{equation} \label{VirHeis}
        [\cL_{m,1}(a), \cL_{n,0}(b)] = n \cL_{m+n,0}(a \diamond b).
    \end{equation}
\end{lem}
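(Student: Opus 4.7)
The proof is a direct diagrammatic calculation: we place $\hcL_{m,1}(a)$ and $\hcL_{n,0}(b)$ side by side and compute the commutator using the affine slide \cref{upslides1} together with the cyclic invariance of the trace. The simplifying feature over \cref{fin} is that $\hcL_{n,0}(b)$ is dot-free, so only the single (averaged) dot of $\hcL_{m,1}(a)$ can produce obstructions. First, one observes that conjugation in the cocenter by powers of $\rho_m$ allows the averaged dot $\tfrac{1}{m}\sum_i x_i$ in $\hcL_{m,1}(a)$ to be collapsed onto a single chosen strand, since all $m$ positions yield the same class in the trace.

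In the absence of dots, conjugating $\hcL_{m,1}(a) \otimes \hcL_{n,0}(b)$ by the braid $X_{m,n}$ that swaps the two groups of strands would identify it in the trace with $(-1)^{\bar{a}\bar{b}}\cocenter{\hcL_{n,0}(b) \otimes \hcL_{m,1}(a)}$, since the upward crossing is a symmetric braiding by \cref{braidrel}. Applying \cref{upslides1} to slide the single dot of $\hcL_{m,1}(a)$ across each of the $n$ strands of the $b$-cycle in turn expresses the commutator as a sum of teleporter correction terms, one for each such crossing. Each correction replaces a crossing of $X_{m,n}$ by a pair of parallel strands joined by a teleporter. With that one crossing removed, the remaining diagram pre-composes the disjoint cycles $\rho_m$ and $\rho_n$ by a single transposition linking an element of each, which produces a single $(m+n)$-cycle. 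Resolving the teleporter $\sum_c c \otimes c^\vee$ against the tokens $a$ and $b$, now on adjacent merged strands, yields by \cref{diamond} a single token $a \diamond b$ on one strand of the merged cycle, with no dots remaining.

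Each of the $n$ choices of strand in the $b$-cycle yields one such correction; by rotational invariance of the merged $(m+n)$-cycle, these $n$ terms all represent the same class $\cL_{m+n,0}(a \diamond b)$ in the trace, summing to $n\,\cL_{m+n,0}(a \diamond b)$. The principal technical obstacle is tracking signs so that the merged token is $a \diamond b$ and not a signed variant (with the supercommutativity of $\diamond$ from \cref{trooper} helping to absorb sign ambiguities), and verifying that the lower-order correction terms appearing when a dot only partially crosses a strand cancel in the trace, so that the final answer is genuinely dot-free. Both points follow from a careful application of \cref{upslides1,tokteleport,laser} in the same spirit as the computation in \cref{fin}.
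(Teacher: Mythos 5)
Your main case ($m,n\ge 1$) follows the same route as the paper: conjugate the juxtaposed cycles by the block-swap braid, slide the dot through the $n$ crossings with the $b$-cycle via \cref{upslides1,upslides2}, and note that each teleporter correction merges the two cycles into a single $(m+n)$-cycle carrying the token $a\diamond b$. But there are two genuine gaps. First, your opening reduction is false as stated: the classes $\cocenter{a^{(1)}x_i\rho_m}$ are \emph{not} independent of $i$, so the averaged dot $\tfrac1m\sum_i x_i$ cannot be "collapsed" onto a chosen strand by conjugating by powers of $\rho_m$. Dots do not conjugate exactly under permutations: already for $m=2$ and $A=\kk$ one has $x_2 s_1 = s_1 x_1 + 1$, so $\cocenter{x_2 s_1}-\cocenter{x_1 s_1}=\cocenter{1}=\cL_{1,0}(1)^2\neq 0$; this is precisely the "modulo diagrams with fewer dots" phenomenon behind the unitriangular change of basis in the proof of \cref{bridge2}. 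The step can be repaired: the discrepancy between the averaged-dot and single-dot classes is a sum of dot-free classes, and dot-free classes supercommute with the dot-free $\cL_{n,0}(b)$ (conjugation by a permutation braid is exact when no dots are present, since tokens slide through crossings and the braid relations hold on the nose), so the two commutators coincide; alternatively, run the sliding argument separately for each of the $m$ dot positions and use the $1/m$ prefactor, which is in effect what the paper does with the dot "on an arbitrary strand." As written, though, the justification is wrong and some such argument must be supplied.

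Second, the lemma is stated for $m,n\in\N$, and your argument only makes sense when both are positive: for $m=0$ (or $n=0$) the elements $\hcL_{0,1}(a)$ and $\hcL_{0,0}(b)$ are not cycles but the bubble expressions \cref{L00,L01} coming from \cref{studded}, so there is no $\rho_m$, no block-swap braid, and no dot to slide. These cases require a separate argument, as in the paper: $n=0$ holds because $\hcL_{0,0}(b)$ is strictly central by the bubble slide relations \cref{bubslide}, and $m=0$ follows from a bubble-slide computation showing that moving $\hcL_{0,1}(a)$ past a strand carrying a token $b$ produces an extra token $a\diamond b$, which, applied to each of the $n$ strands of $\hcL_{n,0}(b)$, yields $n\,\cL_{n,0}(a\diamond b)$. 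Without these cases the proof of the statement as given is incomplete.
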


\begin{proof}
    When $n=0$, \cref{VirHeis} follows immediately from the fact that $\hcL_{0,0}(b)$ is strictly central by \cref{bubslide}.  The bubble slide relations \cref{bubslide} also imply that
    \[
        \hcL_{0,1}(a)\
        \begin{tikzpicture}[centerbase]
            \draw[->] (0,-0.3) to (0,0.3);
            \token{west}{0,0}{b};
        \end{tikzpicture}
        = (-1)^{\bar{a} \bar{b}}\
        \begin{tikzpicture}[centerbase]
            \draw[->] (0,-0.3) to (0,0.3);
            \token{west}{0,0}{b};
        \end{tikzpicture}
        \hcL_{0,1}(a)
        +\
        \begin{tikzpicture}[centerbase]
            \draw[->] (0,-0.3) to (0,0.3);
            \token{west}{0,0}{a \diamond b};
        \end{tikzpicture}.
    \]
    Then \cref{VirHeis} follows for $m=0$, $n \ge 1$, by using the above relation to commute $\hcL_{0,1}(a)$ past each strand in $\hcL_{n,0}(b)$.

    It remains to consider the case $m,n \ge 1$.  In order to make the diagrams easier to read, we draw them in the case $m=4$, $n=3$; the general case is analogous.  In $\Tr(\Heis_k(A))$, we have
    \begin{equation} \label{VHdiag}
        \begin{tikzpicture}[anchorbase]
            \draw[->] (-0.7,-0.3) \braidto (-0.1,0.3) to (-0.1,0.5);
            \draw[->] (-0.5,-0.3) \braidto (-0.7,0.3) to (-0.7,0.5);
            \draw[->] (-0.3,-0.3) \braidto (-0.5,0.3) to (-0.5,0.5);
            \draw[->] (-0.1,-0.3) \braidto (-0.3,0.3) to (-0.3,0.5);
            \singdot{-0.5,0.3};
            \token{west}{-0.1,0.3}{a};
            \draw[->] (0.3,-0.3) \braidto (0.7,0.3) to (0.7,0.5);
            \draw[->] (0.5,-0.3) \braidto (0.3,0.3) to (0.3,0.5);
            \draw[->] (0.7,-0.3) \braidto (0.5,0.3) to (0.5,0.5);
            \token{west}{0.7,0.3}{b};
        \end{tikzpicture}
        =
        \begin{tikzpicture}[anchorbase]
            \draw[->] (0.1,-1.2) \braidto (-0.7,-0.3) \braidto (-0.1,0.3) \braidto (0.7,1.2);
            \draw[->] (0.3,-1.2) \braidto (-0.5,-0.3) \braidto (-0.7,0.3) \braidto (0.1,1.2);
            \draw[->] (0.5,-1.2) \braidto (-0.3,-0.3) \braidto (-0.5,0.3) \braidto (0.3,1.2);
            \draw[->] (0.7,-1.2) \braidto (-0.1,-0.3) \braidto (-0.3,0.3) \braidto (0.5,1.2);
            \singdot{-0.5,0.3};
            \token{west}{-0.1,0.3}{a};
            \draw[->] (-0.7,-1.2) \braidto (0.3,-0.3) \braidto (0.7,0.3) \braidto (-0.3,1.2);
            \draw[->] (-0.5,-1.2) \braidto (0.5,-0.3) \braidto (0.3,0.3) \braidto (-0.7,1.2);
            \draw[->] (-0.3,-1.2) \braidto (0.7,-0.3) \braidto (0.5,0.3) \braidto (-0.5,1.2);
            \token{west}{0.7,0.3}{b};
        \end{tikzpicture}
        \ ,
    \end{equation}
    where the dot is on an arbitrary strand in the $m$-cycle.  We now slide this dot towards the top of the diagram.  As it passes through each of the $n$ crossings, the relation \cref{upslides2} gives an extra term which is an $(m+n)$-cycle with no dots and a token $a \diamond b$.  Relation \cref{VirHeis} follows.
\end{proof}

Finally, note that it follows from \cref{jupiter} that $[L_{m,r}(a), L_{n,s}(b)]=0$ whenever $a \diamond b = 0$.  The diagrammatic analogue of this is the following.

\begin{prop}
    If $m,n \in \Z$, $r,s \in \N$, $a,b \in A$, and $a \diamond b = 0$, then
    \[
        [\cL_{m,r}(a), \cL_{n,s}(b)] = 0.
    \]
\end{prop}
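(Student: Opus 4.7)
The guiding principle throughout is that every term contributing to $[\cL_{m,r}(a), \cL_{n,s}(b)]$ arises from a diagrammatic move --- either a Demazure correction \cref{sif}, a bubble slide \cref{bubslide}, or a \cref{squish} expansion --- that introduces a teleporter $\sum_c c \otimes c^\vee$ linking the $a$-strand to the $b$-strand. Upon closing the diagram in the trace and using cyclic invariance of the cocenter, the teleporter and the two tokens collapse onto a common closed cycle to produce a factor $\sum_c (-1)^{\bar{b}\bar{c}} \cocenter{\dotsb acbc^\vee \dotsb} = \cocenter{(a \diamond b) \cdot \dotsb}$, which vanishes under the hypothesis $a \diamond b = 0$. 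The explicit commutator computations \cref{snow,VirHeis,fin} illustrate this principle: in each case the right-hand side is a multiple of an $\cL$-type or bubble-type element whose token is $a \diamond b$.

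The plan is to organize the argument by a case analysis on the signs of $m$ and $n$. If $m = n = 0$, then both $\hcL_{0,r}(a)$ and $\hcL_{0,s}(b)$ lie in $\End_{\Heis_k(A)}(\one) \cong \Sym(A)$, which is supercommutative by \cref{coconut}, so the commutator vanishes for free (without appeal to the hypothesis). If exactly one of $m, n$ is zero, say $n = 0$, I would first use \cref{rake,studded} to rewrite $\hcL_{0,s}(b)$ as a polynomial in bubbles $\ccbubble{b'}{t-k}$, and then commute each bubble past the strands of $\hcL_{m,r}(a)$ using the bubble slide relations \cref{bubslide}. Each slide contributes a correction featuring a teleporter between the $b$-bubble and an $a$-carrying strand; closing up in the trace and unfolding the teleporter yields a sum on a common closed cycle that collapses to a $\cocenter{a \diamond b}$-multiple, hence vanishes.

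For $m, n > 0$ (and dually $m, n < 0$, via the symmetry $\Omega_k$ of \cref{Omega}), the two orderings $\hcL_{m,r}(a) \otimes \hcL_{n,s}(b)$ and $\hcL_{n,s}(b) \otimes \hcL_{m,r}(a)$ in $\AWA_{m+n}(A) \cong \End_{\Heis_k(A)}(\uparrow^{\otimes (m+n)})$ are related by conjugation with an appropriate braid together with Demazure corrections coming from \cref{sif}. The conjugation is invisible in the cocenter, so the commutator reduces entirely to the Demazure corrections, each of which inserts a $\tau_i$-teleporter on adjacent strands; after sliding all tokens onto a common closed cycle via the teleportation properties \cref{teleport}, the configuration again evaluates to a multiple of $\cocenter{a \diamond b}$. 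For the mixed-sign case $m > 0 > n$ (and symmetrically $m < 0 < n$), I would expand $\hcL_{m,r}(a)\hcL_{n,s}(b)$ and its counterpart by resolving the up-down crossings with \cref{squish}: the ``identity'' contributions cancel in the commutator via cyclic conjugation, and the remaining ``bubble correction'' terms contain teleporters linking the $a$- and $b$-strands through cup/cap structures, which in the trace again reduce to $\cocenter{a \diamond b}$-multiples, exactly as in the detailed proof of \cref{fin}.

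The hard part will be the mixed-sign case, where the \cref{squish} expansion proliferates into many intermediate diagrams involving negatively-dotted bubbles \cref{fakel,faker}, and verifying that every such term factors through $a \diamond b$ requires careful bookkeeping of the sort seen in the proof of \cref{fin}. A cleaner conceptual alternative that I would attempt in parallel is to observe that the assignment $(\cocenter{a}, \cocenter{b}) \mapsto [\cL_{m,r}(a), \cL_{n,s}(b)]$ is $\kk$-bilinear in $C(A) \times C(A)$, and to show --- either by a universal teleporter-tracking argument in $\AWC$ and $\Heis_k(A)$, or by inspection of enough special cases \cref{snow,VirHeis,fin} --- that it factors through the symmetric bilinear operation $\diamond$. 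Under the hypothesis $a \diamond b = 0$, any such factorization gives the vanishing of the commutator immediately, without further diagrammatic calculation.
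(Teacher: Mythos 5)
Your proposal is correct and rests on the same key observation as the paper's proof: after relating the two orderings by conjugation (invisible in the trace), every correction term from sliding dots/crossings carries a teleporter joining the $a$- and $b$-cycles, hence collapses to a single cycle bearing a token $a \diamond b = 0$. Your case analysis by signs of $m,n$ and the appeal to \cref{squish}, \cref{bubslide} is just a more detailed spelling-out of what the paper compresses into its reference to \cref{VHdiag}.
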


\begin{proof}
    To compute this commutation relation, we begin as in \cref{VHdiag}, except that our diagrams can carry more dots, and our strands can be oriented in opposite directions.  Each time we slide a dot through a crossing, we pick up terms that are a single cycle with a token labeled $a \diamond b$, which is zero.  Thus, dots slide through strands, giving the desired relation.
\end{proof}

\bibliographystyle{alphaurl}
\bibliography{FHtrace}

\end{document}